\numberwithin{equation}{section}
\theoremstyle{plain}
\newtheorem{theorem}{Theorem}[section]
\newtheorem{lemma}[theorem]{Lemma}
\newtheorem{corollary}[theorem]{Corollary}
\theoremstyle{definition}
\newtheorem{definition}[theorem]{Definition}
\newtheorem{example}[theorem]{Example}
\newcommand{\N}{\mathbb{N}}
\newcommand{\R}{\mathbb{R}}
\newcommand{\lbd}{\lambda}
\newcommand{\Lbd}{\Lambda}
\newcommand{\ep}{\epsilon}
\def\ol{\overline}
\def\ul{\underline}
\def\tF{\tilde F}
\def\ul{\underline}
\def\tG{\tilde G}
\def\Q{\mathbb{Q}}
\def\bay{\begin{array}}
\def\eay{\end{array}}
\def\tld{\tilde}
\def\Lra{\Leftrightarrow}
\def\lng{\langle}
\def\rng{\rangle}
\def\E{\mathcal{E}}
\def\osc{\text{Osc}}
\def\1{\mathbf{1}}
\def\pmtx{\begin{pmatrix}}
\def\pmtrx{\end{pmatrix}}
\def\F{\mathcal{F}}
\def\A{\mathcal{A}}
\def\c{\mathfrak{c}}
\def\dlt{\delta}
\def\gma{\gamma}
\def\Gma{\Gamma}
\def\lbd{\lambda}
\def\lph{\alpha}
\def\cd{ \ \stackrel{(d)}{\rightarrow} \ }
\def\cp{ \ \stackrel{(p)}{\rightarrow} \ }
\def\nid{\noindent}
\def\tbf{\textbf}
\def\cdlg{c{\`a}dl{\`a}g}
\def\enumrom{\begin{enumerate}[noitemsep,label={(\roman*)}]}
\def\enumar{\begin{enumerate}[noitemsep,label={\arabic*.}]}
\def\enumalph{\begin{enumerate}[noitemsep,label={(\alph*)}]}
\def\itemgo{\begin{itemize}[noitemsep]}
\def\itemend{\end{itemize}}
\def\enumend{\end{enumerate}}
\def\cp{\stackrel{\text{p}}{\to}}
\def\cd{\stackrel{\text{d}}{\to}}
\def\lcd{\stackrel{\text{ld}}{\to}}
\def\ptl{\partial}
\def\th{\theta}
\def\env{\mathrm{env}}
\def\piv{\mathrm{piv}}
\def\sgn{\mathrm{sgn}}
\def\sub{\mathrm{sub}}
\def\cP{\mathcal{P}}
\def\dom{\mathrm{dom}}
\begin{document}
\date{}
\author{Eric Foxall}
\title{Limit processes and bifurcation theory of \\ quasi-diffusive perturbations}
\maketitle

\begin{abstract}
The bifurcation theory of ordinary differential equations (ODEs), and its application to deterministic population models, are by now well established. In this article, we begin to develop a complementary theory for diffusion-like perturbations of dynamical systems, with the goal of understanding the space and time scales of fluctuations near bifurcation points of the underlying deterministic system. To do so we describe the limit processes that arise in the vicinity of the bifurcation point. In the present article we focus on the one-dimensional case.
\end{abstract}

\small
% REQUIRED
%\begin{keywords}
\noindent\textbf{Keywords}: stochastic population model, individual-based model, stochastic chemical reaction network, density dependent Markov chain, diffusion limit, phase transition, bifurcation theory, chemical master equation.
%\end{keywords}

% REQUIRED
%\begin{AMS}
\noindent\textbf{MSC 2010:} 60F17, 60G99
%\end{AMS}
\normalsize

\section{Introduction}
A broad class of individual-based stochastic population models, under suitable mixing assumptions, can be interpreted as diffusion-like perturbations of an underlying smooth dynamical system. The general framework is known as density-dependent Markov chains \cite{ddmc}; examples include chemical reactions \cite{vk}, infection spread \cite{primer}, population genetics \cite{kimura} and evolutionary games \cite{popovic}. In each case, there is a system size parameter $N$ and functions $F$ and $G$ such that, letting $\ep=1/\sqrt{N}$, for small $\ep>0$, trajectories of the vector of population densities $x_\ep \in \R_+^d$ resemble solutions to a stochastic differential equation (SDE) of the form
\begin{align}\label{eq:QDP}
dx = F(x)\,dt + \epsilon\, \sqrt{G(x)}\,dB,
\end{align}
where $B$ is a $d$-dimensional standard Brownian motion and $\sqrt{G(x)}$ is the square root of the positive (semi)-definite matrix $G(x)$. The interpretation is made rigorous through limit theorems (see \cite{ddmc}) such as
\enumrom
\item \textbf{the law of large numbers}: letting $\phi_t$ denote the solution flow of $x'=F(x)$,
$$\text{if}  \ \ x_\ep(0) \to x_0  \ \ \text{as} \ \ \ep \to 0 \ \ \text{then for fixed} \ \ T>0,  \ \ \sup_{t \le T}|x_\ep(t)-\phi_t(x_0)| \cp 0 \ \ \text{as} \ \ \ep \to 0.$$
\item \textbf{the central limit theorem}: letting $Y_\ep(t)= \ep^{-1}(x_\ep(t)-\phi_t(x_0))$ and $(Y(t))$ denote the solution to the initial-value problem
$$Y(0) = Y_0 \ \ \text{and} \ \ dY = DF(\phi_t(x_0))Ydt + \sqrt{G(\phi_t(x_0))}dB,$$
if $x_\ep(0) \to x_0$ and $Y_\ep(0) \to Y_0$ as $\ep \to 0$ then $Y_\ep \cd  Y$ as $\ep \to 0$.
\enumend
Briefly, on the natural time scale and on the population density scale, such processes resemble solutions to the deterministic system $x'=F(x)$, with random fluctuations of size $O(\ep)$. When $F$ and $G$ are non-degenerate, this description gives an accurate sense of the typical behaviour of sample paths. For example, if $x_\star$ is a linearly stable equilibrium point of the deterministic system $x'=F(x)$, $G(x_\star) \ne 0$ and $x_\ep(0)=x_\star+O(\ep)$, then fluctuations in $x_\ep(t)-x_\star$ of size $\ep$ are observed on the natural time scale, and larger than $O(\ep)$ fluctuations occur only as the result of brief, rare excursions, as described by the theory of moderate to large deviations (see for example \cite{fw}).\\

On the other hand, when either $F$ or $G$ is degenerate the description given by (i)-(ii) above becomes uninformative. For example, if $x_\star$ is stable but non-hyperbolic in the sense that $DF(x_\star)$ is non-invertible, then as we will see, larger than $O(\ep)$ fluctuations are observed on a longer time scale, while if $G(x_\star)=0$, fluctuations can still be non-zero, and can be large or small depending on $F$. This has already been observed in particular cases, such as the SIS \cite{crit-scale} and SIR \cite{martinlof} models of infection spread. In both references, $\alpha_1,\alpha_2,\alpha_3>0$ are found such that if $x_\ep(0)=x_\star + \ep^{\alpha_1}$ and $\lbd=\lbd_\star + \ep^{\alpha_2}$, then $\ep^{-\alpha_1}(x_\ep(\ep^{-\alpha_3}t)-x_\star)$ converges in distribution as $\ep \to 0$ to a diffusion. Our aim is to develop a sufficiently general theory that we can accurately describe all limits of this kind, subject only to existence of a Taylor expansion for $F$ and $G$ near bifurcation points of the deterministic approximation.\\

We refer to the models under consideration as \emph{quasi-diffusive perturbations} or QDPs; a precise definition is given in Section \ref{sec:def}. In this article, we develop the theory of limit processes for QDPs, including both degenerate and parametrized systems. This enables us to accurately describe the spatial (i.e., population density) and temporal scales of fluctuations of $(x_\ep)$ in a neighbourhood of bifurcation points $(x_\star,\lbd_\star)$ of the deterministic approximation  $x'=F(x,\lbd)$, as a function of the Taylor expansion of $F$ and $G$.\\

As a result of this theory we obtain enhanced versions of the usual deterministic bifurcation diagrams, since they also account for the effect of the stochastic terms. Because of the added complexity of this endeavour, here we mostly restrict our attention to the one-dimensional case, i.e., $x \in \R$. It should also be noted that, although it deals with stochastic processes and treats the same types of bifurcations, our theory bears no direct resemblance to the stochastic bifurcation theory of random dynamical systems \cite{rds}. In the next section we describe our approach and the main results, and give an overview of the rest of the article.

\section{Overview and main results}

In this section we give an informal overview of our approach and main results. Precise statements are deferred to the section in which they appear, as a good deal of exposition is required to state them.\\

\nid\tbf{Assumption: strong stochasticity. }We begin by noting an important assumption of the theory, which is that locally, $F=O(G)$, i.e., $|F|$ is uniformly bounded by a multiple of $|G|$ in a neighbourhood of $(x_\star,\lbd_\star)$, that we refer to as \emph{strong stochasticity}. This ensures that diffusion tends to dominate at small scales, i.e., small values of $x-x_\star$, while drift dominates at large scales. All density-dependent Markov chains are strongly stochastic, a fact that is not hard to show but is deferred to a later work.\\

\nid\tbf{Approach. }Given a point $(x_\star,\lbd_\star)$, our approach is to consider all rescalings of the form
\begin{align}\label{eq:Yresc}
Y_\ep(t) = a_\ep (x_\ep(b_\ep t; \lbd_\ep)-x_\star)
\end{align}
with $a_\ep \to\infty$, and $\lbd_\ep \to \lbd_\star$ as $\ep \to 0$, then to find all choices of $(a_\ep),(b_\ep),(\lbd_\ep)$ for which $(Y_\ep)$ converges to the solution of a non-trivial ordinary or stochastic differential equation (ODE/SDE). We refer to such sequences as \emph{limit scales} for the family $(x_\ep)$. We consider not only the case where $x_\star$ is a constant but also the case where $x_\star=x_\star(\lbd)$ is a non-constant equilibrium branch of $F$, i.e., is such that $F(x_\star(\lbd),\lbd))=0$. To fix ideas, we begin with the former case.\\

When $(a_\ep),(b_\ep),(\lbd_\ep)$ is a limit scale, the limiting equation for $(Y_\ep)$ has the form
\begin{align}\label{eq:lim}
dY = \tF(Y)dt + \tG(Y)dB
\end{align}
with
\begin{align}\label{eq:limproc}
\tF(x) &:= \lim_{\ep \to 0}a_\ep b_\ep F(x_\star + x/a_\ep,\lbd_\ep) \ \text{and} \nonumber \\
\tG(x) &:= \lim_{\ep \to 0}\ep^2 a_\ep^2 b_\ep G(x_\star + x/a_\ep,\lbd_\ep).
\end{align}
The multipliers $a_\ep b_\ep$ and $\ep^2 a_\ep^2 b_\ep$ arise from the fact that drift scales linearly in space while diffusion scales quadratically, and both scale linearly in time.\\

To simplify the discussion, assume $(x_\star,\lbd_\star)=(0,0)$ which can be achieved by translation, and focus on the rectangle $(x,\lbd)\in[0,1]^2$ in the first quadrant; behaviour on other quadrants follows in the same way.\\

\nid\tbf{Requirements for a limit scale. } $(a_\ep),(b_\ep),(\lbd_\ep)$ is a limit scale iff 
\enumrom
\item \tbf{shape:} The shape of $F(\cdot/a_\ep,\lbd_\ep)$ and $G(\cdot/a_\ep,\lbd_\ep)$ is stable as $\ep \to 0$, i.e., there are sequences $(f_\ep),(g_\ep)$ such that as $\ep \to 0$, we have locally uniform convergence in $x$ of the functions
$$\frac{1}{f_\ep}\,F(x/a_\ep,\lbd_\ep)\quad \text{and} \quad \frac{1}{g_\ep}\, G(x/a_\ep,\lbd_\ep),$$
\item \tbf{ratio:} the rescaled drift to diffusion ratio
\begin{align}\label{eq:rat}
\frac{F(x/a_\ep,\lbd_\ep)}{\ep^2 a_\ep G(x/a_\ep,\lbd_\ep)}
\end{align}
converges either to $0$, a non-trivial function of $x$, or $\infty$, and\\
\item \tbf{time scale:} $(b_\ep)$ is chosen just large enough that one or both of $\tF$, $\tG$ is not identically zero.
\enumend

For sequences $(c_n),(d_n)$, say that $c_n \ll d_n$, $c_n \asymp d_n$ or $c_n \gg d_n$ if $\lim_{n\to\infty} c_n/d_n$ exists and is equal to $0$, a number in $(0,\infty)$, or $\infty$, respectively. Say that the ratio of $(c_n)$ to $(d_n)$ is stable if one of $c_n\ll d_n$, $c_n\asymp d_n$ or $c_n\gg d_n$ holds. Say that $(c_n)$ is stable if the ratio of $(c_n)$ to 1 is stable, i.e., if $\lim_{n\to\infty}c_n$ exists in $[0,\infty]$.\\

\nid\tbf{Resolution of requirements. }Together, (i)-(ii) determine $(a_\ep,\lbd_\ep)$, while (iii) implies that $(b_\ep)$ is uniquely determined, modulo $\asymp$, from $(a_\ep,\lbd_\ep)$. (i)-(ii) are resolved as follows.

\enumrom
\item As explained in Section \ref{sec:domterms}, there are finite sets $M_F,M_G \subset (0,\infty)\cap \Q$ such that on each quadrant of $\R^2$, the shape of $F(\cdot/a_\ep,\lbd_\ep)$ (respectively, $G(\cdot/a_\ep,\lbd_\ep$)) is stable as $\ep \to 0$ iff the ratio of $1/a_\ep$ to $\lbd_\ep^m$ is stable for every $m\in M_F$ ($m\in M_G$). If $(1/a_\ep,\lbd_\ep)$ are viewed as points in the $(x,\lbd)$ plane, regions of stability correspond to conditions such as, for example, $\lbd \ll  x \ll \sqrt{\lbd}$, or $x\asymp \lbd$.\\

\item In Section \ref{sec:const-equil}, \eqref{eq:dd-fcn} we define a drift-diffusion ratio function $r(x,\lbd)$ that is easy to write down and morally satisfies $r(x,\lbd) \asymp |xF(x,\lbd)|/|G(x,\lbd)|$ as $(x,\lbd)\to (0,0)$. More precisely, on regions of the form $\lbd^{m} \le x \le \lbd^{m'}$ for consecutive $m,m'\in M_F\cup M_G$, where $F(x,\lbd)\approx x^{\lph_1}\lbd^{\lph_2}$ and $G(x,\lbd)\approx x^{\beta_1}\lbd^{\beta_2}$ for some $\lph,\beta \in \N^2$, $r(x,\lbd)$ is defined as $x^{1+\lph_1-\beta_1}\lbd^{\lph_2-\beta_2}$.
\enumend
One of our main results, expressed in various contexts by Theorems \ref{thm:iso-limits}, \ref{thm:prmtrzd-limits} and \ref{thm:eq-branch}, is that if $(b_\ep)$ is chosen correctly relative to $(a_\ep)$ and $(\lbd_\ep)$ then \eqref{eq:limproc} holds for some
\enumalph
\item $\tF=0$ and $\tG\ne 0$, if the shape of $F(\cdot/a_\ep,\lbd_\ep)$ is stable and $r(1/a_\ep,\lbd_\ep)\ll \ep^2$,
\item $\tF\ne 0$ and $\tG\ne 0$, if the shape of $F(\cdot/a_\ep,\lbd_\ep)$ and $G(\cdot/a_\ep,\lbd_\ep)$ is stable and $r(1/a_\ep,\lbd_\ep) \asymp \ep^2$,
\item $\tF \ne 0$ and $\tG=0$, if the shape of $G(\cdot/a_\ep,\lbd_\ep)$ is stable and $r(1/a_\ep,\lbd_\ep) \gg \ep^2$.
\enumend
Cases (a) and (c) are respectively called the \emph{pure diffusive range} and the \emph{deterministic range}, while the intermediate region, case (b), is called the \emph{drift-diffusion scale}, or dd scale for short.\\

\nid\tbf{Summary of limit scales. }To summarize, $(a_\ep),(b_\ep),(\lbd_\ep)$ is a limit scale if 
\itemgo
\item the ratio of $r(1/a_\ep,\lbd_\ep)$ to $\ep^2$ is stable,
\item the ratio of $1/a_\ep$ to $\lbd_\ep^m$ is stable for each relevant $m$, i.e.,
\enumalph
\item for every $m\in M_F$ if $\lim_{\ep \to 0}\ep^{-2} r(1/a_\ep,\lbd_\ep)>0$ and
\item for every $m\in M_G$ if $\lim_{\ep \to 0}\ep^{-2} r(1/a_\ep,\lbd_\ep)<\infty$, and
\enumend
\item $(b_\ep)$ is chosen correctly relative to $(a_\ep)$ and $(\lbd_\ep)$.
\itemend
\nid\tbf{Partition into limit classes. }The above characterization partitions limit scales according to whether each of the relevant ratios (those of $r$ to $\ep^2$ and of $1/a_\ep$ to $\lbd_\ep^m$) is $\ll 1$, $\asymp 1$ or $\gg 1$. Moreover, the partition elements, that we call \emph{limit classes}, are also the equivalence classes for the relation $(\tF_1,\tG_1)\sim (\tF_2,\tG_2)$ if $\tF_1=c_1\tF_2$ and $\tG_1=c_2\tG_2$ for positive constants $c_1,c_2$. This is easily verified from the facts that
\itemgo
\item $\tF=0$ iff $r(1/a_\ep,\lbd_\ep) \ll \ep^2$ and $\tG=0$ iff $r(1/a_\ep,\lbd_\ep) \gg \ep^2$, and
\item from the above iff statement relating stable shape of $F,G$ to stable ratios.
\itemend
\nid\tbf{Graphical depiction of limit classes. }We can depict the above partition nicely in the $(x,\lbd)$ plane, by replacing $\ll,\asymp,\gg$ with $\le,=,\ge$. First, define the \emph{transition curves} and the \emph{drift-diffusion curve} as follows:
\itemgo
\item For $m\in(0,\infty)$ let $\Gma_m=\{(x,\lbd)\colon x=\lbd^m\}$. The transition curves are then $\{\Gma_m \colon m\in M_F\cup M_G\}$.
\item The drift-diffusion curve is the sequence of curves $(\Phi_\ep)_\ep$ defined by
$$\Phi_\ep = \{(x,\lbd)\colon r(x,\lbd)=\ep^2\}.$$
\itemend

To depict the partition, draw the following lines.
\enumar
\item Draw the dd curve.
\item For each $m\in M_F$, add $\Gma_m \cap \{(x,\lbd)\colon r(x,\lbd)>1\}$.
\item Then, for each $m\in M_G$, add $\Gma_m \cap \{(x,\lbd)\colon r(x,\lbd)<1\}$.
\enumend
The result (for each $\ep$) is a connected union of curves $\Psi_\ep$, consisting of the dd curve with some or all of each transition curve branching off from it. For each $\ep$, define a partition of $[0,1]^2$ by taking as its elements
\enumar
\item the branch points, i.e., each singleton set $p(m):=\Gma_m \cap \Phi_\ep$, for $m\in M_F\cup M_G$,
\item the edges, i.e., each connected component of $\Psi_\ep \setminus \bigcup_m p(m)$, and
\item the regions, i.e., each connected component of $[0,1]^2 \setminus \Psi_\ep$.
\enumend
The set of partition elements is then in $1:1$ correspondence with the limit classes, restricted to $(1/a_\ep,\lbd_\ep)\in[0,1]^2$, in the manner described above. An example is shown in Figure \ref{fig:classex} for $F(x,\lbd)=\lbd x-x^2$ and $G(x,\lbd)=x$, corresponding to a transcritical bifurcation. The dd curve cuts transversely across the transition curve, which in this case coincides with the positive equilibrium branch of $F$, and separates the pure diffusive range from the deterministic range, and as long as $F=O(G)$, the pure diffusive range is on the side nearest to $(0,0)$. In the above example the dd curve is the graph of a family of functions $\lbd\mapsto\phi_\ep(\lbd)$; as described in Theorem \ref{thm:dd-curve}, in some cases it can develop folds, which is shown in Figure \ref{fig:uprex2}.\\

\begin{figure}
\center
\includegraphics[width=2.5in]{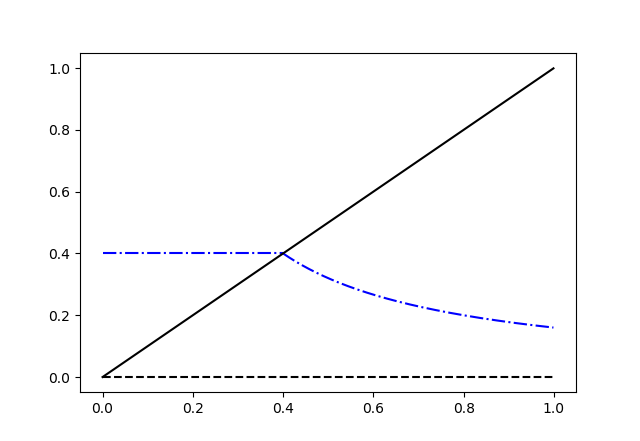}
\includegraphics[width=2.5in]{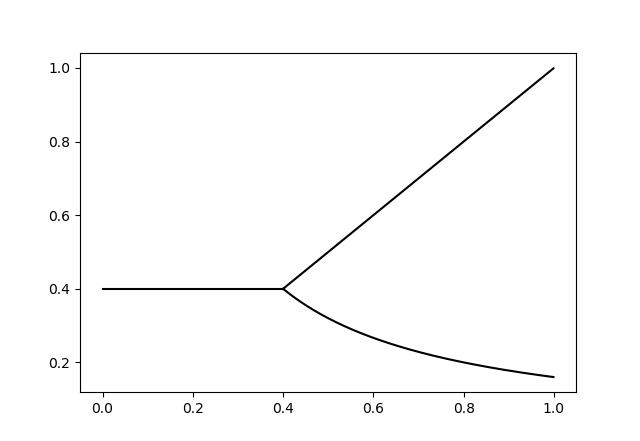}
\caption{\emph{Left:} Plot of equilibria (black) with transition curve in solid black, and drift-diffusion curve (blue) with $\ep=0.4$ on $[0,1]^2$ with $\lbd$ on horizontal axis, for the case $F(x,\lbd)=\lbd x-x^2$ and $G(x)=x$. \emph{Right:} the union of curves $\Phi_\ep$, for the same $F,G$ and $\epsilon$.}
\label{fig:classex}
\end{figure}

\nid\tbf{Equilibrium branch. }In the example of Figure \ref{fig:classex}, it turns out there are fluctuations around the non-constant equilibrium branch that aren't captured by the above analysis, as the latter takes $x=0$ as its point of reference. To capture these, we replace $x_\star$ with $x_\star(\lbd_\ep)$ in \eqref{eq:Yresc} and identify the limit scales, using the same method as above. For simplicity we assume that $F$ has a simple equilibrium branch, i.e., there is a function $x_\star(\lbd)$ such that $F(x_\star(\lbd),\lbd)=0$ and $x_\star(\lbd)$ is a simple root of $F(\cdot,\lbd)$ for each $\lbd$, which allows us to work with the linearization of $F$. The approximation is useful when $\lbd \gg \lbd_\star(\ep)$, the intersection point of the dd curve $\Phi_\ep$ with the equilibrium branch, which is also the range of $\lbd$ values for which the analysis around $x=0$ loses information about fluctuations around $x_\star(\lbd)$. As in the previous analysis, there is a pure diffusive range centered around $x_\star(\lbd)$ and a deterministic range further out, separated by a drift-diffusion scale whose width we denote by $\phi_\ep^\star(\lbd)$. A hybrid plot, showing the dd curves around both $0$ and $x_\star(\lbd)$, is given in Figure \ref{fig:transcrit}. This is complemented by numerical simulations of the logistic Markov chain
\begin{align}\label{eq:logisMC}
X \to \begin{cases} X+1 & \text{at rate} \ (1+\lbd)X, \\
X-1 & \text{at rate} \ X + X^2/N.\end{cases}\end{align}
to demonstrate the limits observed across the diagram. For DDMCs \cite{ddmc}, the functions $F,G$ are given by
\begin{align*}
F(x)=\sum_\Delta \Delta q_\Delta(x) \ \ \text{and} \ \ G(x)=\sum_\Delta \Delta \Delta^{\top}q_\Delta(x)
\end{align*}
where $q_\Delta(x)=\lim_{N\to\infty} q_N(\lfloor Nx \rfloor ,\lfloor Nx\rfloor+\Delta)/N$ and $q_N(X,Y)$ is the transition rate from $X$ to $Y$ in the Markov chain, for a given value of $N$. The Markov chain \eqref{eq:logisMC} has $q_1(x)=(1+\lbd)x$ and $q_{-1}(x) = x + x^2$ so $F(x)=\lbd x - x^2$ and $G(x)=(2+\lbd)x + x^2$ which is asymptotic to $2x$ as $(x,\lbd)\to (0,0)$ and thus compatible with Figure \ref{fig:transcrit}, modulo $\asymp$.\\

\begin{figure}
\begin{center}
\includegraphics[width=3.5in]{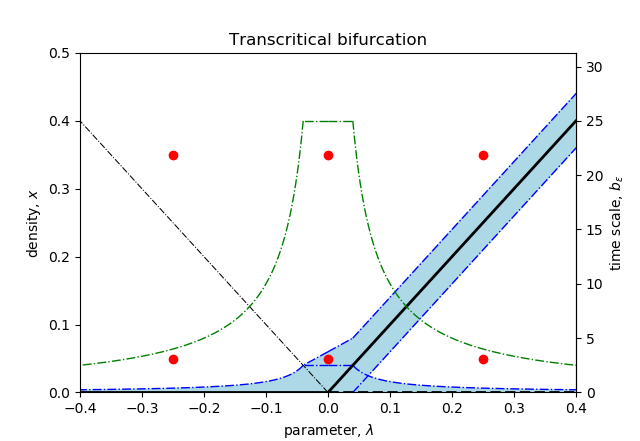}\\
\end{center}
\caption{Bifurcation diagram for $F(x)=\lbd x- x^2$ and $G(x)=x$ including dd time scale, depicted with $\ep=0.04$. Equilibria (thick) and non-equilibrium transition curve (thin) outlined in black. Drift-diffusion curves $\phi_\ep$ and $x_\star \pm \phi_\ep^\star$ in blue, with pure diffusive regions shaded in blue. Time scale at dd curve in green (it is the same for both curves in this example) with right-hand axis indicating values. Red dots denote values of $\lbd$ and $x_\ep(0)$ used in Figure \ref{fig:sampaths}.}
\label{fig:transcrit}
\end{figure}

\begin{figure}
\centering
\includegraphics[width=4in]{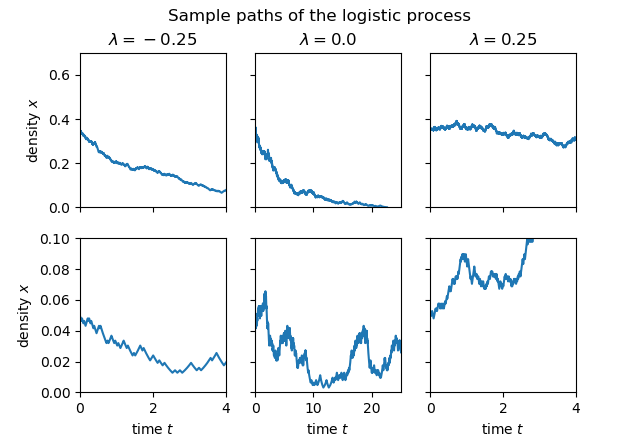}
\caption{Sample paths of the logistic Markov chain \eqref{eq:logisMC}, which corresponds to the example from Figure \ref{fig:transcrit}. At each value of $\lbd$, time window is set to $b_\ep(\lbd)$, the time scale of fluctuations at the dd scale for that value of $\lbd$.}
\label{fig:sampaths}
\end{figure}

\nid\tbf{Bifurcations. }Combining these analyses, we study three types of bifurcations in one dimension: saddle-node, transcritical and pitchfork. We find it is convenient to focus on the dd space and time scales. As long as the dd curve does not fold these can be described using functions of $\lbd$, respectively $\phi_\ep$ and $b_\ep$, around $x=0$ and $\phi_\ep^\star$, $b_\ep^\star$ around $x_\star(\lbd)$. The details are given in Section \ref{sec:bifurc}; some general findings are that (i) $\phi_\ep$ is constant for $\lbd$ near $0$, and may increase or decrease as $\lbd$ increases, and (ii) $b_\ep \gg 1$ for $\lbd$ near $0$, corresponding to slow fluctuations, and decreases (though not always monotonically) to $1$ as $\lbd \uparrow 1$ when $F(0,\lbd)=0$ for $\lbd$ near $0$ as in the transcritical and pitchfork case, and to $0$ when $F(0,\lbd)\ne 0$ for $\lbd \ne 0$, as in the saddle-node case. $b_\ep^\star$ is similar but always decreases to $1$, which relies on the assumption that $F(x_\star(\lbd),\lbd)=0$.\\

\nid\tbf{Scope of the article and later work.} For simplicity, in this article we only study bifurcations in one dimension, and we define QDPs in such a way that the required error estimates for convergence to a limit are satisfied. This suggests two directions for generalization. The first is to study other bifurcations; for example, the Hopf bifurcation can likely be tackled by combining the methods of this article with an averaging result. The second is to make the results applicable to density-dependent Markov chains (DDMCs). It appears that DDMCs whose transition rates have chemical mass-action form (see for example Chapter VII, Section 2 in \cite{vk}; briefly, the reaction rate is proportional to the product of the concentrations of the reactants, or to the same with combinatorial corrections), which also includes most well-mixed population and infection spread models, naturally satisfy the required error estimates, at least at the drift-diffusion scale, so I intend to discuss this in a companion paper. Towards this goal, in the present article we take care to specify precise conditions for convergence, formulated in the language of semimartingales. The reader who is uninterested in these technical details can effectively skip Section \ref{sec:def} and assume that \eqref{eq:QDP} is meant literally, i.e., that $(x_\ep)$ is a family of diffusions with small noise parameter.\\

\nid\tbf{Layout. }The article is organized as follows. In Section \ref{sec:def} we define precisely what it means for a family of processes $(x_\ep)$ to resemble solutions to \eqref{eq:QDP}, and give conditions for \eqref{eq:Yresc} to converge to a diffusion limit. In Section \ref{sec:iso} we treat the unparametrized case, identifying limit scales and computing limit processes. This simpler case acts as a warm-up for Section \ref{sec:prmtzd}. In Section \ref{sec:prmtzd} we treat the parametrized case, establishing the results described in this section. A non-trivial effort is required in Sections \ref{sec:env}-\ref{sec:domterms} to understand the anatomy of bivariate Taylor expansions. In Section \ref{sec:bifurc} we treat the three types of bifurcations mentioned above. We obtain formulae for the space and time scale of fluctuations, paying special attention to the drift-diffusion scale. Since, corresponding to a given $F$ there are potentially several $G$ such that $F=O(G)$, I'll finish this section by arguing that a particular choice of $G$ is generic and show that in this case, the bifurcation diagrams are particularly easy to describe.

\section{Definitions and basic limit theorems}\label{sec:def}

To understand the suitable class of processes, we work backwards from the desired limit processes, which are diffusions, i.e., solutions of a martingale problem associated to a stochastic differential equation. Once we have formulated the martingale problem in suitable generality to allow for explosion, we move to quasi-diffusions, which are sequences of semimartingales that converge to diffusions. Finally we define quasi-diffusive perturbations, which are generalizations of \eqref{eq:QDP} satisfying sufficiently strong estimates that, upon a rescaling of the form \eqref{eq:Yresc}, will be quasi-diffusions if the functions describing drift and diffusion (that we refer to as characteristics) converge.

\subsection{Diffusions}
Our definition of diffusion will be via the martingale problem, which is arguably the most flexible approach. For the sake of the unacquainted reader, we take a short detour to arrive at that formulation. The classical definition of a diffusion is a strong Markov process with continuous sample paths. Underlying this definition is the idea that a diffusion solves a stochastic differential equation (SDE), such as an initial value problem of the form
\begin{align}\label{eq:SDE}
X(0) = x, \quad dX = F(X)dt + \sigma(X)dB,
\end{align}
where $B$ is a $d$-dimensional standard Brownian motion, $F:\R^d\to\R^d$ is a vector field, and $\sigma:\R^d \to M_d(\R)$ is a $d\times d$ matrix-valued function. The formal interpretation of \eqref{eq:SDE} is via the corresponding integral equation
\begin{align}\label{eq:intSDE}
X(t) = x + \int_0^t F(X(s))ds + \int_0^t \sigma(X(s))dB(s),
\end{align}
which leads to the notion of a strong solution: a \emph{strong solution} of \eqref{eq:intSDE} is an $\R^d$-valued process $X$, defined on the filtered probability space $(\Omega,\F,P)$ of a $d$-dimensional Brownian motion $B$, where $\F=(\F(t))$ is the completion of the natural filtration of $B$, such that $X$ is adapted to $\F$ and \eqref{eq:intSDE} is satisfied $P$-almost surely, for all $t\ge 0$. The spirit of this definition is that one begins with a Brownian motion, and then constructs the solution $X$ directly from $B$.\\

A related notion is that of a \emph{weak solution}, which is a probability space $(\Omega,\F,P)$ and a pair of $\F$-adapted and continuous processes $X$ and $B$, such that $B$ is a $d$-dimensional standard $\F$-Brownian motion (see Chapter 5, Section 1 of \cite{ethktz} for a precise definition of $\F$-Brownian motion), and \eqref{eq:intSDE} is satisfied $P$-almost surely for all $t\ge 0$. The spirit of this definition is that both $X$ and $B$ are constructed upon a probability space $(\Omega,\F,P)$ which may be freely chosen.\\

The raison d'{\^e}tre of the weak solution notion is that in order to find solve \eqref{eq:intSDE}, it should not be strictly necessary to start from a Brownian motion. Taking this a step further, if there is a characterization of $X$ in a weak solution to \eqref{eq:intSDE}, then we can forget about $B$ and focus on the distribution of $X$. The desired characterization is called the martingale problem. Define $G=\sigma \sigma^{\top}$ and the operator
\begin{align}\label{eq:mg-op}
Lf = \frac{1}{2}\sum_{i,j=1}^d G_{ij} \frac{\ptl^2}{\ptl x_i \ptl x_j} f + \sum_i F_i \, \frac{\ptl}{\ptl x_i} f
\end{align}
on $C^2$ functions $f:\R^d\to\R$. A solution to the martingale problem for \eqref{eq:SDE} is a probability measure on the space $C([0,\infty),\R^d)$ such that for all $f \in C_0^2(\R^d)$, the corresponding random variable $X$ satisfies $X(0)=x$ and has the property that
$$f(X(t)) - \int_0^t Lf(X(s))ds$$
is a martingale with respect to the natural filtration $\F(t) := \sigma(\{X(s) \colon s\le t\}$ of $X$. Using the fact that $[B](t) = t$, together with some basic properties of the stochastic integral as well as It{\^o}'s equation (Chapter 1, Theorem 4.57 in \cite{jacod}), it is not hard to show that the distribution of $X$ in a weak solution of \eqref{eq:intSDE} solves the martingale problem. It is also possible to construct a weak solution from a solution to the martingale problem (Chapter 5, Theorem 3.3 in \cite{ethktz}). From now on we shall focus on the martingale problem, so we won't further discuss these connections rigorously.\\

Since population values are typically non-negative, it will be useful to formulate a version of the martingale problem that allows both the domain of $F,G$, and the state space of the process, to be restricted to an open, connected set $U\subset \R^d$. Fortunately, this has already been mostly treated; we record the definitions and the existence and uniqueness results below, following $\S$1.12-1.13 of \cite{pinsky}. Let $M_+(\R,d)$ denote the set of $d\times d$ positive semidefinite matrices with values in $\R$. For $A\subset \R^d$ let $\hat A$ denote the one-point compactification of $A$, and let $\c$ denote the one point such that $\hat A = A \cup \{\c\}$. If $A=\R^d$ then the resulting topology on $\hat A=\hat \R^d$ is generated by the metric $\rho$ given by identifying $\hat \R^d$ with the sphere $S^d$ and using the standard Riemannian metric $\rho$ on $S^d$. If $A\ne \R^d$ we can equip $\hat A$ with the metric $\rho_A$ defined by
$$\rho_A(x,y) = \begin{cases}\inf_{z \in \partial A \cup \{\c\}} \rho(x,z) \ \text{if} \ y=\c,\\
\min(\rho(x,y),\rho_A(x,\c) + \rho_A(y,\c)) \ \text{if} \ x,y \ne \c.\end{cases}$$
The resulting topology on $\hat A$ is such that $x_n \to_{\rho_A} x \in \hat A$ if (i) $x \in A$ and $d(x_n,x) \to 0$ or (ii) $x = \c$ and one of (a) $|x_n|\to\infty$ or (b) $d(x_n,\ptl A) \to 0$ holds, where $d$ is the Euclidean distance. Using $\rho_A$ to define a compatible metric in the manner of equation (8.1) in Ch.~1, \cite{pinsky}, the space $C([0,\infty),\hat A)$, with the topology of uniform $\rho_A$-convergence on bounded intervals, is shown to be a Polish space. Below, a \emph{domain} refers to an open and connected set, and $A\subset\subset B$ if $A$ is bounded and $\ol A\subset B$ where $\ol A$ denotes the closure of $A$.

\begin{definition}[Martingale problem and generalized martingale problem, Chapter 1, Sections 12-13 \cite{pinsky}]\label{def:mp}
Let $U\subset \R^d$ be open and connected and let $F:U \to \R^d$ and $G:U\to M_+(\R,d)$ be measurable. For $X \in C([0,\infty),\hat U)$ and $V\subset U$ define $\tau(V) = \inf\{t \colon X(t) \notin V\}$, and let
$$\hat\Omega_U = \{X \in C([0,\infty),\hat U)\colon \tau(U)=\infty \ \text{or} \ X(\tau(U)+t)=\c \ \text{for all} \ t>0\}.$$
Define the filtration $\hat\F_U(t)$ on $\hat \Omega_U$ by $\hat \F_U(t) = \sigma(X(s), 0 \le s \le t)$ and let $\hat \F_U=\hat \F_U(\infty)$.\\

Let $(P_x)_{x \in \hat U}$ on $(\hat \Omega_U,\hat \F_U)$ be a family of probability measures with $P_x(X(0)=x)=1$ for each $x$. $(P_x)$ solves the generalized martingale problem for $F,G$ if there is a sequence of domains $(D_n)$, with $D_n \subset\subset D_{n+1}$ for each $n$ and $\bigcup_n D_n=U$, such that for each $\smash x \in \hat U$, $f \in C^2(U)$ and $n>0$,
\begin{align}\label{eq:mp-mg}
f(X(t\wedge \tau(D_n)) - \int_0^{t \wedge \tau(D_n)}(Lf)(X(s))ds\end{align}
is an $\hat\F_U$-martingale with respect to $P_x$. $(P_x)$ solves the martingale problem for $F,G$ if $P_x(\tau(U)=\infty)=1$ for each $x \in U$, and for each $f\in C_0^2(U)$, \eqref{eq:mp-mg} is a martingale with $t$ in place of $t \wedge \tau(D_n)$.
\end{definition}

\begin{lemma}[\cite{pinsky}]\label{lem:SDExist}
Suppose $F:U\to \R^d$ and $G:U\to M_+(\R,d)$ are bounded on compact $K\subset U$, and that $G$ is continuous and invertible on $U$. Then there is a unique solution $(P_x)_{x \in \hat U}$ to the generalized martingale problem for $F,G$. Moreover, $(P_x)_{x \in U}$ has the Feller property and $(P_x)_{x \in \hat U}$ has the strong Markov property. If, in addition, $F$ and $G$ are bounded on bounded subsets of $U$, then for each $x \in U$, $P_x$-a.s., if $\tau(U)<\infty$ then as $t \to \tau(U)^-$ either $|X(t)| \to \infty$ or $d(X(t),z)\to 0$ for some $z \in \ptl U$, where $d$ is Euclidean distance.
\end{lemma}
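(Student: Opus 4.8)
The statement bundles together facts that are, in substance, Theorem~1.13.1 and the surrounding discussion in $\S 1.12$--$1.13$ of \cite{pinsky}, so the plan is to check that the present hypotheses meet theirs and to indicate how the pieces fit, dwelling only on the points that need care. The unifying device is localization: fix an exhaustion $D_n \subset\subset D_{n+1}$ with $\bigcup_n D_n = U$ (such a sequence exists for any open $U$) and reduce everything to the classical well-posed martingale problem on $\R^d$.

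For existence and uniqueness of the generalized martingale problem, I would, for each $n$, pick a continuous cutoff $\chi_n$ with $0\le\chi_n\le 1$, $\chi_n\equiv 1$ on $\ol{D_n}$ and $\chi_n\equiv 0$ off $D_{n+1}$, set $c_n := \max(1,\sup_{D_{n+1}}\|G\|)$, and define on all of $\R^d$ the coefficients $\tilde F_n := \chi_n F$ and $\tilde G_n := \chi_n G + (1-\chi_n)\,c_n I$. Because $\ol{D_{n+1}}$ is a compact subset of $U$ on which, by hypothesis, $F,G$ are bounded and $G$ is continuous and invertible, $\tilde F_n$ is bounded measurable while $\tilde G_n$ is bounded, continuous, and uniformly elliptic on $\R^d$ (its least eigenvalue is at least $\min(c_n,\inf_{\ol{D_{n+1}}}\lambda_{\min}(G))>0$, the infimum being positive since $\lambda_{\min}(G)$ is continuous and positive on the compact set $\ol{D_{n+1}}$). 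By the classical theorem for the martingale problem with continuous bounded uniformly elliptic diffusion matrix and bounded measurable drift (Stroock--Varadhan), there is a unique, strong Markov, Feller solution $(\tilde P^n_x)_{x\in\R^d}$ to the martingale problem for $(\tilde F_n,\tilde G_n)$. Since $\tilde F_m=\tilde F_n=F$ and $\tilde G_m=\tilde G_n=G$ on $D_n$ for every $m\ge n$, a standard localization/restriction lemma shows these solutions are consistent: $\tilde P^m_x$ stopped at $\tau(D_n)$ equals $\tilde P^n_x$ stopped at $\tau(D_n)$. One then defines $P_x$ on $C([0,\infty),\hat U)$ as the unique law agreeing with $\tilde P^n_x$ up to $\tau(D_n)$ for every $n$ and sending the path to $\c$ at all times $\ge \tau(U)=\lim_n\tau(D_n)$ when this limit is finite. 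By construction $P_x$ is supported on $\hat\Omega_U$, and because it coincides with $\tilde P^n_x$ up to $\tau(D_n)$, which solves the martingale problem for $(F,G)$ on $D_n$, the process \eqref{eq:mp-mg} is a $P_x$-martingale for each $n$; any other solution of the generalized martingale problem is pinned down up to each $\tau(D_n)$ by the same localization lemma, which gives uniqueness. The Feller property of $(P_x)_{x\in U}$ and the strong Markov property of $(P_x)_{x\in\hat U}$ then follow from well-posedness of the generalized martingale problem together with the corresponding properties of the $(\tilde P^n_x)$: the strong Markov property of a well-posed martingale problem is standard (e.g.\ Chapter~4 of \cite{ethktz}), and weak continuity of $x\mapsto P_x$---hence the Feller property---localizes to each $D_n$, where it comes from continuity of $(\tilde F_n,\tilde G_n)$ and uniqueness. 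This is exactly Theorem~1.13.1 of \cite{pinsky}, which I would cite.

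For the dichotomy at the lifetime, assume in addition that $F,G$ are bounded on every bounded subset of $U$, and work on $\{\tau(U)<\infty\}$, writing $\tau=\tau(U)$. The definition of $\hat\Omega_U$ together with continuity of $X$ into $\hat U$ already gives $X(\tau)=\c$, hence $\rho_U(X(t),\c)\to 0$ as $t\uparrow\tau$; what must be proved is that the path does not oscillate, i.e.\ that either $|X(t)|\to\infty$ or $X(t)\to z$ for a single $z\in\partial U$. The key lemma I would establish is a uniform lower bound on crossing times: for compacts $K\subset K'\subset\subset U$ with $K\subset\mathrm{int}\,K'$, each passage of $X$ from $\partial K$ to $\partial K'$ has conditional expected duration bounded below by a positive constant depending only on $\mathrm{dist}(K,\partial K')$ and on the finite bounds for $F,G$ and the ellipticity constant of $G$ on $K'$---this comes from the Burkholder--Davis--Gundy inequality applied to the martingale part of $X$ while it lies in $K'$. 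By the strong Markov property and Borel--Cantelli, the finite time $\tau$ then permits only finitely many such passages, so eventually $X$ stays on one side of $K$; since $X(\tau)=\c$ forces $X$ to leave every compact subset of $U$, it must eventually leave $K$ for good. Applying this with $K=K_{R,\delta}:=\{y\in U:|y|\le R,\ d(y,\partial U)\ge\delta\}$ along a sequence $R\uparrow\infty$, $\delta\downarrow 0$, and carrying out the (routine but slightly fiddly) case analysis, gives that $\liminf_{t\uparrow\tau}|X(t)|=\infty$---the first alternative---or else $d(X(t),\partial U)\to 0$; in the latter case one more application of the crossing-time lemma near $\partial U$ (where, by the extra hypothesis, $F,G$ remain bounded) rules out approaching two distinct boundary points and yields $X(t)\to z\in\partial U$. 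For the full details I would defer to \cite{pinsky}.

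I expect the localization yielding existence, uniqueness, Feller and the strong Markov property to be routine once the coefficients are extended correctly (the only subtlety there is arranging the extension $\tilde G_n$ to remain continuous, bounded, and uniformly positive definite). The one place genuine work is required is the lifetime dichotomy: the crossing-time estimate and the attendant Borel--Cantelli argument, together with the strengthened hypothesis that $F,G$ are bounded on bounded---not merely compact---subsets of $U$, so that the coefficients stay under control as $\partial U$ is approached, are what actually prevent the path from oscillating as it terminates.
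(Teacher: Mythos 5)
Your proposal is correct in substance, but the second half takes a genuinely different route from the paper, so a comparison is in order. For existence, uniqueness, the Feller property and the strong Markov property, both you and the paper simply reduce to Theorem 13.1 in Chapter 1 of \cite{pinsky}; your reconstruction of the localization via cutoff coefficients $\tilde F_n,\tilde G_n$ is exactly what that theorem does, and nothing more needs to be said. The part the paper actually proves is the lifetime dichotomy, and there your mechanism differs. You first pass to the semimartingale decomposition of $X$ (legitimate, via $f(x)=x_i$ and $f(x)=x_ix_j$ in the martingale problem, but an extra step), lower-bound each crossing time from $\ptl K$ to $\ptl K'$ by applying Doob/BDG to the martingale part, and then rule out infinitely many crossings in finite time by the strong Markov property plus a conditional Borel--Cantelli argument. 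The paper instead stays entirely inside the martingale-problem formulation: it uses the $C^2$ test functions $f_z(x)=|x-z|^2$, for which $|Lf_z|\le A$ on bounded subsets of $U$, observes that each completed oscillation of size $\ep$ contributes at least $\ep$ minus $A$ times its duration to a mean-zero martingale $S_i$, and applies Doob's maximal inequality to the nonnegative martingale $S_i+TA$ to get the explicit tail bound $P(\text{$i$ oscillations before } \tau(U)\wedge T)\le TA/(i\ep)$. The two arguments encode the same idea (an $\ep$-oscillation in a region where the coefficients are bounded costs a definite amount of time), but the paper's version avoids both the semimartingale decomposition and the strong-Markov/Borel--Cantelli bookkeeping at renewal times, and yields a quantitative bound in one stroke. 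Two small corrections to your sketch: the crossing-time lower bound does not depend on any ellipticity constant of $G$ --- only on upper bounds for $F$ and $G$ on the relevant bounded set (invertibility of $G$ plays no role in this step, in yours or in the paper's argument) --- and the deferral of ``full details'' to \cite{pinsky} is misplaced for this last statement, since it is precisely the part not taken from \cite{pinsky}; your outline, however, is complete enough to stand on its own.
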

\begin{proof}
Everything except the last statement belongs to Theorem 13.1 in Chapter 1 of \cite{pinsky}; the last statement is proved in the Appendix.
\end{proof}

\subsection{Quasi-diffusions}
Next we define quasi-diffusion (QD), for which the following fact provides motivation: namely, that the solution to the generalized martingale problem given in Lemma \ref{lem:SDExist} is uniquely characterized by the following properties:
\enumrom
\item for each $x \in U$, $P_x(X(0)=x)=1$,
\item $t\mapsto X(t)$ is a.s.~continuous with respect to Euclidean distance for all $t\in [0,\tau(U))$, and
\item the following processes are local martingales:
\begin{align*}
& X^m(t) := X(t) - X(0) - \int_0^t F(X(s))ds \ \ \text{and} \\
& X^m(t)(X^m)^{\top}(t) - \int_0^t G(X(s))ds.\end{align*}
\enumend
In (iii), ``local'' refers to a localizing sequence that increases to $\tau(U)$, as for example $(\tau(D_n))$ in Definition \ref{def:mp}. The forward implication -- that the solutions described by Lemma \ref{lem:SDExist} have these properties -- follows by choosing $f(x)=x_i$, $i=1,\dots, d$ for the first expression in (iii) and $f(x)=x_ix_j$, $i,j=1,\dots,d$ for the second. The interested reader may deduce the reverse implication with the help of It{\^o}'s formula (Chapter 1, Theorem 4.57 in \cite{jacod}), although we will not need the rigorous result. Using this idea, roughly speaking, a family of processes $(x_\ep)$ should be a QD with coefficients $F$ and $G$ if, for small $\ep>0$, sample paths are nearly continuous, i.e., have only small jumps, and the processes $x_\ep^m(t):= x_\ep(t)-\int_0^t F(x_\ep(s))ds$ and $\smash x_\ep^m(t)(x_\ep^m)(t))^{\top} - \int_0^t G(x_\ep(s))ds$ are nearly martingales. For a QDP the definition will be similar, except with $\ep^2 \int_0^t G(x_\ep(s))ds$ in place of $\int_0^t G(X_\ep(s))ds$.\\

In order to be precise about ``nearly martingales'', a QD and QDP should have first- and second-order martingales similar in appearance to those in property (iii) above. As is tradition in the theory of stochastic processes, we shall generally assume processes are defined on a filtered probability space  satisfying the usual conditions, and adapted to the filtration. Processes are defined on a time interval $[0,\zeta)$, where $\zeta \in (0,\infty]$ is a predictable time that we call the terminal time, and may be finite. For a particular $X$, $\zeta(X)$ denotes the terminal time of $X$. To describe QDPs, we will work with the class of semimartingales. A semimartingale (SM) is a \cdlg~(right continuous with left limits) process $X$ that can be written
$$X=X(0) + M+A,$$
where $M,A$ are \cdlg, $M$ is a local martingale and $A$ has locally finite variation. For example, the solutions described by Lemma \ref{lem:SDExist} are semimartingales, based on the decomposition $X(t) = X(0) + X^m(t) + \int_0^t F(X(s))ds$ given by (iii) above. A semimartingale is said to be special if it has a decomposition of the above type for which $A$ is predictable. In this case, the decomposition is unique (see \cite[I.4.22]{jacod}) and we denote $M$ and $A$ by $X^m$ and $X^p$, and refer to them as the martingale part and compensator, respectively, in agreement with the standard definition of compensator. For a locally square-integrable martingale $M$, we denote by $[M]$ and $\lng M \rng$ the quadratic variation (qv) and predictable quadratic variation (pqv). We will say that a SM is locally $L^2$ if it is special and if $X^m$ is locally square-integrable.\\

For a \cdlg~process $X$ and $c>0$, define the operators $J_c$ and $J^c$ by
$$J^cX = \sum_{s \le t} \Delta X(s)\1(|X(s)|>c)$$
and $J_cX = X - J^cX$. Since a \cdlg~function has only finitely many jumps of a given minimum size on any finite time interval, $J^cX$ makes sense. Since $J^cX$ is \cdlg~and has finite variation, if $X$ is a SM then so are $J^cX$ and $J_cX$, moreover $|\Delta J_cX| \le c$. As shown in \cite[I.4.24]{jacod}, is $Y$ is a SM with $|\Delta Y| \le c$ then $Y$ is special and $|Y^p| \le c$, $|Y^m| \le 2c$. So, $J_cX$ is special and $|(J_cX)^p| \le c$, $|(J_cX)^m| \le 2c$. As noted in \cite[I.4.1]{jacod}, martingales with bounded jumps are locally $L^2$, so $J_cX$ is locally $L^2$.\\

We are now ready to define QD and QDP, and to relate the notions to each other and to diffusions. We begin with the definition of a QD, and a result that shows a QD converges to a diffusion.

\begin{definition}[Quasi-diffusion]\label{def:qd}
Let $U \subset \R^d$ be an open set and let $F:U\to\R^n$ and $G:U \to M_+(\R,d)$ be measurable. Let $\E\subset \R_+$ be a countable set that accumulates at $0$. A family of semimartingales $(X_\ep)_{\ep \in \E}$ is a quasi-diffusion on $U$ with characteristics $F,G$ if for each domain $D\subset\subset U$, fixed $c,T>0$ and $\tau(D,\ep) := \inf\{t \colon X_\ep(t) \notin D \ \text{or} \ X_\ep(t^-) \notin D\}$, $\zeta(X_\ep) > \tau(D,\ep)$ and
\begin{enumerate}[noitemsep,label={(\roman*)}]
\item $P(J^c X_\ep(t)=0 \ \text{for all} \ t \le \tau(D,\ep)\wedge T) \to 1$ as $\ep \to 0$,
\item $\sup_{t \le \tau(D,\ep)\wedge T} \left| \, (J_cX_\ep)^p(t)  - \int_0^t F(X_\ep(s))ds \, \right| \cp 0 \ \ \text{as} \ \ \ep \to 0$, and
\item $\sup_{t \le \tau(D,\ep) \wedge T} \left| \lng (J_cX_\ep)^m \rng(t) - \int_0^t G(X_\ep(s))ds \, \right| \cp 0 \ \ \text{as} \ \ \ep \to 0$,
\end{enumerate}
with $\cp$ meaning convergence in probability.
\end{definition}

$F$ is called the drift and $G$, the diffusion. As shown in the next result, a quasi-diffusion with characteristics $F,G$ converges to the corresponding diffusion process. Since we allow for processes with explosion, the mode of convergence we'll use is a localized version of convergence in distribution.

\begin{definition}[Local convergence in distribution]\label{def:lcd}

Let $U\subset \R^d$ be an open and connected set, and suppose $X_n, n=1,2,\dots$ and $X$ are \cdlg~processes taking values in $\R^d$. For a bounded domain $D\subset \subset U$ define $\tau(D,n) = \inf\{t \colon X_n(t^-) \notin D \ \text{or} \ X_n(t) \notin D\}$, similarly define $\tau(D)$ for $X$, and assume that $\zeta(X_n) > \tau_n(D)$ and $\zeta(X)>\tau(D)$ for all $n$ and $D\subset\subset U$, where $\zeta(X_n),\zeta(X)$ are the terminal times. Say that $(X_n)$ converges locally in distribution to $X$ on $U$ as $n\to\infty$, writing $X_n \smash \lcd X$ on $U$, if there is a sequence of domains $D_1 \subset\subset D_2 \subset\subset \dots$ increasing to $U$ such that
$$\smash X_n(\cdot \wedge \tau(D_i,n)) \cd X(\cdot \wedge \tau(D_i))$$
for each $i$, where $\smash\cd$ is convergence in distribution with respect to the Skorohod topology.
\end{definition}

In the above, we assume the Euclidean metric on $\R^n$ is used for the Skorohod topology. Note that the solution $X$ described by Lemma \ref{lem:SDExist} takes values in $\hat U = U\cup\{\c\}$. Defining $\zeta(X)=\inf\{t \colon X(t)=\c\}$, the restriction of $X$ to the time interval $[0,\zeta(X))$ fits the context of Definition \ref{def:lcd}. 

\begin{lemma}[Quasi-diffusions converge to diffusions]\label{lem:diff-limit}
Let $U,F,G$ satisfy the assumptions of Lemma \ref{lem:SDExist}, and given $x \in U$ let $X$ denote the corresponding diffusion with $X(0)=x$. If $(X_\ep)$ is a QD with characteristics $F,G$ and $X_\ep(0) \cp x$ as $\ep \to 0$ then $X_\ep \smash \lcd X$ as $\ep \to 0$.
\end{lemma}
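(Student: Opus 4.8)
The plan is to reduce everything to the martingale problem. Fix an exhaustion $D_1\subset\subset D_2\subset\subset\cdots$ of $U$ by bounded domains — I would take the $(D_n)$ that come with the generalized martingale problem for $F,G$, smoothing their boundaries if convenient — and prove that for each $i$ the stopped processes $X_\ep^{(i)}:=X_\ep(\cdot\wedge\tau(D_i,\ep))$ converge in distribution, in the Skorohod topology, to $X(\cdot\wedge\tau(D_i))$; by Definition \ref{def:lcd} this is exactly the assertion $X_\ep\lcd X$. For a single $i$ the argument is the classical tightness-plus-identification scheme, transplanted to the stopped processes, and I would carry it out after fixing constants $c,T>0$.

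First I would establish tightness of $(X_\ep^{(i)})_\ep$. On the event $\{J^cX_\ep(t)=0$ for all $t\le\tau(D_i,\ep)\wedge T\}$, whose probability tends to $1$ by property (i) of Definition \ref{def:qd}, one has on $[0,T]$ the decomposition $X_\ep^{(i)}=X_\ep(0)+(J_cX_\ep)^m_{\cdot\wedge\tau(D_i,\ep)}+(J_cX_\ep)^p_{\cdot\wedge\tau(D_i,\ep)}$ with jumps of size at most $c$. Since $\overline{D_i}$ is compact, $F$ and $G$ are bounded on it under the hypotheses of Lemma \ref{lem:SDExist}, so properties (ii) and (iii) say that, with probability $\to1$, the compensator and the predictable quadratic variation of the martingale part are within $o_P(1)$, uniformly on the stopped interval, of the uniformly Lipschitz paths $t\mapsto\int_0^tF(X_\ep(s))\,ds$ and $t\mapsto\int_0^tG(X_\ep(s))\,ds$. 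Feeding this into the tightness criterion of Aldous (or Rebolledo's criterion for the characteristics) gives tightness in the Skorohod space $D([0,\infty),\R^d)$; and since the same bound holds for \emph{every} $c>0$, every subsequential limit is a.s.\ continuous.

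Next I would identify a subsequential limit $\tilde X$. Applying the semimartingale It\^o formula to $f(X_\ep)$ for $f\in C^2(U)$ on $[0,\tau(D_i,\ep)\wedge T]$, using property (i) to discard the jump contributions and (ii)--(iii) to recognize the drift of $f(X_\ep)$ and the bracket of its martingale part as $\int_0^t(Lf)(X_\ep(s))\,ds$ up to $o_P(1)$, shows that $f(X_\ep(t))-f(X_\ep(0))-\int_0^t(Lf)(X_\ep(s))\,ds$ is, uniformly on the stopped interval, within $o_P(1)$ of a genuine martingale. Because $f$ and its derivatives up to order two, hence $Lf$, are bounded on $\overline{D_i}$, all quantities in play are uniformly integrable, so I could pass to the subsequential limit and conclude that $f(\tilde X(t\wedge\tau(D_i)))-\int_0^{t\wedge\tau(D_i)}(Lf)(\tilde X(s))\,ds$ is a martingale for the natural filtration of $\tilde X$, for every $f\in C^2(U)$, while $\tilde X(0)=x$ follows from $X_\ep(0)\cp x$. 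This is precisely the condition in the generalized martingale problem with the single domain $D_i$, so the uniqueness statement of Lemma \ref{lem:SDExist} forces the law of $\tilde X$ to equal that of $X(\cdot\wedge\tau(D_i))$. As the subsequential limit is thus unique, $X_\ep^{(i)}\cd X(\cdot\wedge\tau(D_i))$, and $i$ being arbitrary, $X_\ep\lcd X$.

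\textbf{Expected main obstacle.} The delicate point will be the passage to the limit \emph{through} the stopping time $\tau(D_i)$: the evaluation map $w\mapsto w(\cdot\wedge\tau(D_i))$ on Skorohod space is continuous only at paths that genuinely exit $D_i$ rather than merely grazing $\partial D_i$, so the step "pass to the subsequential limit'' in the identification is legitimate only once one knows that, under the law of $\tilde X$ — equivalently, of the diffusion $X$ — the time $\tau(D_i)$ is a.s.\ a point of strict exit. This is exactly where the non-degeneracy of $G$ is indispensable: a non-degenerate diffusion crosses a sufficiently regular boundary transversally and does not dwell on it, which is the reason I would insist on smooth $\partial D_i$. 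A secondary but genuine nuisance is the semimartingale It\^o bookkeeping needed to turn properties (i)--(iii) into the uniform approximate-martingale statement, in particular the replacement of $[(J_cX_\ep)^m]$ by $\langle(J_cX_\ep)^m\rangle$; this is controlled because jumps are bounded by the fixed constant $c$, with the resulting error estimated by Lenglart's inequality before letting $\ep\to0$.
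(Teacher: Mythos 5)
Your proposal is correct in outline and shares the paper's skeleton (stop at an exhausting family of domains, show the stopped processes converge, identify the limit through the martingale problem, invoke uniqueness), but the execution in the middle is genuinely different. The paper does not redo tightness and identification by hand: it massages the QD into the hypotheses of Theorem 4.1 of Chapter 7 of \cite{ethktz} and cites that result as a black box. The massaging is where its technical content lies: (a) since condition (i) of Definition \ref{def:qd} holds for every fixed $c$, a diagonal argument produces $a_\ep\to0$ such that all three conditions hold with $a_\ep$ in place of $c$, and one works with $Z_\ep=J_{a_\ep}X_\ep$, whose jumps are bounded by $a_\ep$; (b) the jump conditions on the triple $(X_n,B_n,A_n)=(Z_{\ep_n},Z_{\ep_n}^p,\langle Z_{\ep_n}^m\rangle)$ are then verified, the only nonobvious one being $|\Delta\langle Z_\ep^m\rangle|\le 4a_\ep^2$, which is proved via predictable projections and $\Delta[M]=\Delta M\,\Delta M^{\top}$ -- this replaces your Lenglart step; and (c) the exit-time discontinuity you correctly flag as the main obstacle is handled not by transversality of a non-degenerate diffusion across a smooth boundary, but by the Ethier--Kurtz device of taking the nested family $D_r=\{x\colon|x|<r,\ d(x,U^c)>1/r\}$ and discarding the at most countably many radii $r$ at which the stopped-path map fails to be a.s.\ continuous under the limit law; this costs nothing and avoids any boundary-regularity or ellipticity argument at that point. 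Your Aldous-plus-It\^o route would also work, at the price of redoing the bookkeeping that \cite{ethktz} already contains. One step you pass over too quickly: the uniqueness you invoke is for the stopped martingale problem on $D_i$, which is not literally the statement of Lemma \ref{lem:SDExist}; the paper derives it by extending $F,G$ to globally defined $F_D,G_D$ agreeing with them on $D$, using well-posedness on $\R^d$ and the localization theorem (Chapter 4, Theorem 6.1 of \cite{ethktz}) to get well-posedness of the stopped problem, and noting the stopped solution coincides with $X(\cdot\wedge\tau(D))$. You would need to supply the same (standard) argument.
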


\begin{proof}
This is done in the Appendix.
\end{proof}

\subsection{Quasi-diffusive perturbations}

Now we give the definition of a QDP, and a result giving conditions for a rescaled QDP to be a QD. The definition of a QDP is made deliberately so that if $(x_\ep)$ is a QDP then provided that the rescaled characteristics convergence (see Lemma \ref{lem:QDP-QD}), rescaled processes of the form
\begin{align}\label{eq:rescale}
X_\ep(t) :=a_\ep(x_\ep(b_\ep t)-x_\star),
\end{align}
with $x_\star \in \R^d$ and $(a_\ep),(b_\ep)$ sequences of positive numbers, will be QDs.

% This type of scaling we refer to as isotropic, since the time and space rescaling is the same in every direction. Later we shall give examples for which the different vector components of $x_\ep$ are rescaled at inequivalent rates as $\ep \to 0$.

\begin{definition}[Quasi-diffusive perturbation, isotropic case]\label{def:QDP}
Let $U \subset \R^d$ be an open set and let $F:U\to\R^n$ and $G:U \to M_+(\R,d)$ be measurable. Let $\E\subset \R_+$ be a countable set that accumulates at $0$ and let $(a_\ep)_{\ep\in \E}$, $(b_\ep)_{\ep\in \E}$, be sets of positive real numbers, and let $(D_\ep)_{\ep \in \E}$ be a collection of domains with $D_\ep \subset \subset U$ for each $\ep$. A family of semimartingales $(x_\ep)_{\ep \in \E}$ is a quasi-diffusive perturbation (QDP) on $(D_\ep)$ to scale $(a_\ep),(b_\ep)$ with characteristics $F,G$, if for fixed $c,T>0$ and $\tau_\ep := \inf\{t \colon x_\ep(t) \notin D_\ep \ \text{or} \ x_\ep(t^-) \notin D_\ep\}$, $\zeta(x_\ep) > \tau_\ep$ and
\begin{enumerate}[noitemsep,label={(\roman*)}]
\item $P(J^{c/a_\ep}x_\ep(t)=0 \ \text{for all} \ t \le \tau_\ep\wedge b_\ep T) \to 1$ as $\ep \to 0$,
\item $a_\ep \sup_{t \le \tau_\ep\wedge b_\ep T} \left| \, (J_{c/a_\ep}x_\ep)^p(t)  - \int_0^t F(x_\ep(s))ds \, \right|  \cp 0$, and
\item $ a_\ep^2 \sup_{t \le \tau_\ep \wedge b_\ep T} \left| \, \lng (J_{c/a_\ep}x_\ep)^m \rng(t) - \ep^2\int_0^t G(x_\ep(s))ds \, \right| \cp 0$,
\end{enumerate}
where $\cp$ denotes convergence in probability.\\
$(x_\ep)$ is a QDP on $U$ if $(D_\ep)$ can be chosen such that $\lim_{\ep \to 0} \bigcup_{\ep' \le \ep}D_{\ep'}=U$.
\end{definition}

As with QD, $F$ is referred to as drift and $G$, as diffusion. The first condition says that larger than $c/a_\ep$ jumps contribute nothing on the time interval $[0,b_\ep T]$, while conditions (ii) and (iii) say that the compensator and pqv are well approximated, to order $a_\ep$ and $a_\ep^2$, by the pathwise integrals of $F(x_\ep)$ and $ \ep^2 G(x_\ep)$, respectively -- the reason for this choice of error bounds becomes clear in Lemma \ref{lem:QDP-QD}. We record a few notes on the definition.
\begin{itemize}[noitemsep]
\item Restricting $D_\ep$ or decreasing $b_\ep$ can lead to better estimates on drift and diffusion error.
\item By definition, $J_{c/a_\ep}x_\ep$ has bounded jumps, so is locally $L^2$, so we need not assume the same of $x_\ep$.
\item If $F,G$ are locally bounded, then (i) implies that $x_\ep$ can be replaced with $J_{\dlt c_\ep} x_\ep$ in (ii)-(iii).
\item Even if the processes $(x_\ep)$ are locally $L^2$, (i) does not imply that $J_{\dlt a_\ep}x_\ep$ can be replaced with $x_\ep$ in (ii)-(iii), if $x_\ep$ has increasingly large and infrequent jumps as $\ep \to 0$.
\item For given $F,G$ satisfying the conditions of Lemma \ref{lem:SDExist}, the solution to the generalized martingale problem, trivially parametrized by $\ep>0$, is a QDP on $U$ to any order $(a_\ep)$, on any time scale $(b_\ep)$.
\end{itemize}

Now we give a result stating conditions for a rescaled QDP to be a QD. It is clear from this result that the definition of QDP is tailored to minimize the requirements for convergence.

\begin{lemma}\label{lem:QDP-QD}
Let $(x_\ep)$ be a family of semimartingales, and let $x_\star \in \R^d$, and $\tld U \subset \R^d$ be an open set whose closure contains the origin. Suppose that for every domain $D\subset\subset \tld U$, $(x_\ep)$ is a QDP on $(D_\ep):=(\{x_\star + x/a_\ep\colon x \in D\})$ to scale $(a_\ep),(b_\ep)$ with characteristics $F,G$, and that the following limits exist for every $x \in \tld U$:
\begin{align}\label{eq:QD-coeff}
\tF(x) &:= \lim_{\ep \to 0}a_\ep b_\ep F(x_\star  + x/a_\ep) \ \text{and} \nonumber \\
\tG(x) &:= \lim_{\ep \to 0}\ep^2 a_\ep^2 b_\ep G(x_\star + x/a_\ep), 
\end{align}
with uniform convergence on compact subsets of $\tld U$. Then $(X_\ep)$ defined by \eqref{eq:rescale} is a QD on $\tld U$ with characteristics $\tF,\tG$ given by \eqref{eq:QD-coeff}.
\end{lemma}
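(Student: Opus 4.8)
The plan is a direct verification of the three defining conditions of a quasi-diffusion (Definition \ref{def:qd}) for the rescaled family $(X_\ep)$, reducing each to the corresponding condition in the definition of a QDP (Definition \ref{def:QDP}) for $(x_\ep)$. The workhorse is the observation that the rescaling \eqref{eq:rescale} is the composition of a spatial affine map $y\mapsto a_\ep(y-x_\star)$ with a deterministic time change $t\mapsto b_\ep t$, under which every relevant semimartingale operation transforms in an elementary way.

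First I would fix a domain $D\subset\subset\tld U$ and set up the dictionary between the two pictures. Writing $\F_\ep$ for the filtration of $x_\ep$, the process $X_\ep$ is a semimartingale for the time-changed filtration $(\F_\ep(b_\ep t))_t$, which still satisfies the usual conditions. Since $\Delta X_\ep(t)=a_\ep\Delta x_\ep(b_\ep t)$, we have $|\Delta X_\ep(t)|>c\iff|\Delta x_\ep(b_\ep t)|>c/a_\ep$, so $J^cX_\ep(t)=a_\ep\,J^{c/a_\ep}x_\ep(b_\ep t)$ and hence $J_cX_\ep(t)=a_\ep\big(J_{c/a_\ep}x_\ep(b_\ep t)-x_\star\big)$. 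Both processes have jumps bounded by $c$, resp.\ $c/a_\ep$, hence are special (\cite[I.4.24]{jacod}, as recalled above); and because a deterministic continuous time change carries predictable processes to predictable processes and (local) martingales to (local) martingales (a localizing sequence $(\sigma_n)$ becoming $(\sigma_n/b_\ep)$), uniqueness of the canonical decomposition yields
$$(J_cX_\ep)^p(t)=a_\ep(J_{c/a_\ep}x_\ep)^p(b_\ep t),\qquad (J_cX_\ep)^m(t)=a_\ep(J_{c/a_\ep}x_\ep)^m(b_\ep t),$$
and therefore $\lng(J_cX_\ep)^m\rng(t)=a_\ep^2\lng(J_{c/a_\ep}x_\ep)^m\rng(b_\ep t)$. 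Finally, $\tau(D,\ep)=\tau_\ep/b_\ep$, where $\tau_\ep$ is the exit time of $x_\ep$ from $D_\ep=\{x_\star+x/a_\ep\colon x\in D\}$, so the window $\{t\le\tau(D,\ep)\wedge T\}$ corresponds under $s=b_\ep t$ exactly to $\{s\le\tau_\ep\wedge b_\ep T\}$, and $\zeta(X_\ep)=\zeta(x_\ep)/b_\ep>\tau(D,\ep)$.

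With this dictionary each condition falls out. Condition (i) for $X_\ep$ is literally condition (i) of Definition \ref{def:QDP} after the substitution $s=b_\ep t$. For (ii), substituting $u=b_\ep v$ in the integral and using $x_\ep(b_\ep v)=x_\star+X_\ep(v)/a_\ep$, I would split
$$(J_cX_\ep)^p(t)-\int_0^t\tF(X_\ep(v))\,dv=a_\ep\Big[(J_{c/a_\ep}x_\ep)^p(b_\ep t)-\int_0^{b_\ep t}F(x_\ep(u))\,du\Big]+\int_0^t\big[a_\ep b_\ep F(x_\star+X_\ep(v)/a_\ep)-\tF(X_\ep(v))\big]\,dv.$$
Over $t\le\tau(D,\ep)\wedge T$ the supremum of the first bracket is precisely the quantity appearing in condition (ii) of Definition \ref{def:QDP}, hence $\cp 0$; the second term is bounded in absolute value by $T\sup_{x\in\ol D}\big|a_\ep b_\ep F(x_\star+x/a_\ep)-\tF(x)\big|$, because $X_\ep(v)\in D$ for $v<\tau(D,\ep)$ and $\ol D$ is compact, and this tends to $0$ by the locally uniform convergence assumed in \eqref{eq:QD-coeff}. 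Condition (iii) is handled identically, with $a_\ep^2$ in place of $a_\ep$, $\ep^2G$ in place of $F$, and $\tG$ in place of $\tF$, invoking condition (iii) of Definition \ref{def:QDP} and the locally uniform convergence $\ep^2a_\ep^2b_\ep G(x_\star+\cdot/a_\ep)\to\tG$. Since $D\subset\subset\tld U$ was arbitrary and the QDP hypothesis supplies these estimates for each such $D$, this gives that $(X_\ep)$ is a QD on $\tld U$ with characteristics $\tF,\tG$.

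The only genuine (though still routine) obstacle I anticipate is the bookkeeping of the dictionary step: checking carefully that $J^c$, the special-semimartingale decomposition, the compensator, and the predictable quadratic variation each commute with the spatial rescaling together with the deterministic time change, and that predictability and the local-martingale property survive that time change. Once those identities are in hand, what remains is changes of variable in the integrals plus the two uniform-convergence hypotheses.
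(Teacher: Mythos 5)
Your proposal is correct and follows essentially the same route as the paper's proof: the identities $J^cX_\ep(t)=a_\ep J^{c/a_\ep}x_\ep(b_\ep t)$, linearity of $X\mapsto X^p$, degree-$2$ homogeneity of $X\mapsto\lng X^m\rng$, the change of variables $s=b_\ep t$ in the integrals, and the locally uniform convergence in \eqref{eq:QD-coeff} to absorb the difference between $a_\ep b_\ep F(x_\star+\cdot/a_\ep)$ and $\tF$ (and likewise for $G$). The extra care you take with the time-changed filtration and the survival of predictability and the local-martingale property is exactly the bookkeeping the paper leaves implicit.
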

\begin{proof}
For $c>0$ we have
$$J^c X_\ep(t) = a_\ep J^{c/a_\ep} x_\ep(b_\ep t),$$
so condition (i) of Definition \ref{def:qd} follows from condition (i) of Definition \ref{def:QDP}. Next, the map $X\mapsto X^p$ on special semimartingales is linear, and the map $X\mapsto \lng X^m\rng$ on locally $L^2$ semimartingales is homogeneous of degree 2. Thus, it follows from \eqref{eq:rescale} that
$$(J_cX_\ep)^p(t) = a_\ep \,(J_{c/a_\ep}x_\ep)^p(b_\ep t) \quad \text{and} \quad \lng (J_cX_\ep)^m\rng(t) = a_\ep^2 \lng (J_{c/a_\ep}x_\ep)^m \rng(b_\ep t).$$
Moreover,
\begin{align*}
&\int_0^{b_\ep t}F(x_\ep(s))ds = \int_0^t F(x_\ep(b_\ep s))d(b_\ep s) = b_\ep \int_0^t F(x_\star + X_\ep(s)/a_\ep)ds, \\ 
& \text{similarly} \ \ \int_0^{b_\ep t}G(x_\ep(s))ds = b_\ep \int_0^t G(x_\star + X_\ep(s)/a_\ep)ds.
\end{align*}
Therefore,
\begin{align}\label{eq:rescale-error}
\left | \, (J_{c/a_\ep}x_\ep)^p(b_\ep t)  - \int_0^{b_\ep t} F(x_\ep(s))ds \, \right| = \frac{1}{a_\ep}\left | \, (J_cX_\ep)^p(t)  - a_\ep b_\ep \int_0^t F(x_\star + X_\ep(s)/a_\ep)ds \, \right|.
\end{align}
If convergence in \eqref{eq:QD-coeff} is uniform on compact sets, then with $\tau(D,\ep)$ as in Definition \eqref{def:qd},
$$\sup_{t \le \tau(D,\ep) \wedge T}\left |\int_0^t \tld F(X_\ep(s))ds - a_\ep b_\ep \int_0^t F(x_\star+X_\ep(s)/a_\ep)ds \right| \cp 0$$
as $\ep \to 0$, so using \eqref{eq:rescale-error}, condition (ii) in Definition \ref{def:qd} follows from condition (ii) in Definition \ref{def:QDP}. Next, observe that
$$\left| \, \lng (J_{c/a_\ep}x_\ep)^m \rng(b_\ep t) - \ep^2\int_0^{b_\ep t} G(x_\ep(s))ds \, \right| = \frac{1}{a_\ep^2} \left| \, \lng (J_c X_\ep)^m \rng (t) - \ep^2 \, a_\ep^2 b_\ep \int_0^t G(x_\star + X_\ep(s)/a_\ep)ds \right|.$$
Arguing as before, condition (iii) in Definition \ref{def:qd} follows from condition (iii) in Definition \ref{def:QDP}.
\end{proof}

\section{Isotropic limit scales}\label{sec:iso}

In this section we describe the possible limits of $Y_\ep = a_\ep(x(b_\ep t)-x_\star)$ where $(a_\ep),(b_\ep)$ are scalars, which we call isotropic scaling. We'll use the following asymptotic notation: for positive functions $f,g$, use $f(x) \asymp g(x)$ as $x\to a$ to mean that $\lim_{x \to a}f(x)/g(x)$ exists and takes its value in $(0,\infty)$, and $f(x) \ll g(x)$ to mean the same thing as $f=o(g)$, i.e., that $\lim_{x \to a} f(x)/g(x)=0$, and $f\gg g$ if $g\ll f$.\\

Lemma \ref{lem:QDP-QD} gives conditions under which a QDP, rescaled around a point $x_\star$, is a QD, and thus has a diffusion limit. The main requirement is the convergence condition \eqref{eq:QD-coeff}, which we recall: for some open set $\tld U$, uniformly over $x$ in any compact $K\subset \tld U$, the following limits exist:
\begin{align*}
\tF(x) &:= \lim_{\ep \to 0}a_\ep b_\ep F(x_\star  + x/a_\ep) \ \text{and} \\
\tG(x) &:= \lim_{\ep \to 0}\ep^2 a_\ep^2 b_\ep G(x_\star + x/a_\ep).\nonumber 
\end{align*}
These conditions are a bit opaque, so let's come up with a more intuitive and equivalent description. Convergence can be tidily broken down into three parts: shape, drift to diffusion ratio, and time scale. We begin with a brief description, then work back from \eqref{eq:QD-coeff} to flesh it out.
\enumrom
\item The functions $x\mapsto F(x_\star+x/a_\ep)$ and $x\mapsto G(x_\star+x/a_\ep)$ have a limiting shape as $\ep \to 0$.

\item The rescaled drift to diffusion ratio converges to $0$, $\infty$, or to a non-zero function of $x$.
\item The time scale is just long enough for either drift or diffusion to be non-vanishing.
\enumend

In particular, we ignore the trivial case $\tF \equiv 0$ and $\tG \equiv 0$. Let's now use \eqref{eq:QD-coeff} to frame these concepts.

\enumrom

\item \textit{Shape.} Working back from \eqref{eq:QD-coeff} and defining $f_\ep=1/(a_\ep b_\ep)$ and $g_\ep = 1/(\ep^2 a_\ep^2 b_\ep)$, for each $x$, as $\ep \to 0$,
\begin{align}\label{eq:FG-shape}
F(x_\star+x/a_\ep) &\sim f_\ep \,\tF(x) \quad \text{and} \nonumber \\
G(x_\star+x/a_\ep) &\sim g_\ep \,\tG(x).
\end{align}
In other words, $F$ and $G$ are asymptotically a product of a function of $\ep$, with a function of $x$. This is true if $F$ and $G$ are locally homogeneous around $x_\star$, i.e., if there exist $\lph,\beta \ge 0$ and homogeneous functions $Q,V$ of degrees $\lph,\beta$ respectively (i.e., such that $Q(rx)=r^\lph Q(x)$ and $V(rx)=r^\beta V(x)$ for real $r$), such that as $x\to 0$,
\begin{align}\label{eq:FG-hom}
F(x_\star+x) \sim Q(x) \quad \text{and} \quad G(x_\star+x) \sim V(x).
\end{align}
\eqref{eq:FG-hom} holds, for example, if $F,G$ have a Taylor expansion around $x_\star$, in which case $\lph,\beta$ are integers. When \eqref{eq:FG-hom} holds, if $a_\ep \to \infty$ as $\ep \to 0$ then for each $x$, as $\ep \to 0$,
\begin{align}\label{eq:FG-hom-shape}
F(x_\star+x/a_\ep) \sim (1/a_\ep)^\lph Q(x) \quad \text{and} \quad G(x_\star+x/a_\ep)\sim (1/a_\ep)^\beta V(x).
\end{align}
Let $h_\ep=a_\ep^{1-\lph}b_\ep$ and $\ell_\ep=\ep^2 a_\ep^{2-\beta} b_\ep$. Combining \eqref{eq:FG-shape} and \eqref{eq:FG-hom-shape}, for each $x$, as $\ep \to 0$,
\begin{align}\label{eq:tF-tG-ratio}
\tF(x) \sim h_\ep Q(x) \quad \text{and} \quad \tG(x) \sim \ell_\ep V(x).
\end{align}
If $Q,V$ are not identically zero, then each expression gives a dichotomy.
\itemgo
\item Either $h_\ep \to 0$, in which case $\tF \equiv 0$, or $h_\ep \asymp 1$, in which case $\tF \propto Q$.
\item Either $\ell_\ep\to 0$, in which case $\tG \equiv 0$, or $\ell_\ep \asymp 1$, in which case $\tG \propto V$.
\itemend
The converse is also true: if \eqref{eq:FG-hom} holds and $(h_\ep),(\ell_\ep)$ each satisfy one of the two above conditions then \eqref{eq:QD-coeff} holds with $\tF,\tG$ as described, and convergence is locally uniform.\\

\item \textit{Drift to diffusion ratio.} The ratio of the terms in \eqref{eq:QD-coeff} is
\begin{align}\label{eq:rsc-dd-ratio}
\frac{|a_\ep b_\ep F(x_\star+x/a_\ep)|}{|\ep^2 a_\ep^2 b_\ep G(x_\star + x/a_\ep)|} = \frac{|F(x_\star+x/a_\ep)|}{\ep^2 a_\ep |G(x_\star + x/a_\ep)|}.
\end{align}
In particular, the ratio depends on $a_\ep$ but not on $b_\ep$.  It can also be written $\ep^{-2}\, \th(1/a_\ep,x)$, where the drift : diffusion ratio function $\theta$ is defined by
\begin{align}\label{eq:dd-th}
\theta(u,x) = \frac{ u \, |F(x_\star + u\,x)|}{|G(x_\star+u\,x)|}.
\end{align}
If \eqref{eq:FG-hom} holds and $x$ is such that $Q(x)\ne 0$ and $V(x) \ne 0$, then as $u\to 0$
$$\th(u,x) \sim \frac{u |Q(ux)|}{|V(ux)|} = \frac{u^{1+\lph}|Q(x)|}{u^\beta |V(x)|} = u^{1+\lph-\beta} \frac{|Q(x)|}{|V(x)|}.$$
In particular, $\th(1/a_\ep,x) \asymp (1/a_\ep)^{1+\lph-\beta}$ for such $x$. Using this as inspiration and referring to (i), we see that $\ep^{-2}(1/a_\ep)^{1+\lph-\beta} = h_\ep / \ell_\ep$, so if both $h_\ep \to 0$ or $h_\ep \asymp 1$, and $\ell_\ep \to 0$ or $\ell_\ep \asymp 1$, then ignoring the trivial case where both tend to $0$, there are three relevant cases, that we refer to collectively as limit scales for the drift to diffusion ratio.
\itemgo
\item If $(1/a_\ep)^{1+\lph-\beta}\ll \ep^2$ then $h_\ep \ll \ell_\ep$ (diffusion dominates).
\item If $(1/a_\ep)^{1+\lph-\beta} \gg \ep^2$ then $\ell_\ep \ll h_\ep$  (drift dominates).
\item If $(1/a_\ep)^{1+\lph-\beta} \asymp \ep^2$ then $\ell_\ep \asymp h_\ep$ (drift matches diffusion).
\itemend 

\item \textit{Time scale.} Given $(a_\ep)$ satisfying one of the above three cases, $(b_\ep)$ should be chosen just large enough that at least one of $\tF,\tG$ is non-zero. If \eqref{eq:QD-coeff} and \eqref{eq:FG-hom} hold then from \eqref{eq:tF-tG-ratio}, $h_\ep \asymp 1 \Lra \tF\propto Q$ and $\ell_\ep \asymp 1 \Lra \tG\propto R$. Recalling that $h_\ep=a_\ep^{1-\lph}b_\ep$ and $\ell_\ep = \ep^2 a^{2-\beta} b_\ep$, 
\itemgo
\item $h_\ep \asymp 1 \Lra b_\ep \asymp a_\ep^{\lph-1}$ and
\item $\ell_\ep \asymp 1 \Lra b_\ep \asymp \ep^{-2}a_\ep^{\beta-2}$.
\itemend
We refer to this choice of $(b_\ep)$ in each case as the visible time scale corresponding to $(a_\ep)$, since it is the time scale at which changes are visible (not too slow, and not too fast).

\enumend
Combining the observations of (i)-(iii), if \eqref{eq:FG-hom} holds, $(1/a_\ep)$ is a limit scale for the drift to diffusion ratio, and $(b_\ep)$ is the visible time scale corresponding to $(a_\ep)$, then \eqref{eq:QD-coeff} holds with $\tF,\tG$ determined as follows.
\itemgo
\item If $(1/a_\ep)^{1+\lph-\beta } \ll \ep^2$ and $b_\ep \asymp \ep^{-2} a_\ep^{\beta-2}$ then $\tld F=0$ and $\tld G \propto V$.
\item If $(1/a_\ep)^{1+\lph-\beta } \gg \ep^2$ and $b_\ep \asymp a_\ep^{\lph-1}$ then $\tld F \propto Q$ and $\tld G=0$.
\item If $(1/a_\ep)^{1+\lph-\beta } \asymp \ep^2$ and $b_\ep \asymp a_\ep^{\lph-1}$ (which is then $\asymp \ep^{-2}a_\ep^{\beta-2}$), then $\tld F \propto Q$ and $\tld G \propto V$.
\itemend

To resolve the inequality on $(1/a_\ep)$, we need an assumption on the sign of $1+\lph-\beta $. To study the effect of the sign, we'll consider $(1/a_\ep)^{1+\lph-\beta} = O(\ep^2)$ which corresponds to non-zero diffusion.
\itemgo
\item If $1+\lph-\beta >0$, $(1/a_\ep)^{1+\lph-\beta }=O(\ep^2)$ iff $a_\ep= \ep^{-2/(1+\lph-\beta )} \gg 1$.
\item If $1+\lph-\beta =0$, $(1/a_\ep)^{1+\lph-\beta }=1$ is not $O(\ep^2)$ as $\ep \to 0$.
\item If $1+\lph-\beta <0$, $(1/a_\ep)^{1+\lph-\beta }=O(\ep^2)$ iff $a_\ep=\ep^{2/(\beta-\lph-1)} \ll 1$.
\itemend
In particular, if $1+\lph-\beta \le 0$ and $a_\ep \to \infty$ as $\ep \to 0$ then diffusion does not occur, i.e., the only limit possible for $\tld G$ is $0$. This case can be excluded if $F=O(G)$, i.e., $|F(x)| \le C|G(x)|$ for some $C>0$ and all $x$, since then $\lph \ge \beta$ and $1+\lph-\beta$ has positive sign. Let us give this property a name.
\begin{definition}\label{def:ssQDP}
A QDP is \emph{strongly stochastic} if its characteristics $F,G$ satisfy $F=O(G)$, i.e., if there exists $C>0$ such that $|F(x)| \le C\, |G(x)|$ for every $x \in U$.
\end{definition}

From the above discussion and Lemma \ref{lem:QDP-QD} we obtain the following result.

\begin{theorem}[Limit processes for QDPs under isotropic scaling]\label{thm:iso-limits}
Let $U,F,G$ be as in Definition \ref{def:QDP}. Let $(x_\ep)$ be a family of semimartingales, and let $x_\star \in \R^d$. Let $\tld U$ be a non-empty open convex set whose closure contains $0$. Suppose that for each domain $D\subset\subset \tld U$, $(x_\ep)$ is a strongly stochastic QDP on $(D_\ep):=(\{x_\star + x/a_\ep\colon x \in D\})$ to scale $(a_\ep),(b_\ep)$, with coefficients $F,G$ that are locally homogeneous around $x_\star$, i.e., that satisfy \eqref{eq:FG-hom}.\\

Let $Y_\ep(t) = a_\ep(x_\ep(b_\ep t)-x_\star)$ and suppose that $a_\epsilon \to \infty$. Suppose $(1/a_\ep)$ is a limit scale for the drift to diffusion ratio, and that $(b_\ep)$ is the visible time scale corresponding to $(a_\ep)$, as described above. Then one of the three cases below holds, and $(Y_\ep)$ is a QD with characteristics $\tF,\tG$ as given.

\enumrom
\item If $1/a_\ep \ll \ep^{2/(1+\lph-\beta )}$ and $b_\ep \asymp \ep^{-2}a_\ep^{\beta-2}$, then $\tF \propto 0$ and $\tG \propto V$.
\item If $1/a_\ep \gg \ep^{2/(1+\lph-\beta )}$ and $b_\ep \asymp a_\ep^{\lph-1}$, then $\tF \propto Q$ and $\tG \propto 0$.
\item If $1/a_\ep \asymp \ep^{2/(1+\lph-\beta )}$ and $b_\ep \asymp a_\ep^{\lph-1} \asymp \ep^{-2} a_\ep^{\beta-2}$, then $\tF \propto Q$ and $\tG \propto V$.

\enumend
\end{theorem}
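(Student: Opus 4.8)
The plan is to reduce Theorem \ref{thm:iso-limits} directly to Lemma \ref{lem:QDP-QD}: the only thing that needs checking is that, in each of the three cases, the limits in \eqref{eq:QD-coeff} exist with locally uniform convergence and equal the claimed $\tF,\tG$ (up to the positive multiplicative constants hidden in $\propto$). Since we are already assuming $(x_\ep)$ is a strongly stochastic QDP on $(D_\ep) = (\{x_\star + x/a_\ep : x \in D\})$ to scale $(a_\ep),(b_\ep)$ for every $D \subset\subset \tld U$, Lemma \ref{lem:QDP-QD} then immediately yields that $(Y_\ep)$ is a QD on $\tld U$ with those characteristics, which is the conclusion. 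So the whole argument is really the ``$\Leftarrow$'' half of the shape/ratio/time-scale analysis in Section \ref{sec:iso}, packaged for citation.

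First I would record the consequence of local homogeneity \eqref{eq:FG-hom}: since $a_\ep \to \infty$, for each fixed $x$ we have $F(x_\star + x/a_\ep) \sim (1/a_\ep)^\lph Q(x)$ and $G(x_\star + x/a_\ep) \sim (1/a_\ep)^\beta V(x)$ as $\ep \to 0$, as in \eqref{eq:FG-hom-shape}, and I would upgrade this to locally uniform convergence on compact subsets of $\tld U$ — this is where convexity of $\tld U$ and the relation $\ol{\tld U} \ni 0$ are used, so that $x/a_\ep$ stays in a shrinking neighbourhood of $0$ and the asymptotic $F(x_\star + y) \sim Q(y)$, $G(x_\star + y)\sim V(y)$ kicks in uniformly for $y$ in the relevant cone. (If $Q$ or $V$ vanishes at some $x$ one argues separately that the rescaled quantity still tends to $0$; homogeneity makes the bound uniform.) Then multiply: $a_\ep b_\ep F(x_\star + x/a_\ep) \sim a_\ep^{1-\lph} b_\ep \, Q(x) = h_\ep Q(x)$ and $\ep^2 a_\ep^2 b_\ep G(x_\star + x/a_\ep) \sim \ep^2 a_\ep^{2-\beta} b_\ep \, V(x) = \ell_\ep V(x)$, with $h_\ep, \ell_\ep$ as defined before \eqref{eq:tF-tG-ratio}.

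Next I would plug in the hypotheses of each case. In case (iii), $1/a_\ep \asymp \ep^{2/(1+\lph-\beta)}$ is precisely $(1/a_\ep)^{1+\lph-\beta} \asymp \ep^2$, i.e. $h_\ep/\ell_\ep \asymp 1$; combined with $b_\ep \asymp a_\ep^{\lph-1}$ (which gives $h_\ep \asymp 1$) we get $\ell_\ep \asymp 1$ as well, so both limits in \eqref{eq:QD-coeff} are non-trivial and equal positive constants times $Q$ and $V$. Case (ii): $b_\ep \asymp a_\ep^{\lph-1}$ again gives $h_\ep \asymp 1$, hence $\tF \propto Q$, while $1/a_\ep \gg \ep^{2/(1+\lph-\beta)}$ forces $h_\ep/\ell_\ep \to \infty$, so $\ell_\ep \to 0$ and $\tG = 0$. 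Case (i): $b_\ep \asymp \ep^{-2} a_\ep^{\beta-2}$ gives $\ell_\ep \asymp 1$ hence $\tG \propto V$, and $1/a_\ep \ll \ep^{2/(1+\lph-\beta)}$ forces $h_\ep/\ell_\ep \to 0$ so $h_\ep \to 0$ and $\tF = 0$. Throughout, strong stochasticity guarantees $\lph \ge \beta$, so $1+\lph-\beta > 0$ and the exponent $2/(1+\lph-\beta)$ is a well-defined positive number, making the three cases exhaustive (given $a_\ep \to \infty$) and the manipulation $1/a_\ep \lessgtr \ep^{2/(1+\lph-\beta)} \Leftrightarrow (1/a_\ep)^{1+\lph-\beta}\lessgtr \ep^2$ legitimate.

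The main obstacle, and the only place requiring genuine care, is the \emph{locally uniform} convergence in the first step: the pointwise statement \eqref{eq:FG-hom-shape} is immediate from \eqref{eq:FG-hom}, but Lemma \ref{lem:QDP-QD} demands uniformity on compacts $K \subset \tld U$, and one must handle the points of $K$ where $Q$ or $V$ vanishes (there the ratio $F(x_\star + x/a_\ep)/(1/a_\ep)^\lph$ need not converge to $Q(x)$ in a naive ratio sense, but does converge — to $0$ — once one writes the remainder estimate correctly). I would dispatch this by covering $K$ by finitely many sets, using homogeneity of $Q,V$ to reduce to uniformity on $\{|x|=1\} \cap (\text{cone of }\tld U)$ together with the scaling $x \mapsto x/a_\ep$, and invoking the definition of $\sim$ in \eqref{eq:FG-hom} with the error term made uniform on compact angular sectors — a standard compactness argument. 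Everything after that is bookkeeping with $\asymp, \ll, \gg$.
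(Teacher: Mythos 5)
Your proposal is correct and follows essentially the same route as the paper, which derives the theorem directly from the shape/ratio/time-scale discussion preceding it together with Lemma \ref{lem:QDP-QD}. Your extra attention to the locally uniform convergence in \eqref{eq:QD-coeff} (including the points where $Q$ or $V$ vanish) is a reasonable filling-in of a step the paper leaves implicit.
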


We take a moment to name the different cases in Theorem \ref{thm:iso-limits}.

\begin{definition}[limit ranges]\label{def:lim-scales}
In the context of Theorem \ref{thm:iso-limits}, say that $(a_\ep),(b_\ep)$ is a \emph{QD limit scale} for $(x_\ep)$ if it satisfies one of the three sets of conditions above. Define the three \emph{limit ranges} as follows:
\enumrom
\item $1/a_\ep \ll \ep^{2/(1+\lph-\beta)}$ is the \emph{pure diffusive range},
\item $1/a_\ep \gg \ep^{2/(1+\lph-\beta)}$ is the \emph{deterministic range}, and
\item $1/a_\ep \asymp \ep^{2/(1+\lph-\beta)}$ is the \emph{drift-diffusion (dd) scale}.
\enumend

\end{definition}

Theorem \ref{thm:iso-limits} tells a nice story. When we assume strong stochasticity, characteristics scale in such a way that diffusion dominates at smaller scales, while drift dominates at larger scales. The dd scale separates the two regimes and is also the largest scale at which, from the fixed vantage point $x_\star$, random fluctuations are not drowned out by the deterministic flow. Moreover, once the spatial scale $(1/a_\ep)$ has been chosen, there is a unique choice of time scale $(b_\ep)$, modulo multiplication by a non-zero constant, that leads to a non-trivial limit process; on any faster (shorter) time scale, no change would be observed, while on any slower (longer) time scale the change would be instantaneous, as either the drift or diffusion coefficient would be divergent.

\section{Limit scales for parametrized QDPs}\label{sec:prmtzd}

In this section we extend the analysis of Section \ref{sec:iso} to the case where $F,G$ also depend on a parameter $\lbd\in \R$. In other words, we consider a family of processes $(x_\ep(t;\lbd))$ indexed by $\ep$ and $\lbd$, with corresponding characteristics $F(\cdot,\lbd),G(\cdot,\lbd)$, that we wish to study in the neighbourhood of a point $(x_\star,\lbd_\star)$. We will study two kinds of limits:
\enumrom
\item \textit{Constant branch:} $Y_\ep(t;\lbd_\ep):= a_\ep( x_\ep(b_\ep t;\lbd_\ep)-x_\star)$, and
\item \textit{Non-constant branch:} $Y_\ep(t;\lbd_\ep):= a_\ep( x_\ep(b_\ep t;\lbd_\ep)-x_\star(\lbd_\ep))$, \\ for some function $x_\star(\lbd)$ satisfying $x_\star(\lbd_\star)=x_\star$.
\enumend
In both cases the goal is to find $(a_\ep),(b_\ep)$ and $(\lbd_\ep)$ with $a_\ep \to \infty$ and $\lbd_\ep \to \lbd_\star$ such that the rescaled process is a quasi-diffusion. For simplicity, we consider only a single spatial dimension, i.e., $x \in \R$. Instead of the homogeneous assumption \eqref{eq:FG-hom} on $F,G$, we'll assume existence of a Taylor expansion in $x$ and $\lbd$ around $(x_\star,\lbd_\star)$, then partition the $o(1)$ neighbourhood of $(x_\star,\lbd_\star)$ into regions where $F,G$ have a limiting shape with respect to $x$. We begin by precisely defining a parametrized QDP, and translate Lemma 3.6 to fit that context. 

\begin{definition}[Parametrized QDP]\label{def:prmtrzdQDP}
Let $\lbd$ be a parameter taking values in an interval $I\subset \R$. For each $\lbd \in I$, let $U,F(\,\cdot\,,\lbd),G(\,\cdot\,,\lbd),\E,(a_\ep),(b_\ep),(D_\ep)$ be as in Definition \ref{def:QDP}, let $(x_\ep(t;\lbd))_{\ep \in \E,\,\lbd \in I,\,t\ge 0}$ be a collection of semimartingales, and fix a sequence $(\lbd_\ep)$. Then $(x_\ep(t,\lbd_\ep))$ is a parametrized QDP on $(D_\ep)$ to order $(a_\ep)$ on time scale $(b_\ep)$, with characteristics $F,G$, if the estimates of Definition \ref{def:QDP} hold with $x_\ep(t;\lbd_\ep)$, $F(\,\cdot,\,\lbd_\ep)$ and $G(\,\cdot,\,\lbd_\ep)$ in place of $x_\ep$, $F$ and $G$. 
\end{definition}

The following trivial corollary of Lemma \ref{lem:QDP-QD} gives conditions for a parametrized QDP to be a QD.

\begin{corollary}\label{cor:QDP-QD}
Let $x_\star,\tld U$ be as in Lemma \ref{lem:QDP-QD} and suppose that for each domain $D\subset\subset \tld U$, $(x_\ep(t;\lbd_\ep))$ is a parametrized QDP on $(D_\ep):=(\{x_\star+x/a_\ep\colon x \in D\})$ to scale $(a_\ep),(b_\ep)$, with characteristics $F,G$, and that the following limits exist for every $x \in \tld U$:
\begin{align}\label{eq:FG-par-lim}
\tF(x) &:= \lim_{\ep \to 0}a_\ep b_\ep F(x_\star + x/a_\ep,\lbd_\ep) \ \text{and} \nonumber \\
\tG(x) &:= \lim_{\ep \to 0}\ep^2 a_\ep^2 b_\ep G(x_\star + x/a_\ep,\lbd_\ep),
\end{align}
with uniform convergence on compact subsets of $\tld U$. Then the process $(X_\ep)$ defined below is a QD with characteristics $\tld F,\tld G$:
\begin{align*}
X_\ep(t) =a_\ep(x_\ep(b_\ep t;\lbd_\ep)-x_\star).
\end{align*}
\end{corollary}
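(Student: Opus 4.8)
The plan is to observe that Corollary \ref{cor:QDP-QD} is an immediate specialization of Lemma \ref{lem:QDP-QD}, so the proof amounts to checking that the hypotheses of the corollary translate verbatim into the hypotheses of the lemma, with $F(\cdot,\lbd_\ep)$ and $G(\cdot,\lbd_\ep)$ playing the role of the (unparametrized) characteristics $F$ and $G$. First I would fix the sequence $(\lbd_\ep)$ once and for all, and for each $\ep$ set $\bar F_\ep := F(\cdot,\lbd_\ep)$ and $\bar G_\ep := G(\cdot,\lbd_\ep)$. The subtlety relative to the unparametrized case is that the drift and diffusion coefficients now vary with $\ep$; however, inspecting the proof of Lemma \ref{lem:QDP-QD}, one sees it never uses that $F,G$ are independent of $\ep$ — it only uses the defining estimates (i)--(iii) of Definition \ref{def:QDP} for the process $x_\ep$ against the functions appearing there, together with the locally uniform convergence of $a_\ep b_\ep F(x_\star + \cdot/a_\ep)$ and $\ep^2 a_\ep^2 b_\ep G(x_\star + \cdot/a_\ep)$. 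So strictly the cleanest route is to note that Lemma \ref{lem:QDP-QD} holds, with the same proof word for word, if the fixed functions $F,G$ are replaced by $\ep$-dependent families $\bar F_\ep, \bar G_\ep$, provided the QDP estimates hold against $\bar F_\ep,\bar G_\ep$ and the rescaled versions converge locally uniformly to some limiting $\tF,\tG$.

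Concretely, the steps are: (1) By Definition \ref{def:prmtrzdQDP}, the hypothesis that $(x_\ep(t;\lbd_\ep))$ is a parametrized QDP on $(D_\ep)$ to scale $(a_\ep),(b_\ep)$ with characteristics $F,G$ means precisely that the estimates (i)--(iii) of Definition \ref{def:QDP} hold with $x_\ep(\cdot\,;\lbd_\ep)$, $\bar F_\ep$, $\bar G_\ep$ in place of $x_\ep$, $F$, $G$; (2) the convergence hypothesis \eqref{eq:FG-par-lim} is exactly the statement that $a_\ep b_\ep \bar F_\ep(x_\star + x/a_\ep) \to \tF(x)$ and $\ep^2 a_\ep^2 b_\ep \bar G_\ep(x_\star + x/a_\ep) \to \tG(x)$, uniformly on compact subsets of $\tld U$ — which is the form of \eqref{eq:QD-coeff} with $\bar F_\ep,\bar G_\ep$ in place of $F,G$; (3) now replay the computation in the proof of Lemma \ref{lem:QDP-QD} verbatim: the jump bound $J^c X_\ep(t) = a_\ep J^{c/a_\ep} x_\ep(b_\ep t; \lbd_\ep)$ gives condition (i) of Definition \ref{def:qd}; the linearity of $X\mapsto X^p$ and the degree-2 homogeneity of $X\mapsto \lng X^m\rng$ give the rescaling identities for compensator and pqv; and the change-of-variables identities for the pathwise integrals go through with $\bar F_\ep,\bar G_\ep$ in place of $F,G$ (one simply carries the subscript $\ep$ through, noting that on the relevant time interval one integrates $\bar F_\ep(x_\ep(s;\lbd_\ep))$, not a mixture of different $\lbd$ values); (4) the locally uniform convergence in (2) then bounds the difference between $\int_0^t \tF(X_\ep(s))\,ds$ and $a_\ep b_\ep \int_0^t \bar F_\ep(x_\star + X_\ep(s)/a_\ep)\,ds$ on $\{t \le \tau(D,\ep)\wedge T\}$, since $X_\ep$ is confined to the compact $\ol D$ there, and likewise for the diffusion term; combining with the QDP estimates gives conditions (ii) and (iii) of Definition \ref{def:qd}. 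Hence $(X_\ep)$ is a QD with characteristics $\tF,\tG$ on $\tld U$.

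I do not expect any genuine obstacle here — this is, as the paper says, a ``trivial corollary''. The only thing one must be slightly careful about is bookkeeping: making explicit that Definition \ref{def:prmtrzdQDP} already absorbed the parameter into the characteristics, so that Lemma \ref{lem:QDP-QD}'s proof applies with the substitution $F \rightsquigarrow F(\cdot,\lbd_\ep)$, $G \rightsquigarrow G(\cdot,\lbd_\ep)$ without any structural change. If one wanted to be maximally economical, one could even state and prove an ``$\ep$-dependent characteristics'' version of Lemma \ref{lem:QDP-QD} first and then read off both Lemma \ref{lem:QDP-QD} and Corollary \ref{cor:QDP-QD} as instances; but since the paper presents the corollary after the lemma, the natural write-up is the short paragraph ``apply Lemma \ref{lem:QDP-QD} with $F(\cdot,\lbd_\ep),G(\cdot,\lbd_\ep)$ in place of $F,G$, observing that its proof uses nothing about the $\ep$-independence of the characteristics,'' followed by a one-line check that the hypotheses match.
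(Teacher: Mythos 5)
Your proposal is correct and matches the paper's proof, which simply states that the argument of Lemma \ref{lem:QDP-QD} goes through verbatim with $x_\ep(\cdot;\lbd_\ep)$, $F(\cdot,\lbd_\ep)$ and $G(\cdot,\lbd_\ep)$ in place of $x_\ep$, $F$ and $G$. Your additional bookkeeping — checking that the proof of the lemma nowhere uses the $\ep$-independence of the characteristics — is exactly the right observation and is all that is needed.
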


\begin{proof}
The proof is identical to the proof of Lemma \ref{lem:QDP-QD}, except with $x_\ep(\cdot;\lbd_\ep),F(\cdot,\lbd_\ep)$ and $G(\cdot,\lbd_\ep)$ in place of $x_\ep,F$ and $G$.
\end{proof}

To find $(a_\ep),(b_\ep),(\lbd_\ep)$ such that $F,G$ can satisfy \eqref{eq:FG-par-lim}, as in Section \ref{sec:iso} we use the three-step philosophy of shape, drift to diffusion ratio, and time scale. We begin with case (i), the constant branch, then treat case (ii) in Section \ref{sec:float-equil}. Since $F,G$ now depend on two variables, the step that will occupy the most effort is shape. We shall begin the discussion in the same manner as Section \ref{sec:iso}, then take a theoretical detour to address shape before returning to address the remaining two steps. Since the notation gets more complex, we'll mostly use $(x_\star,\lbd_\star)=(0,0)$ to formulate results.\\

\nid\textit{Shape.} Working back from \eqref{eq:FG-par-lim} with $(x_\star,\lbd_\star)=(0,0)$ and defining $f_\ep=1/(a_\ep b_\ep)$ and $g_\ep = 1/(\ep^2 a_\ep^2 b_\ep)$,
\begin{align}\label{eq:FG-flip-lim}
F(x/a_\ep, \lbd_\ep) &\sim f_\ep \,\tF(x) \quad \text{and} \nonumber \\
G(x/a_\ep, \lbd_\ep) &\sim g_\ep \,\tG(x).
\end{align}
In other words, the space and parameter scales $(1/a_\ep,\lbd_\ep)$ are such that on that scale, $F,G$ are asymptotically the product of a function of $\ep$ with a function of $x$. Since we now have a free variable $\lbd$, it is no longer necessary that $F,G$ be locally homogeneous. Instead, the correct generalization to this context is that $(1/a_\ep,\lbd_\ep)$ be a limit scale for $F$ and $G$, in the following sense.

\begin{definition}[limit scale]\label{def:homscale}
Let $f$ be defined in a neighbourhood of $(0,0)$. Say that a sequence $(x_n,\lbd_n)$ with limit $(0,0)$ is a limit scale for $f$ if there are functions $v,w$, called the scale functions, such that $f(ux,\lbd)\sim v(x,\lbd)w(u)$ as $n\to\infty$, locally uniformly with respect to $u \in (0,\infty)$.
\end{definition}
If $(1/a_\ep,\lbd_\ep)$ is a limit scale for both $F$ and $G$, then letting $v_F,w_F$ and $v_G,w_G$ denote the respective scale functions and rearranging, \eqref{eq:FG-flip-lim} gives
\begin{align}\label{eq:tF-tG-ratio2}
\tF(x) \sim h_\ep Q(x) \quad \text{and} \quad \tG(x) \sim \ell_\ep V(x),
\end{align}
where, using $f_\ep=1/(a_\ep b_\ep)$ and $g_\ep=1/(\ep^2 a_\ep b_\ep)$,
\begin{align}\label{eq:h-ell}
&Q(x)=w_F(x), \quad V(x)=w_G(x), \nonumber \\
&h_\ep = a_\ep b_\ep v_F(1/a_\ep,\lbd_\ep) \ \ \text{and} \ \ \ell_\ep = \ep^2 a_\ep b_\ep v_G(1/a_\ep,\lbd_\ep).
\end{align}
We will return to these expressions after laying out a method for identifying limit scales and scale functions. To get warmed up, suppose $f$ has a Taylor expansion around $(0,0)$ in the sense that
\begin{align}\label{eq:loTaylor}
f(x,\lbd) \sim \sum_{\lph \in A}c_\lph x^{\lph_1}\lbd^{\lph_2} \ \ \text{as} \ \ |x|+|\lbd| \to 0
\end{align}
for some finite set of powers $A\subset \N^2$ and non-zero coefficients $(c_\lph)_{\lph \in A}$. Then any sequence $(x_n,\lbd_n)$ tending to $(0,0)$ along a curve of the form $x=\lbd^m$ for some $m>0$ is a limit scale for $f$, since if $x=\lbd^m$ then letting $s(\lph,m) = m \lph_1+\lph_2$, $s(A,m) = \min \{m\lph_1+\lph\colon \lph \in A\}$ and $A(m) = \{\lph \in A\colon s(\lph,m) = s(A,m)$, for $u\in (0,\infty)$, as $x\to 0$,
$$f(ux,\lbd) \sim \sum_{\lph \in A}c_\lph u^{\lph_1}\lbd^{m \lph_1 + \lph_2} \sim \lbd^{s(A,m)}\sum_{\lph \in A(m)}c_\lph u^{\lph_1},$$
with uniform convergence over $u$ in compact subsets of $(0,\infty)$. By studying the structure of the sets $(A(m))_{m\in (0,\infty)}$ we can identify all the limit scales for $f$ satisfying \eqref{eq:loTaylor}. We begin with some simple theory of partially ordered sets.

\subsection{Envelope of a partially ordered subset of $\N^2$}\label{sec:env}

Let $(S,\le)$ be a partially ordered set. Elements $a,b$ are incomparable if neither $a\le b$ nor $b\le a$ is true, or equivalently, if $a \ne b$ and neither $a<b$ nor $b<a$ is true. A subset $A\subset S$ is an anti-chain (I prefer the term disordered) if $a,b \in A$ and $a\le b$ implies $a=b$, or equivalently, if all distinct pairs of elements in $A$ are incomparable. If $x \in A$ then $x$ is minimal in $A$ if $y<x$ implies $y\notin A$. For any $A$, the set $\min(A):= \{x\in A\colon x \ \text{is minimal in} \ A \}$ is disordered. If $\min(A)\subset B\subset A$ then $\min(B)=\min(A)$. If $A$ is disordered then $A=\min(A)$. If $A$ is disordered and and $B\subset A$ then $B$ is disordered.\\

Define the partially ordered set $(\N^2,\le)$ by $(i,j) \le (k,\ell)$ if $i\le k$ and $j \le \ell$.

\begin{lemma}
If $A\subset \N^2$ is disordered, then it is finite.
\end{lemma}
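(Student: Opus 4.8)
The plan is to show that any disordered (anti-chain) subset $A \subset \N^2$ is finite by projecting onto the first coordinate. First I would note that if $A$ is empty the claim is trivial, so assume $A \ne \emptyset$. The key observation is that two distinct elements $(i,j), (k,\ell) \in A$ cannot have the same first coordinate: if $i = k$, then either $j \le \ell$ or $\ell \le j$, which would force $(i,j) \le (k,\ell)$ or $(k,\ell) \le (i,j)$, contradicting disorderedness. Hence the projection $\pi_1 : A \to \N$, $(i,j) \mapsto i$, is injective, and likewise $\pi_2$. So it suffices to bound $\pi_1(A)$.

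The next step is to bound the first coordinate. Pick any element $(i_0, j_0) \in A$. For any other $(i,j) \in A$, incomparability with $(i_0,j_0)$ rules out both $(i,j) \le (i_0,j_0)$ and $(i_0,j_0) \le (i,j)$. In particular we cannot have both $i \ge i_0$ and $j \ge j_0$. Combined with the injectivity of $\pi_2$: for each element $(i,j) \in A$ with $i > i_0$, we must have $j < j_0$, so its second coordinate lies in the finite set $\{0, 1, \dots, j_0 - 1\}$; since $\pi_2$ is injective, there are at most $j_0$ such elements. Similarly there are at most $i_0$ elements with $i < i_0$ (their first coordinates lie in $\{0,\dots,i_0-1\}$ and $\pi_1$ is injective), plus the single element $(i_0,j_0)$ itself. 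Therefore $|A| \le i_0 + j_0 + 1 < \infty$.

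I do not expect any genuine obstacle here; the statement is the standard fact that $(\N^2, \le)$ is a well-quasi-order (a special case of Dickson's lemma), and the only thing to be careful about is organizing the counting so that every element of $A$ is accounted for exactly once. An alternative, perhaps cleaner, route would be to argue by contradiction: an infinite $A$ would contain an infinite sequence, and by repeatedly extracting subsequences on which the first and then the second coordinate is monotone non-decreasing (possible since $\N$ is well-ordered), one obtains two comparable elements, contradicting that $A$ is an anti-chain. Either argument is short; I would present the direct counting bound since it also yields the explicit estimate $|A| \le i_0 + j_0 + 1$ for any choice of base point $(i_0,j_0) \in A$.
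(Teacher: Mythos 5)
Your proof is correct and follows essentially the same argument as the paper: fix a base point, note that incomparability forces every other element to drop in one coordinate, and use injectivity of each coordinate projection to bound the count by $i_0+j_0+1$. The paper organizes the same dichotomy slightly differently but arrives at the identical bound.
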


\begin{proof}
Suppose $(j,k) \in A$. If $(\ell,m)\in A$ and $(\ell,m) \ne (j,k)$ then either $\ell<j$ or $m<k$. Moreover, if $(\ell,m) \in A$ then $(\ell,m') \notin A$ for all $m' \ne m$ and $(\ell',m) \notin A$ for all $\ell' \ne \ell$, so for each $\ell \in \{0,\dots,j-1\}$ there is at most one $m$ such that $(\ell,m) \in A$, and for each $m \in \{0,\dots,k-1\}$ there is at most one $\ell$ such that $(\ell,m) \in A$. In other words, in addition to $(j,k)$, $A$ contains at most $j+k$ other elements.
\end{proof}

If $A \subset \N^2$ is disordered then since $(\ell,m) \in A \Rightarrow (\ell',m),(\ell,m') \notin A$ for any $\ell'\ne \ell$ or $m' \ne m$, so $A$ can be arranged in increasing or decreasing order of either the first or second coordinate. If $\lph,\lph'$ are incomparable then $\sgn(\lph_1-\lph_1')=-\sgn(\lph_2-\lph_2')$, so if $A$ is arranged as $\lph^{(1)},\dots,\lph^{(n)}$ with $\lph_1^{(1)} < \dots < \lph_1^{(n)}$, then $\lph_2^{(1)} > \dots > \lph_2^{(n)}$.\\

For $(x,y)\in \R^2$ and $m\in (0,\infty)$ let $s((x,y),m) = mx + y$. A set $A\subset \N^2$ is an envelope if, for each $\lph \in A$, there is $m\in (0,\infty)$ such that $s(\lph,m) \le s(\lph',m)$ for all $\lph' \in A$. Envelopes are disordered, since if $(k,\ell) \le (i,j)$ then letting $m$ correspond to $(i,j)$, $mi+j\le mk+\ell$ which together imply that $(k,\ell)=(i,j)$. Not all disordered sets are envelopes. For example, take $\{(3,0),(2,2),(0,3)\}$, which is disordered. Then $s((3,0),m)<s((2,2),m)$ for $m<2$ while $s((0,3),m)<s((2,2),m)$ for $m>1/2$.\\

For any $A\subset \N^2$ and $m\in (0,\infty)$ let $s(A,m) = \min\{s(\lph,m)\colon \lph \in A\}$ and let $A(m) = \{\lph\in A \colon s(\lph,m)=s(A,m)\}$. Then $\env(A) := \bigcup_{m\in(0,\infty)} A(m) =\{\lph \in A\colon s(\lph,m)=s(A,m) \ \text{for some} \ m \in (0,\infty)\}$ is an envelope, called the envelope of $A$. If $A$ is an envelope then $A=\env(A)$. If $\env(A) \subset B \subset A$ then $\env(B)=\env(A)$. If $\lph' < \lph$ then $s(\lph',m) < s(\lph,m)$, so if $s(\lph,m) = s(A,m)$ then $\lph \in \min(A)$. In particular, $\env(A) \subset \min(A)$. Since $\min(A) \subset A$ it follows that $\env(\min(A)) = \env(A)$. A visual example of a set $A$ together with $\min(A)$ and $\env(A)$ is given in Figure \ref{fig:set-min-env}.\\
.
\begin{figure}
\centering
\includegraphics[width=1.5in]{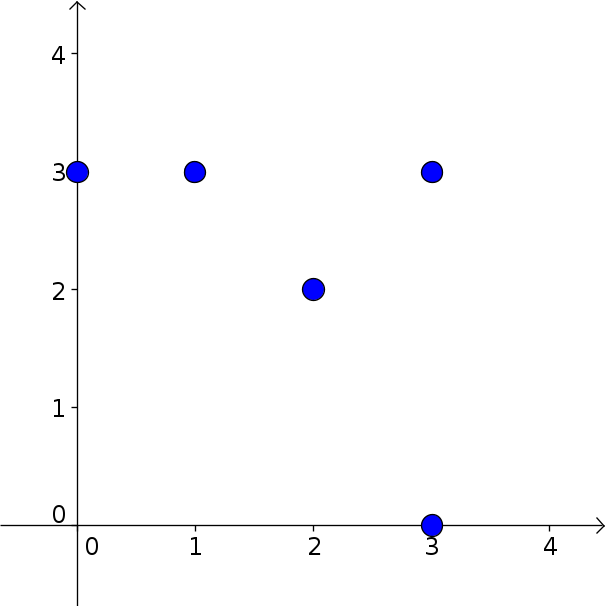}
\hspace{.4in}
\includegraphics[width=1.5in]{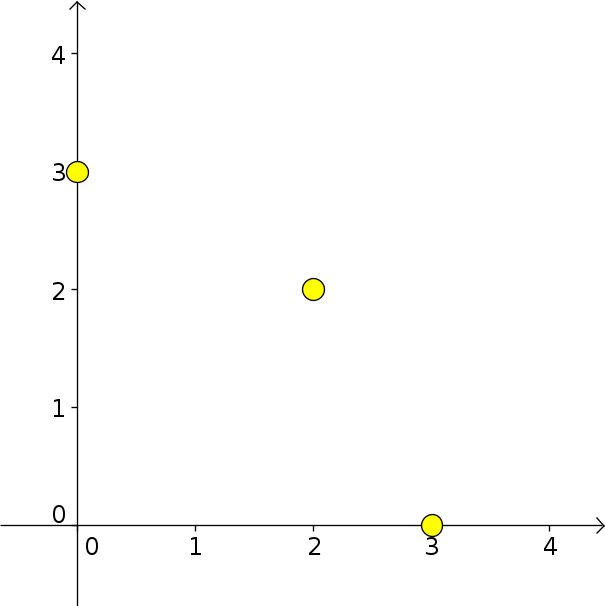}
\hspace{.4in}
\includegraphics[width=1.5in]{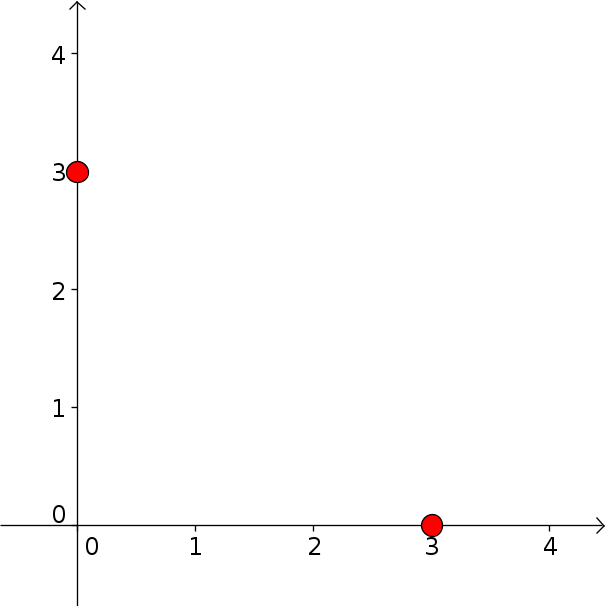}
\caption{The set $A=\{(3,0),(2,2),(0,3),(1,3),(3,3)\}$ in blue, with $\min(A)$ in yellow and $\env(A)$ in red.}
\label{fig:set-min-env}
\end{figure}

To understand $\env(A)$ we examine the sets $(A(m))_{m\in (0,\infty)}$, which are explained by the following lemma.

\begin{lemma}\label{lem:envelope}
Let $A\subset \N^2$ and let $(A(m))_{m\in (0,\infty)}$ be as above. Define the pivot slopes and pivots of $A$ by
$$M(A):= \{m \in (0,\infty) \colon |A(m)| \ge 2\}$$ and
$$\piv(A):= \{\lph \in A\colon A(m)=\{\lph\} \ \text{for  some} \ m \in (0,\infty)\}.$$
Then $M(A)$ is a finite set $0<m_1<\dots<m_n<\infty$, and  letting $m_0:=0$ and $m_{n+1}:=\infty$, $\piv(A) = \{\lph(i)\}_{i=0}^n$, where $A(m) = \lph(i)$ for all $m\in (m_i,m_{i+1})$, and $\lph(i-1),\lph(i) \in A(m_i)$ for each $i$. In addition,
\itemgo
\item $i\mapsto \lph_1(i)$ is decreasing,
\item $\lph_1(i) \le \lph_1 \le \lph_1(i-1)$ for $\lph \in A(m_i)$,
\item $\lph(0) = \arg\max\{\lph_1\colon (\lph_1,\lph_2) \in \min(A)\}$, and
\item $\lph(n) = \arg\min\{\lph_1\colon (\lph_1,\lph_2) \in \min(A)\}$.
\itemend

Finally, for each $m\in (0,\infty)$ there is $\lph \in \piv(A)$ such that $s(A,m)=s(\lph,m)$.
\end{lemma}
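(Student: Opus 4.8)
The organizing idea is to read $m \mapsto s(A,m) = \min_{\lph\in A} s(\lph,m)$ as the lower envelope of a finite family of affine functions of $m>0$, and to extract every assertion from the geometry of that concave, piecewise-affine curve. Two reductions come first. We may assume $A\neq\emptyset$, since otherwise $\piv(A)=\emptyset$ and nothing is claimed. And since for $m>0$ the map $\lph\mapsto s(\lph,m)=m\lph_1+\lph_2$ is strictly order-preserving on $\N^2$, every minimizer of $s(\cdot,m)$ over $A$ already lies in $\min(A)$; hence $A(m)=(\min A)(m)$ and $s(A,m)=s(\min A,m)$ for all $m$, and since $\min(\min A)=\min A$, every object in the lemma (the sets $A(m)$, $M(A)$, $\piv(A)$, and the occurrences of $\min(A)$ in (iii)--(iv)) is unchanged on replacing $A$ by $\min(A)$. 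So I may assume $A=\min(A)$, which is disordered, hence finite by the preceding lemma; as noted before the lemma, $A$ then lists as $\lph^{(1)},\dots,\lph^{(k)}$ with strictly increasing first coordinates and strictly decreasing second coordinates.

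Then I would set up the envelope $g:=s(A,\cdot)$. Each $s(\lph,\cdot)$ is affine with slope $\lph_1$, and distinct elements of the disordered set $A$ have distinct first coordinates, so these slopes are pairwise distinct; any two of the lines meet at a single value, which is positive because along $A$ the two coordinates are oppositely ordered. Consequently $g$ is concave and piecewise-affine: each line $s(\lph,\cdot)$ realizes the minimum on a (possibly empty) closed subinterval of $(0,\infty)$, these finitely many intervals cover $(0,\infty)$ and overlap only at their endpoints, and $g$ has finitely many corners. I would then show the corner set equals $M(A)$: at a corner the envelope is carried successively by two distinct lines, both attaining the minimum there, so $|A(m)|\ge 2$; conversely, on the interior of one of the above subintervals $g$ coincides with a single $s(\lph,\cdot)$, and any further minimizer $\psi$ at such a point would give a line equal to $g$ at the point and $\ge g$ nearby, forcing $s(\psi,\cdot)=s(\lph,\cdot)$ and hence $\psi=\lph$. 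So $M(A)$ is finite; write it $0<m_1<\dots<m_n<\infty$, put $m_0=0$, $m_{n+1}=\infty$, and let $\lph(i)$ be the unique minimizer on $(m_i,m_{i+1})$. Adjacent $\lph(i)$ are distinct (else $g$ would be affine across $m_i$), concavity gives $\lph_1(i-1)\ge\lph_1(i)$, and equality is impossible since it would merge the two pieces at $m_i$; this yields (i), shows the $\lph(i)$ are distinct, gives $\piv(A)=\{\lph(i)\}_{i=0}^n$ (a singleton $A(m)$ occurs exactly for $m\notin M(A)$, where it is $\{\lph(i)\}$), and exhibits $\lph(i-1),\lph(i)\in A(m_i)$ as the left- and right-hand pieces at that corner.

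The remaining items follow quickly. For (ii): any $\lph\in A(m_i)$ gives a line through $(m_i,g(m_i))$ lying above $g$, whose slope $\lph_1$ therefore lies between the right slope $\lph_1(i)$ and the left slope $\lph_1(i-1)$ of $g$ at $m_i$. For (iii): as $m\to 0^+$, $s(\lph,m)\to\lph_2$, so the minimizer for all small $m$ is the unique element of $A$ of least second coordinate, which, the coordinates being oppositely ordered, is $\arg\max\{\lph_1:\lph\in A\}$, i.e. $\lph(0)$; and for (iv), $s(\lph,m)/m\to\lph_1$ as $m\to\infty$ gives $\lph(n)=\arg\min\{\lph_1:\lph\in A\}$. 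Finally, the closing statement: given $m>0$, either $m\notin M(A)$, so $A(m)=\{\lph(i)\}$ with $\lph(i)\in\piv(A)$ and $s(\lph(i),m)=s(A,m)$; or $m=m_i$, so $\lph(i)\in A(m_i)\cap\piv(A)$ and again $s(\lph(i),m)=s(A,m)$ --- in either case some pivot attains $s(A,m)$.

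The step I expect to be the main obstacle is the identification of $M(A)$ with the corner set of $g$ together with $\piv(A)=\{\lph(i)\}_{i=0}^n$: one must verify that no extra minimizer can tie at a non-corner and that every pivot is realized by a genuine affine piece. Once the concave piecewise-affine picture is in place the rest, including the closing statement, is bookkeeping.
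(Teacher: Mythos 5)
Your proof is correct and follows essentially the same route as the paper's: both identify $M(A)$ with the finitely many positive crossing points of the lines $m\mapsto s(\lph,m)$ for incomparable pairs, show $A(m)$ is a constant singleton on each interval between consecutive crossings, and read off (i)--(iv) from the ordering of the slopes $\lph_1$ and the limits $m\to 0^+$, $m\to\infty$. Your packaging via the concave piecewise-affine lower envelope $g=s(A,\cdot)$ (after reducing to $A=\min(A)$) is a cosmetic reorganization of the paper's pairwise-comparison bookkeeping rather than a different argument.
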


\begin{proof}
If $\lph,\lph'$ are incomparable then $m(\lph,\lph'):=(\lph_1-\lph_1')/(\lph_2'-\lph_2) \in (0,\infty)$, so 
\enumrom
\item $s(\lph,m)=s(\lph',m) \Lra m(\lph_1-\lph_1') = \lph_2'-\lph_2 \Lra m=m(\lph,\lph')$, and
\item if $\lph_1<\lph_1'$ then $s(\lph,m)<s(\lph',m) \Lra m>m(\lph,\lph')$.
\enumend
Using (ii), $M(A) \subset \{m(\lph,\lph') \colon \lph,\lph' \in A, \ \lph \ne \lph'\}$, so $M(A)$ is finite. Since $m\mapsto s(\lph,m)$ is continuous for each $\lph$ and $\env(A)$ is finite, using (i),
$$\{m\in (0,\infty)\colon A(m)=\{\lph\}\} = \bigcap_{\lph' \in \env(A)\setminus \lph}\{m\in (0,\infty)\colon s(\lph,m)<s(\lph',m)\}$$
is an open interval and similarly, using (i)-(ii), $\{m\in (0,\infty)\colon \lph \in A(m)\}$ is a relatively closed interval in $(0,\infty)$. Let $\lph$ be such that $A(m)=\lph$ for some $m$, and let $(\ul m, \ol m)=\{m\colon A(m)=\lph\}$. Then
\itemgo
\item either $\ul m=0$, or $\lph \in A(\ul m)$ and $\ul m \in M(A)$, and
\item either $\ol m=\infty$, or $\lph \in A(\ol m)$ and $\ol m \in M(A)$.
\itemend
In all cases $(\ul m, \ol m ) = (m_i,m_{i+1})$ for some $i \in \{0,\dots,n\}$. In particular, $A(m)$ is constant on each set $(m_i,m_{i+1})$, and letting $\lph(i)$ be its unique element, $\lph(i-1),\lph(i) \in A(m_i)$. If $\lph,\lph' \in A(m_i)$ and $\lph \ne \lph'$ then $m(\lph,\lph')=m_i$. If moreover $\lph_1<\lph_1'$ then from (ii) above,
\itemgo
\item $s(\lph,m)<s(\lph',m) \Lra m>m_i$ so $\lph' \ne \lph(i)$, and
\item $s(\lph',m)<s(\lph,m) \Lra m<m_i$ so $\lph \ne \lph(i-1)$.
\itemend
Since $\lph(i-1),\lph(i) \in A(m_i)$, it follows that
\itemgo
\item $\lph(i-1) = \arg\max\{\lph_1\colon (\lph_1,\lph_2)\in A(m_i)\}$ and
\item $\lph(i) = \arg\min\{\lph_1\colon (\lph_1,\lph_2) \in A(m_i)\}$.
\itemend
In particular, $\lph(i-1) \ne \lph(i)$, and $\lph_1(i-1)>\lph_1(i)$. Let $\lph = \arg\max\{\lph_1\colon (\lph_1,\lph_2\in \min(A)\}$. Then for $\lph' \in \min(A)\setminus \{\lph\}$ and small $m$, $s(\lph,m)<s(\lph',m)$, so $s(\lph,m)=s(A,m)$ for small $m$ which means that $\lph=\lph(0)$. Similarly one can show that $\lph(n)=\arg\min\{\lph_1\colon(\lph_1,\lph_2)\in\min(A)\}$. To prove the last statement -- that for each $m$ there is $\lph\in \piv(A)$ such that $s(\lph,m)=s(A,m)$ -- simply note that for each $m$, $\piv(A)\cap A(m) \ne \emptyset$.
\end{proof}

There is a nice visual interpretation of Lemma \ref{lem:envelope}. Define the set of lines $(L(A,m))_{m\in [0,\infty]}$ as follows:
\itemgo
\item $L(A,0) := \{(x,y) \in \R^2 \colon y = \min\{\lph_2\colon(\lph_1,\lph_2) \in A\}\}$,
\item $L(A,\infty) := \{(x,y) \in \R^2 \colon x = \min\{\lph_1\colon(\lph_1,\lph_2) \in A\}\}$, and
\item for $m\in (0,\infty)$, $L(A,m):= \{(x,y) \in \R^2\colon s((x,y),m) = s(A,m)\}$.
\itemend
Then each line $L(A,m)$ is a lower bound and a supporting line of $A$, in the sense that
\itemgo 
\item $L(A,m) \cap A$ is non-empty,
\item if $\lph \in A$ then $\lph \ge \lph'$ for some $\lph' \in L(A,m)$, and
\item if $\lph \in L(A,m)$ and $\lph'<\lph$ then $\lph' \notin A$.
\itemend
By definition, $A(m)= A \cap L(A,m)$ for $m\in (0,\infty)$.  As a function of $m$, $L(A,m)$ begins at $m=0$ as a horizontal line through the lowest points of $A$, finishes at $m=\infty$ as a vertical line through the leftmost points of $A$, and as $m$ increases from $0$ to $\infty$, rotates clockwise around the pivots of $A$, changing pivots when $m$ is equal to a pivot slope. The contour followed by the lines $(L(A,m))$ is the set $L(A)$ defined by
\begin{align}\label{eq:LofA}
L_+(A,m) &= \{(x,y) \in \R^2\colon s((x,y),m) \ge s(A,m)\} \ \text{and} \nonumber \\
L(A) &= \bigcap_{m\in (0,\infty)} L_+(A,m)\cap \bigcup_{m\in [0,\infty]}L(A,m).
\end{align}
Geometrically, $L(A)$ is made up of $n$ line segments, connecting $\lph(i-1)$ to $\lph(i)$ for $i=0,\dots,n-1$, together with the portion of $L(A,0)$ to the right of $\lph(0)$, and the portion of $L(A,\infty)$ above $\lph(n)$. It is the boundary, as well as a kind of set-wise greatest lower bound, of the set $S(A)$ defined by
\begin{align}\label{eq:SofA}
S(A) &= \bigcap_{m\in (0,\infty)}L_+(A,m).
\end{align} 
An example is shown in Figure \ref{fig:set-slopes-contour}.

\begin{figure}
\centering
\includegraphics[width=1.5in]{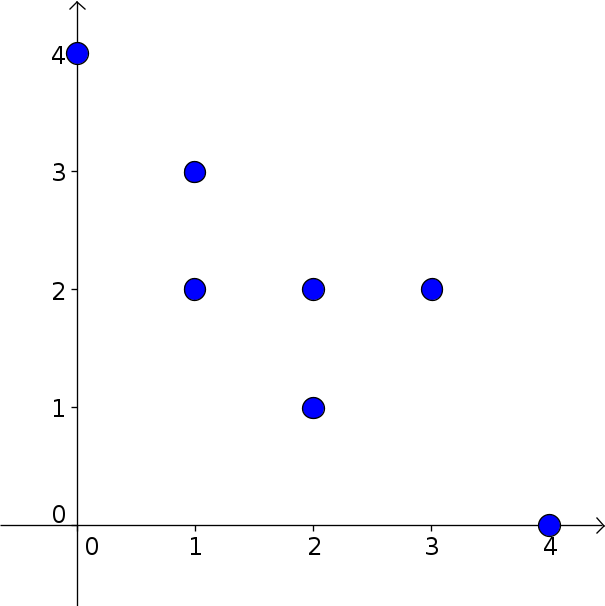}
\hspace{.4in}
\includegraphics[width=1.5in]{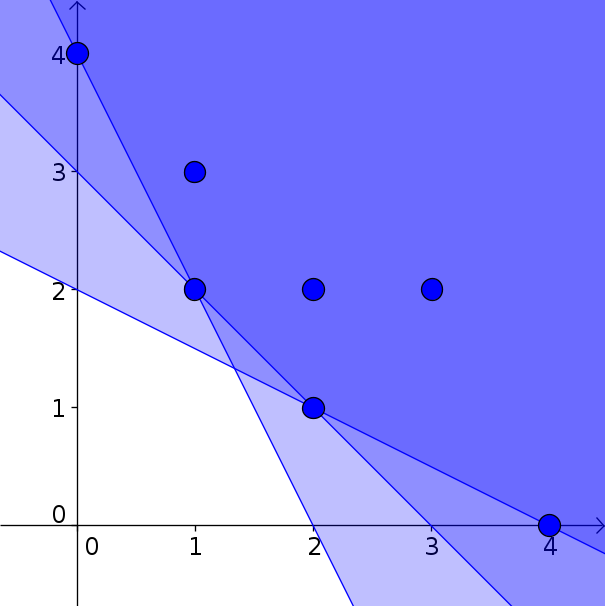}
\hspace{.4in}
\includegraphics[width=1.5in]{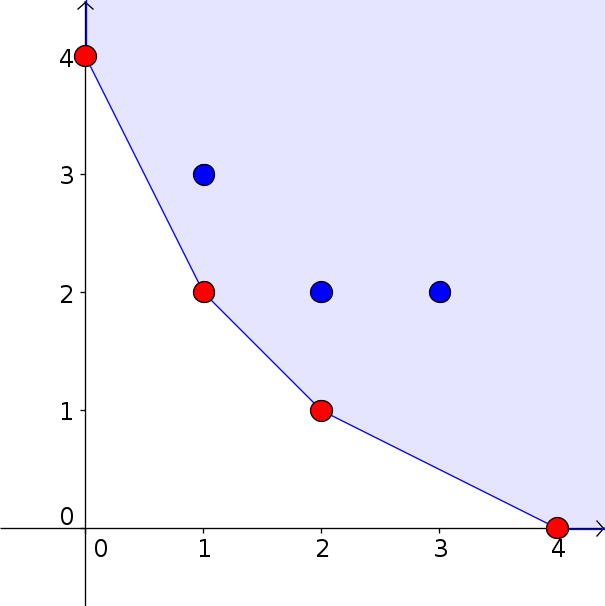}
\caption{The set $A=\{(4,0),(2,1),(1,2),(2,2),(3,3),(1,3),(0,4)\}$ in blue (left), with the lines $L(A,m_i)$ and shaded regions $L_+(A,m_i)$ evaluated at the pivot slopes $m_1=1/2, m_2=1$ and $m_3=2$ (center) and the contour $L(A)$ with the shaded region $S(A)$ and $\env(A)$ higlighted in red (right).}
\label{fig:set-slopes-contour}
\end{figure}

\subsection{Dominant terms and limit scales for $f(x,\lbd)$}\label{sec:domterms}
Next, we show that if $f$ has a Taylor expansion in the sense of \eqref{eq:loTaylor} then we can sum over $\env(A)$ instead of $A$, and we identify the limit scales for $f$ in the sense of Definition \ref{def:lim-scales}, and the form of the product expansion $v(x,\lbd)w(u)$. Note that if $f$ is analytic at $(0,0)$ then letting $(c_\lph)_{\lph \in \N^2}$ be the coefficients in its power series, it satisfies \eqref{eq:loTaylor} with $A=\min\{\lph\colon c_\lph \ne 0\}$. First we show only the terms in $\env(A)$ are important. 
\begin{lemma}\label{lem:envelope-sum}
Suppose $f$ has a Taylor expansion around $(0,0)$ as in \eqref{eq:loTaylor}, with powers $A$ and non-zero coefficients $(c_{\lph})_{\lph \in A}$. Then
\begin{align}\label{eq:envelope-sum}
f(x,\lambda) \sim \sum_{\alpha \in \env(A)}c_\alpha x^{\alpha_1}\lambda^{\alpha_2} \ \ \text{as} \ \ |x|+|\lambda| \to 0.
\end{align}
\end{lemma}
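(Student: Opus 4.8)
The plan is to isolate the combinatorial core of the statement — that the monomials indexed by $A\setminus\env(A)$ contribute nothing to leading order — and to reduce \eqref{eq:envelope-sum} to it. Reading ``$g\sim h$ as $|x|+|\lbd|\to0$'' as $g-h=o\big(\sum_{\alpha\in B}|x|^{\alpha_1}|\lbd|^{\alpha_2}\big)$ when $h=\sum_{\alpha\in B}c_\alpha x^{\alpha_1}\lbd^{\alpha_2}$ (the natural reading, under which \eqref{eq:loTaylor} and \eqref{eq:envelope-sum} have the same form, with $A$ replaced by $\env(A)$), it suffices to prove the uniform estimate
\[
\sum_{\alpha\in A\setminus\env(A)}|x|^{\alpha_1}|\lbd|^{\alpha_2}=o\Big(\sum_{\beta\in\env(A)}|x|^{\beta_1}|\lbd|^{\beta_2}\Big)\qquad\text{as }|x|+|\lbd|\to0.
\]
Granting this, set $R:=f-\sum_{\alpha\in A}c_\alpha x^{\alpha_1}\lbd^{\alpha_2}$, so \eqref{eq:loTaylor} gives $R=o(\Sigma_A)$ with $\Sigma_A:=\sum_{\alpha\in A}|x|^{\alpha_1}|\lbd|^{\alpha_2}$. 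The displayed estimate yields $\Sigma_A=(1+o(1))\Sigma_E$, where $\Sigma_E:=\sum_{\beta\in\env(A)}|x|^{\beta_1}|\lbd|^{\beta_2}$, hence $\Sigma_A\le 2\Sigma_E$ near the origin and $R=o(\Sigma_E)$. Then $f-\sum_{\beta\in\env(A)}c_\beta x^{\beta_1}\lbd^{\beta_2}=R+\sum_{\alpha\in A\setminus\env(A)}c_\alpha x^{\alpha_1}\lbd^{\alpha_2}$, and bounding the finitely many coefficients, this is $o(\Sigma_E)$, which is \eqref{eq:envelope-sum}.

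For the estimate I would first translate the poset description of $\env(A)$ from Section~\ref{sec:env} into convex geometry. By Lemma~\ref{lem:envelope} and the description of $L(A)$, $S(A)$ following \eqref{eq:SofA}, the region $S(A)$ is exactly $\mathrm{conv}(\piv(A))+\R^2_{\ge 0}$ (equivalently $\mathrm{conv}(\env(A))+\R^2_{\ge 0}$), a closed convex polyhedral set whose lower-left boundary $L(A)$ is the polygonal path through the pivots $\lph(0),\dots,\lph(n)$, continued infinitely rightward from $\lph(0)$ and upward from $\lph(n)$. Moreover, any lattice point of $A$ lying on an edge $[\lph(i-1),\lph(i)]$ satisfies $s(\cdot,m_i)=s(A,m_i)$, hence lies in $A(m_i)\subseteq\env(A)$; and every $\alpha\in A$ lies in $S(A)$, since $s(\alpha,m)\ge s(A,m)$ for all $m$. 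Therefore each $\alpha\in A\setminus\env(A)$ lies in the topological interior of $S(A)$, so $\alpha-\delta(1,1)\in S(A)$ for some $\delta>0$, which gives points $\beta^{(1)},\dots,\beta^{(k)}\in\piv(A)\subseteq\env(A)$, weights $t_i\ge0$ with $\sum_i t_i=1$, and $u=(u_1,u_2)$ with $u_1,u_2\ge\delta>0$ such that $\alpha=\sum_i t_i\beta^{(i)}+u$.

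Finally, for $0<|x|,|\lbd|<1$, writing $|x|^{\alpha_1}|\lbd|^{\alpha_2}$ using this decomposition and applying the fact that a weighted geometric mean is at most the maximum,
\[
|x|^{\alpha_1}|\lbd|^{\alpha_2}=|x|^{u_1}|\lbd|^{u_2}\prod_{i=1}^k\big(|x|^{\beta^{(i)}_1}|\lbd|^{\beta^{(i)}_2}\big)^{t_i}\le|x|^{u_1}|\lbd|^{u_2}\max_{1\le i\le k}|x|^{\beta^{(i)}_1}|\lbd|^{\beta^{(i)}_2}\le|x|^{u_1}|\lbd|^{u_2}\,\Sigma_E ,
\]
and since $u_1,u_2>0$ we have $|x|^{u_1}|\lbd|^{u_2}\le\max(|x|,|\lbd|)^{u_1+u_2}\le(|x|+|\lbd|)^{u_1+u_2}\to0$; summing over the finitely many $\alpha\in A\setminus\env(A)$ gives the estimate. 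I expect the main obstacle to be the middle paragraph: establishing precisely that $A\setminus\env(A)$ lies in the \emph{open} interior of $S(A)$ (so that $u$ has strictly positive entries, producing $o$ rather than merely $O$), which requires carefully matching the anti-chain/pivot bookkeeping of Section~\ref{sec:env} with the convex-geometric picture of $S(A)$ and $L(A)$. Once that correspondence is nailed down, the estimate itself is the one-line AM–GM computation above.
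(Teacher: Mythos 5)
Your reduction to the combinatorial estimate $\sum_{\alpha\in A\setminus\env(A)}|x|^{\alpha_1}|\lbd|^{\alpha_2}=o\big(\sum_{\beta\in\env(A)}|x|^{\beta_1}|\lbd|^{\beta_2}\big)$ is the same first move as the paper's, but you prove that estimate by a genuinely different route. The paper first reduces to $\min(A)$ and then to disordered $A$, and for each $\alpha\in A\setminus\env(A)$ compares $x^{\alpha_1}\lbd^{\alpha_2}$ against the two flanking pivots $\lph(i-1),\lph(i)$ by substituting $x=z\lbd^{m}$ and splitting into the cases $z\le1$ and $z\ge1$. You instead write $\alpha$ as a convex combination of pivots plus a vector $u\in\R_{\ge0}^2$ and invoke the weighted geometric-mean inequality; this needs no preliminary reduction, treats all of $A\setminus\env(A)$ uniformly, and would extend verbatim to more variables (it is the Newton-polytope argument), at the cost of importing the duality fact $S(A)=\mathrm{conv}(\piv(A))+\R_{\ge0}^2$, which the paper's Section \ref{sec:env} describes but does not formally prove.

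One correction to your middle paragraph, which is also where you flagged your main worry. The claim that every $\alpha\in A\setminus\env(A)$ lies in the topological interior of $S(A)$ is false: for $A=\{(2,0),(3,0),(0,1)\}$ one has $\env(A)=\{(2,0),(0,1)\}$, and $(3,0)$ sits on the horizontal ray of $L(A)=\partial S(A)$ emanating from $\lph(0)=(2,0)$; the analogous failure occurs on the vertical ray above $\lph(n)$. (Such points are exactly the ones the paper's first reduction step absorbs into $\min(A)$.) However, you do not need the interior, and you do not need $u_1,u_2\ge\delta>0$: your closing chain $|x|^{u_1}|\lbd|^{u_2}\le\max(|x|,|\lbd|)^{u_1+u_2}\le(|x|+|\lbd|)^{u_1+u_2}\to0$ uses only $u_1,u_2\ge0$ and $u_1+u_2>0$, because both $|x|$ and $|\lbd|$ tend to $0$. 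So the claim to establish is the weaker one that each $\alpha\in A\setminus\env(A)$ admits a decomposition $\alpha=\sum_i t_i\beta^{(i)}+u$ with $u\in\R_{\ge0}^2\setminus\{0\}$, i.e., that $\alpha$ does not lie on the polygonal path through the pivots; and that holds because any point of $A$ on a segment $[\lph(i-1),\lph(i)]$ satisfies $s(\cdot,m_i)=s(A,m_i)$ by affineness of $s(\cdot,m_i)$, hence lies in $A(m_i)\subset\env(A)$. With that substitution your argument closes.
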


\begin{proof}
The result is clear if $A=\emptyset$. We will assume $x,\lbd \ge 0$, as the other cases follow in the same way with a change of sign. Since $\env(\min(A)) = \env(A)$ and $\min(A)$ is disordered, it suffices to show that
\itemgo
\item \eqref{eq:loTaylor} holds with $\min(A)$ in place of $A$, and
\item \eqref{eq:envelope-sum} holds assuming $A$ is disordered.
\itemend
The first step is easy: since $A$ is finite, for any $\lph \in A$ with $c_\lph \ne 0$,
$$\sum_{\{\lph' \in A \colon \lph' \ge \lph\}} c_{\lph'}x^{\lph_1'}\lbd^{\lph_2'} = (c_\lph+o(1))x^{\lph_1}\lbd^{\lph_2},$$
and since $A \subset \bigcup_{\lph \in \min(A)}\{\lph'\in \N^2 \colon \lph' \ge \lph\}$, it follows that $f(x,\lbd) \sim \sum_{\lph \in \min(A)}c_\lph x^{\lph_1} \lbd^{\lph_2}$. Now suppose $A$ is disordered. Since $A$ is finite, it suffices to show that for $\alpha=(\alpha_1,\lph_2) \in A\setminus \env(A)$,
\begin{align}\label{eq:env-sum}
x^{\lph_1}\lbd^{\lph_2} \ll \sum_{\alpha'\in \env(A)} x^{\lph_1'}\lbd^{\lph_2'}.
\end{align}
Let $\{\lph(i)\}_{i=0}^n \subset \env(A)$ denote the pivots from Lemma \ref{lem:envelope}. Since $A$ is disordered, $A=\min(A)$, so $\lph(0)=\arg\max\{\lph_1\colon \lph \in A\}$ and $\lph(n) =\arg\min\{\lph_1\colon \lph \in A\}$. Thus if $\lph \in A\setminus\env(A)$ then for some $i$, $\lph_1(i-1)<\lph_1 < \lph_1(i)$, and $s(\lph(i-1),m_i) = s(\lph(i),m_i)=s(A,m_i)<s(\lph,m_i)$.
\itemgo
\item If $(x,\lbd) \in [0,1]^2$ and $x = z \lbd^m$ with $z\le 1$ then $z^{\lph_1} \le z^{\lph_1(i-1)}$, so as $\lbd \to 0$,
$$x^{\lph_1}\lbd^{\lph_2} = z^{\lph_1}\lbd^{m \lph_1 + \lph_2} \ll z^{\lph_1(i-1)}\lbd^{m\lph_1(i-1) + \lph_2(i-1)} = x^{\lph_1(i-1)}\lbd^{\lph_2(i-1)}.$$

\item If $(x,\lbd) \in [0,1]^2$ and $x= z\lbd^m$ with $z \ge 1$, then $z^{\lph_1} \le z^{\lph_1(i)}$ so as $\lbd \to 0$,
$$x^{\lph_1}\lbd^{\lph_2} = z^{\lph_1}\lbd^{m\lph_1 + \lph_2} \ll z^{\lph_1(i)}\lbd^{m\lph_1(i)+\lph_2(i)} = x^{\lph_1(i)}\lbd^{\lph_2(i)}.$$
\itemend
It follows that $x^{\lph_1}\lbd^{\lph_2} \ll x^{\lph_1(i-1)}\lbd^{\lph_2(i-1)} + x^{\lph_1(i)}\lbd^{\lph_2(i)}$, which implies \eqref{eq:env-sum}.
\end{proof}

The next step is to refine Lemma \ref{lem:envelope-sum} by partitioning the vicinity of $(0,0)$ into regions where specific terms in the sum \eqref{eq:envelope-sum} are dominant. Conveniently, these regions are also the limit scales for $f$ in the sense of Definition \ref{def:lim-scales}. It suffices to work on $[0,1]^2$, as the results map to other quadrants by mapping that quadrant into $[0,1]^2$ by a change of sign. Just a note that I am picturing $[0,1]^2$ in $\R^2$ with $\lbd$ on the horizontal axis and $x$ on the vertical axis, as is tradition for bifurcation diagrams.\\

The usual notion of partition is a bit too strong for sets of sequences, so we replace it with the following partition-like notion that we call a subpartition.
\begin{definition}[subpartition]\label{def:subpart}
Let $S$ be a set of sequences, let $\mathcal{P}$ be a collection of subsets of $S$ and define the domain of $\cP$ by $\dom(\cP) := \bigcup_{A \in \cP}A$. Then $\cP$ is a subpartition of $S$ if
\itemgo
\item each $A\in \cP$ is closed under the taking of subsequences,
\item $A,B \in \cP$ and $A \ne B$ implies $A\cap B=\emptyset$, and
\item every element of $S$ has a subsequence belonging to $\dom(\cP)$.
\itemend
\end{definition}

A compelling example is the following: let $K$ be a compact set and $S$ the set of sequences with values in $K$, then let $\cP = \{A_x\}_{x \in K}$ where $A_x$ is the set of sequences in $S$ with limit $x$. Another example, which is the one we'll use for limit scales, is given by the following definition.

\begin{definition}[subpartition induced by a finite set]\label{def:subpartM}
Let $M=\{m_i\}_{i=1}^n \subset (0,\infty)$ be a non-empty finite set with $0<m_1<\dots <m_n<\infty$. Abusing notation somewhat, let $(x,\lbd)$ denote a sequence in $[0,1]^2$ that converges to $(0,0)$ and define $(R_i)_{i=0}^{2n}$ by
\begin{align}\label{eq:asym-regions}
& R_0 = \{(x,\lbd) \colon x \gg \lbd^{m_1}\}, \nonumber \\
& R_{2i-1} = \{x \asymp \lbd^{m_i}\}, \ i\in\{1,\dots,n\}, \nonumber \\
& R_{2i} = \{(x,\lbd) \colon \lbd^{m_i} \gg x \gg \lbd^{m_{i+1}}\}, \ i \in \{1,\dots,n-1\}\nonumber \\
& R_{2n} = \{(x,\lbd)\colon \lbd^{m_n} \gg x\}. 
\end{align}
The collection $\{R_i\}_{i=0}^{2n}$ is called the subpartition induced by $M$ and denoted $\sub(M)$.
\end{definition}

The following is not hard to show, so we'll omit the proof.

\begin{lemma}\label{lem:subpartM}
Let $M$ and $\sub(M)$ be as in Definition \ref{def:subpartM}.
\itemgo
\item $\sub(M)$ is a subpartition of sequences in $[0,1]^2$ that converge to $(0,0)$.
\item If $M\supset M'$ and $(x,\lbd)\in \dom(\sub(M))$, then $(x,\lbd) \in \dom(\sub(M'))$.
\itemend
\end{lemma}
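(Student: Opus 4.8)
\noindent The plan is to reduce both parts of the lemma to a single clean description of $\dom(\sub(M))$: a sequence $(x,\lbd)\to(0,0)$ in $[0,1]^2$, which I take throughout to have positive entries so that the ratios below are defined (a coordinate vanishing along a subsequence being a degenerate case to dispose of by convention), lies in $\dom(\sub(M))$ if and only if $x_n/\lbd_n^m$ converges in $[0,\infty]$ for every $m\in M$; and in that case the collection of limits determines which region $R_i\in\sub(M)$ contains it. The two asymptotic facts this rests on are: (a) $m<m'$ implies $\lbd_n^{m}\gg\lbd_n^{m'}$, since $\lbd_n^{m-m'}\to\infty$ as $\lbd_n\downarrow0$; and (b) transitivity of $\ll,\asymp,\gg$ among ratios already known to converge (e.g.\ $a_n\ll b_n$ and $b_n\asymp c_n$ give $a_n\ll c_n$, as $a_n/c_n=(a_n/b_n)(b_n/c_n)\to0$). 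Point (b) may only be invoked once the relevant limits are known to exist, which dictates the order of operations below.

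\emph{Part 1.} I would verify the three conditions of Definition \ref{def:subpart} in turn. Each $R_i$ is closed under subsequences because it is defined by the assertion that certain ratios have certain limits, and limits pass to subsequences. For pairwise disjointness, suppose a sequence lay in two distinct $R_i,R_j$; then, across possibly different indices, two of the relations ``$x_n\gg\lbd_n^{m_k}$'', ``$x_n\asymp\lbd_n^{m_k}$'', ``$x_n\ll\lbd_n^{m_k}$'' would hold simultaneously, and combining them via (b) forces $\lbd_n^{m_k}\asymp\lbd_n^{m_\ell}$ or $\lbd_n^{m_k}\ll\lbd_n^{m_\ell}$ for some $k<\ell$, contradicting (a); this is a finite check over the pairs of regions. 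For the covering property, given $(x,\lbd)\to(0,0)$, pass to a subsequence along which $x_n/\lbd_n^{m_i}\to\ell_i\in[0,\infty]$ for every $i\in\{1,\dots,n\}$ (finitely many ratios, $[0,\infty]$ compact). By (a), $\lbd_n^{m_{i+1}}/\lbd_n^{m_i}\to0$, so $\ell_i>0$ forces $\ell_{i+1}=\infty$; hence $\ell_1,\dots,\ell_n$ is a ``staircase'' equal to $0$ up to some index, then at most one value in $(0,\infty)$, then $\infty$. Let $j$ be the least index with $\ell_j>0$, if any. Then: if there is no such $j$, the subsequence lies in $R_{2n}$ (as $x_n\ll\lbd_n^{m_n}$); if $j=1$, it lies in $R_1$ or $R_0$ according as $\ell_1$ is finite or infinite; and if $j>1$ then $\ell_{j-1}=0$, so it lies in $R_{2j-1}$ when $\ell_j<\infty$ (then $x_n\asymp\lbd_n^{m_j}$) and in $R_{2(j-1)}$ when $\ell_j=\infty$ (then $\lbd_n^{m_{j-1}}\gg x_n\gg\lbd_n^{m_j}$). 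This gives the covering property and, along the way, the ``only if'' direction of the description of $\dom(\sub(M))$; the ``if'' direction is the same case analysis applied directly when the $\ell_i$ already exist without passing to a subsequence.

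\emph{Part 2.} With the description in hand this is immediate: if $(x,\lbd)\in\dom(\sub(M))$ then $x_n/\lbd_n^m$ converges in $[0,\infty]$ for every $m\in M$, hence for every $m\in M'\subset M$, so $(x,\lbd)\in\dom(\sub(M'))$. (Without the description: if $(x,\lbd)$ lies in a region of $\sub(M)$, transitivity pins down its asymptotic relation to $\lbd_n^{m}$ for every $m\in M$, in particular every $m\in M'$, and re-running the $j$-analysis using only the slopes in $M'$ places it in a region of $\sub(M')$.)

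The argument is largely bookkeeping; the fiddly parts are the finite disjointness check and confirming that the ``staircase'' cases land in exactly the regions indexed as in Definition \ref{def:subpartM} (including the extremes $j=1$, $j=n$ and the degenerate case $n=1$). The one genuine subtlety is the order of operations flagged above: transitivity of $\ll,\asymp,\gg$ may only be used after extracting a subsequence on which every $x_n/\lbd_n^{m_i}$ converges in $[0,\infty]$.
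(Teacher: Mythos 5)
Your proof is correct. The paper in fact omits the proof of this lemma entirely ("not hard to show"), so there is nothing to compare against; your argument — characterizing $\dom(\sub(M))$ as the set of sequences for which every ratio $x_n/\lambda_n^{m_i}$ converges in $[0,\infty]$, using compactness of $[0,\infty]$ to extract a convergent subsequence for the covering property, and the "staircase" structure of the limits $\ell_1,\dots,\ell_n$ (forced by $\lambda_n^{m_i}\gg\lambda_n^{m_{i+1}}$) to place each such sequence in exactly one $R_i$ — is the natural way to fill the gap, and it yields Part 2 for free. The only caveats are the ones you already flag: the degenerate case of vanishing coordinates, and deferring transitivity of $\ll,\asymp,\gg$ until the relevant limits are known to exist.
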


In the next result we show that if $f$ has a Taylor expansion around $(0,0)$ with powers $A$ and $M\supset M(A)$ then on each element of the subpartition induced by $M$, $f$ is asymptotic to the sum of the dominant terms on that element. The reason we formulate it in terms of $M\supset M(A)$ and not just $M(A)$ is because, once we deal with both $F$ and $G$, we need to join the corresponding subpartitions, which refines each one.

\begin{lemma}\label{lem:dom-terms}
Suppose $f$ has a Taylor expansion around $(0,0)$ with powers $A$ as in \eqref{eq:loTaylor}, and that $A \ne \emptyset$, and let $(A(m))_{m\in(0,\infty)}$ and $M(A)$ be as in Lemma \ref{lem:envelope}. Suppose $M = \{m_i\}_{i=1}^n\subset \N^2$ is a finite set with $0<m_1<\dots<m_n<\infty$, and that $M(A) \subset M$. Let $m_0=0$ and $m_{n=1}=\infty$ and for $i\in \{0,\dots,n\}$ let $\lph(i)$ be such that $A(m)=\{\lph(i)\}$ for $m\in (m_i,m_{i+1})$. Let $\{R_i\}_{i=0}^{2n}$ denote the subpartition induced by $M$, as in Definition \ref{def:subpartM}.
\enumrom
\item If $(x,\lbd) \in R_{2i}$ for $i\in \{0,\dots,n\}$ then $f(x,\lbd) \sim c_{\alpha(i)} x^{\lph_1(i)}\lbd^{\lph_2(i)}$.
\item If $(x,\lbd) \in R_{2i-1}$ for $i \in \{1,\dots,n\}$ then
$$f(x,\lbd) \sim \sum_{\lph \in A(m_i)} c_\lph x^{\lph_1}\lbd^{\lph_2}.$$
\enumend
Letting $A_{2i} = A(m_i)$ and $A_{2i-1} = \{\lph(i)\}$, for $i\in \{0,\dots,2n\}$ and $(x,\lbd) \in R_i$, 
\enumrom
\item[(iii)] $f(x,\lbd) \sim \sum_{\lph \in A_i}c_\lph x^{\lph_1}\lbd^{\lph_2}$, and
\item[(iv)] for any $\lph \in A_i$, $f(x,\lbd) = O(x^{\lph_1}\lbd^{\lph_2})$.
\enumend
\end{lemma}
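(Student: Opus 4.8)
The plan is to derive all four claims from two inputs: the reduction $f(x,\lambda)\sim\sum_{\alpha\in\env(A)}c_\alpha x^{\alpha_1}\lambda^{\alpha_2}$ provided by Lemma \ref{lem:envelope-sum}, and a one-line comparison of individual monomials. As in the proof of Lemma \ref{lem:envelope-sum} I would restrict to $x,\lambda\ge0$ (the other quadrants follow by sign changes), and then, since $\env(A)$ contains $A(m_i)$ and every pivot, observe that all four claims come down to the following on each region $R_j$ of $\sub(M)$: identify which of the monomials $x^{\alpha_1}\lambda^{\alpha_2}$, $\alpha\in\env(A)$, are of maximal order, verify that this set equals $A_j$ (namely $A(m_i)$ on $R_{2i-1}$ and $\{\alpha(i)\}$ on $R_{2i}$), and verify that the remaining monomials are of strictly smaller order. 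The comparison is: for distinct $\alpha,\beta\in\env(A)$ — necessarily incomparable, as $\env(A)$ is disordered — say with $\alpha_1<\beta_1$ and hence $\alpha_2>\beta_2$, write $\mu(\alpha,\beta):=(\alpha_2-\beta_2)/(\beta_1-\alpha_1)\in(0,\infty)$, the unique slope with $s(\alpha,\cdot)=s(\beta,\cdot)$; then
$$\frac{x^{\alpha_1}\lambda^{\alpha_2}}{x^{\beta_1}\lambda^{\beta_2}}=\Bigl(\frac{\lambda^{\mu(\alpha,\beta)}}{x}\Bigr)^{\beta_1-\alpha_1},$$
so on $[0,1]^2$ the $\alpha$-monomial is $\gg$, $\asymp$ or $\ll$ the $\beta$-monomial according as $x\ll\lambda^{\mu(\alpha,\beta)}$, $x\asymp\lambda^{\mu(\alpha,\beta)}$ or $x\gg\lambda^{\mu(\alpha,\beta)}$.

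On a transition region $R_{2i-1}=\{x\asymp\lambda^{m_i}\}$ I would substitute $z=x/\lambda^{m_i}$, so $z\asymp1$ and $x^{\alpha_1}\lambda^{\alpha_2}=z^{\alpha_1}\lambda^{s(\alpha,m_i)}\asymp\lambda^{s(\alpha,m_i)}$ uniformly over $\alpha\in\env(A)$. The maximal-order monomials are therefore exactly those minimizing $s(\cdot,m_i)$ over $A$, i.e.\ those with $\alpha\in A(m_i)$ (all of common order $\lambda^{s(A,m_i)}$), and the rest are strictly smaller. This yields (ii), and (iv) on $R_{2i-1}$.

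On an interval region $R_{2i}=\{\lambda^{m_i}\gg x\gg\lambda^{m_{i+1}}\}$ (with $m_0:=0$, $m_{n+1}:=\infty$), I would use that, by Lemma \ref{lem:envelope}, $\alpha(i)$ is the unique minimizer of $s(\cdot,m)$ over $A$ for every $m\in(m_i,m_{i+1})$. Fix $\alpha\in\env(A)\setminus\{\alpha(i)\}$; then $\alpha_1\ne\alpha_1(i)$, and the affine function $m\mapsto s(\alpha,m)-s(\alpha(i),m)$ (slope $\alpha_1-\alpha_1(i)$, vanishing at the crossover slope) is strictly positive on $(m_i,m_{i+1})$, which forces the crossover slope to be $\le m_i$ when $\alpha_1>\alpha_1(i)$ and $\ge m_{i+1}$ when $\alpha_1<\alpha_1(i)$. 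In the former case $x\ll\lambda^{m_i}\le\lambda^{\mu(\alpha(i),\alpha)}$ (a smaller exponent gives a larger value since $\lambda<1$), and in the latter $x\gg\lambda^{m_{i+1}}\ge\lambda^{\mu(\alpha,\alpha(i))}$; either way the comparison shows the $\alpha(i)$-monomial strictly dominates the $\alpha$-monomial. Hence $\alpha(i)$ is the unique monomial of maximal order, which gives (i) and (iv) on $R_{2i}$, while (iii) is just (i) together with (ii). The extreme regions are covered by the same argument: for $i=0$ every $\alpha\ne\alpha(0)$ has $\alpha_1<\alpha_1(0)$ (as $\alpha(0)$ is the rightmost point of $\min(A)$), so only the second case and only the bound $x\gg\lambda^{m_1}$ are used, and dually for $i=n$.

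None of this is deep; the step most susceptible to error is the bookkeeping on $R_{2i}$, namely placing the crossover slope correctly outside $(m_i,m_{i+1})$ and chaining the resulting exponent inequalities through the order-reversal caused by $0<\lambda<1$. I would also state explicitly that "$\sim$" in (i)--(iii) is used, as in \eqref{eq:loTaylor} and Lemma \ref{lem:envelope-sum}, in the sense of agreement up to terms of strictly smaller order than the surviving monomials — these monomials can individually exceed their sum along sequences on which the coefficients cancel, so the unconditional statement is (iv), obtained from $|f|\le\bigl|\sum_{\alpha\in\env(A)}c_\alpha x^{\alpha_1}\lambda^{\alpha_2}\bigr|+o(\cdot)$ together with the monomial comparisons above.
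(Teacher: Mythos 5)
Your proposal is correct and follows essentially the same route as the paper: reduce to $\env(A)$ via Lemma \ref{lem:envelope-sum}, then on each $R_j$ compare monomials pairwise against the pivot(s), using $x\asymp z\lbd^{m_i}$ with $z\asymp 1$ on the odd regions and the location of the crossover slope outside $(m_i,m_{i+1})$ on the even ones (the paper phrases the latter as $x=z\lbd^{m_i}$ or $z\lbd^{m_{i+1}}$ with $z\ll 1$ or $z\gg 1$, which is the same computation). Your closing remark on the intended meaning of $\sim$ versus the unconditional bound (iv) is a fair clarification, not a deviation.
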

For the sake of disambiguation, it should be noted that the definition of $\{\lph(i)\}_{i=0}^n$ given here agrees with the one in Lemma \ref{lem:envelope} iff $M=M(A)$. 
\begin{proof}
We first prove (iii)-(iv). (iii) is simply a summary of (i)-(ii). (iv) is trivial if $i$ is even, since $A_i$ is a singleton. For odd $i$, if $(x,\lbd)\in R_i$ then $x\sim \lbd^{m_i}z$ for some constant $z\in(0,\infty)$, and if $\lph \in A_i$ then $s(\lph,m_i)=s(A,m_i)$, so
$$x^{\lph_1}\lbd^{\lph_2} = z^{\lph_1 m_i}\lbd^{\lph_1 m_i + \lph_2} = z^{\lph_1 m_i} \lbd^{s(\lph,m_i)} \asymp \lbd^{s(A,m_i)}.$$
Since $A_i$ is finite, $f(x,\lbd) = O(\lbd^{s(A,m_i)})$, and (iv) follows.\\

We now prove (i)-(ii). Referring to Lemma \ref{lem:envelope}, since $M(A) \subset M$ and $M$ is arranged in increasing order,
\itemgo
\item $\{\lph(i)\}_{i=0}^n = \piv(A)$,
\item $\lph(0)=\arg\max\{\lph_1\colon \lph \in A\}$, 
\item $\lph(n)=\arg\min\{\lph_1\colon \lph \in A\}$,
\item $\lph_1(i-1)<\lph_1(i)$ and $\lph(i-1),\lph(i) \in A(m_i)$ if $m_i \in M(A)$, and
\item $\lph_1(i-1)=\lph(i)$ and $A(m_i)=\{\lph(i-1)\}=\{\lph(i)\}$ if $m_i \in M\setminus M(A)$.
\itemend

Suppose $(x,\lbd) \in R_{2i}$ and $\lph \in \env(A)\setminus \{\lph(i)\}$. If $\lph_1 \le \lph_1(i)$, then since $\env(A)$ is disordered, $\lph_1<\lph_1(i)$ and since $\lph(n)=\arg\min\{\lph_1\colon \lph\in A\}$, $i<n$ and $m_{i+1}<\infty$. Since $\lph(i)\in A(m_{i+1})$, $s(\lph(i),m_{i+1}) \le s(\lph,m_{i+1})$, so if $z$ satisfies $x=z\lbd^{m_{i+1}}$ then since $(x,\lbd) \in R_{2i}$, $\lbd \ll 1$ and $z\gg 1$, so
$$x^{\lph_1}\lbd^{\lph_2} = z^{\lph_1}\lbd^{s(\lph,m_{i+1})} \ll z^{\lph_1(i)}\lbd^{s(\lph(i),m_{i+1})} = x^{\lph_1(i)}\lbd^{\lph_2(i)}.$$
If instead $\lph_1 > \lph(i)$ then since $\lph(0) = \arg\max\{\lph_1\colon \lph\in A\}$, $i>0$ so $m_i>0$. Since $\lph(i) \in A(m_i)$, $s(\lph(i),m_i) \le s(\lph,m_i)$, so if $z$ satisfies $x=z\lbd^{m_i}$, then since $(x,\lbd) \in R_{2i}$, $\lbd \ll 1$ and $z \ll 1$, so
$$x^{\lph_1}\lbd^{\lph_2} =  z^{\lph_1}\lbd^{s(\lph,m_i)} \ll z^{\lph_1(i)} \lbd^{s(\lph(i),m_i)} = x^{\lph_1(i)}\lbd^{\lph_2(i)},$$
which implies (i). If $(x,\lbd) \in R_{2i-1}$ and $x=z\lbd^{m_i}$ then $z \asymp 1$, so for any $\lph,\lph' \in A$,
$$x^{\lph_1'}\lbd^{\lph_2'} = z^{\lph_1'} \lbd^{s(\lph',m_i)} \asymp \lbd^{s(\lph',m_i)} \begin{cases} = \lbd^{s(\lph,m_i)} & \text{if} \ \ s(\lph',m_i)=s(\lph,m_i) \\
\ll \lbd^{s(\lph,m_i)} & \text{if} \ \ s(\lph',m_i) > s(\lph,m_i),\end{cases}$$
which implies (ii).
\end{proof}

Using Lemma \ref{lem:dom-terms} and a bit of extra work, we show the limit scales for $f$ satisfying \eqref{eq:loTaylor} are exactly the sequences in $\dom(\sub(M(A)))$, and identify the correpsonding scale functions on each element of $\sub(M(A))$.

\begin{theorem}\label{thm:limscale}
Suppose $f$ has a Taylor expansion around $(0,0)$ with non-empty set of powers $A$ as in \eqref{eq:loTaylor}, and let $M(A)$ be as in Lemma \ref{lem:envelope}, $\sub(\dots)$ as in Definition \ref{def:subpartM} and $\dom(\dots)$ as in Definition \ref{def:subpart}. Then $(x,\lbd)$ is a limit scale for $f$ if and only if $(x,\lbd) \in \dom(\sub(M(A)))$.\\

In particular, if $M\supset M(A)$ and $(x,\lbd) \in \dom(\sub(M))$ then $(x,\lbd)$ is a limit scale for $f$, and defining $(R_i)_{i=0}^{2n}$ using $M$ as in Definition \ref{def:subpartM}, in the notation of Lemma \ref{lem:dom-terms}, the corresponding scale functions are given by
\itemgo
\item $v_{2i}(x,\lbd) := c_{\lph(i)}x^{\lph_1(i)}\lbd^{\lph_2(i)}$ and $w_{2i}(u) := u^{\lph_1(i)}$ on $(R_{2i})_{i=0}^n$, and by
\item $v_{2i-1}(x,\lbd) := \lbd^{s(A,m_i)}$ and $w_{2i-1}(u) := \sum_{\lph \in A(m_i)} c_\lph z^{\lph_1} u^{\lph_1}$ on $(R_{2i-1})_{i=1}^n$,
\itemend
where for each $(x,\lbd) \in R_{2i-1}$, $z$ is the unique value such that $x \sim z \lbd^{m_i}$.
\end{theorem}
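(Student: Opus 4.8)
The plan is to prove the ``in particular'' part first --- it contains the sufficiency half of the equivalence together with the explicit scale functions --- and then to extract necessity from it by a subsequence argument. I will read $f(ux,\lbd)\sim v(x,\lbd)w(u)$ in Definition~\ref{def:homscale} as $f(ux,\lbd)/v(x,\lbd)\to w(u)$, locally uniformly in $u\in(0,\infty)$, with $v$ a fixed, eventually nonzero reference; the proposed $v_{2i},v_{2i-1}$ are all of this nonzero type, and $w$ is permitted to vanish at isolated $u$ (which is needed in the odd case). The degenerate case $M(A)=\emptyset$, where $f$ has a single envelope monomial and every sequence is trivially a limit scale, is immediate, so assume $M(A)\ne\emptyset$.

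\textbf{Sufficiency and the scale functions.} Fix $M\supseteq M(A)$ and suppose $(x,\lbd)$ lies in the region $R_i$ of $\sub(M)$. The point that drives the argument is that multiplying $x$ by a fixed constant $u\in(0,\infty)$ preserves every relation $\gg,\asymp,\ll$ defining $\sub(M)$, so $(ux,\lbd)\in R_i$ as well. If $i=2j$ is even, Lemma~\ref{lem:dom-terms}(i) applied to $(ux,\lbd)$ gives $f(ux,\lbd)\sim c_{\lph(j)}(ux)^{\lph_1(j)}\lbd^{\lph_2(j)}=u^{\lph_1(j)}v_{2j}(x,\lbd)=w_{2j}(u)v_{2j}(x,\lbd)$. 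If $i=2j-1$ is odd, write $x=z_n\lbd_n^{m_j}$ with $z_n\to z\in(0,\infty)$; applying Lemma~\ref{lem:dom-terms}(ii) to $(ux,\lbd)$ and substituting gives
\[
f(ux,\lbd)\sim\sum_{\lph\in A(m_j)}c_\lph(uz_n)^{\lph_1}\lbd_n^{\,m_j\lph_1+\lph_2}=\lbd_n^{s(A,m_j)}\sum_{\lph\in A(m_j)}c_\lph z_n^{\lph_1}u^{\lph_1},
\]
using that $m_j\lph_1+\lph_2=s(A,m_j)$ on $A(m_j)$; since $z_n\to z$ and the finite sum is continuous in $z_n$ uniformly for $u$ in compacta, we may replace $z_n$ by $z$ and land on $v_{2j-1}(x,\lbd)w_{2j-1}(u)$.

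\textbf{Uniformity in $u$ --- the main obstacle.} Lemma~\ref{lem:dom-terms} is stated pointwise for a single sequence, so the substantive work is to check that its asymptotics survive the substitution $x\mapsto ux$ \emph{locally uniformly in $u$}. I would re-run its proof carrying $u$ as a bounded parameter: there each subdominant monomial is dominated by a dominant one through a factor $z^{\lph_1-\lph_1(j)}$ whose exponent gap has a fixed sign (or through a strictly positive power of $\lbd$), and Lemma~\ref{lem:envelope-sum} contributes a further $o(1)$ from the Taylor remainder in \eqref{eq:loTaylor}. Replacing $x$ by $ux$ replaces $z$ by $uz$; for $u$ in a compact $[a,b]\subset(0,\infty)$ the factor $u^{\lph_1-\lph_1(j)}$ is bounded, $az_n\to\infty$ or $bz_n\to0$ uniformly as the sign requires, and $(ux_n,\lbd_n)\to(0,0)$ uniformly in $u$, so each $o(1)$ estimate becomes uniform. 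This is routine but must be written out carefully, and is where I expect most of the effort to go.

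\textbf{Necessity.} Let $(x,\lbd)$ be a limit scale for $f$ with scale functions $v,w$, and fix $u_0$ with $w(u_0)\ne0$, so that $f(ux,\lbd)/f(u_0x,\lbd)\to w(u)/w(u_0)$ along the whole sequence. Since $\sub(M(A))$ is a subpartition (Lemma~\ref{lem:subpartM}(i)), every subsequence of $(x,\lbd)$ has a further subsequence lying in some region $R_i$ of $\sub(M(A))$ --- in the odd case, after also extracting so that $z_n$ converges in $(0,\infty)$ --- and along it the sufficiency direction, combined with the limit-scale hypothesis, forces $w$ to be a positive constant multiple of the corresponding $w_i$. It remains to note that distinct regions, and within a fixed odd region distinct limiting values of $z$, give pairwise non-proportional $w_i$: for even regions because $j\mapsto\lph_1(j)$ is strictly decreasing (Lemma~\ref{lem:envelope}), so $u^{\lph_1(j)}\not\propto u^{\lph_1(j')}$; for an odd region because the exponent set $\{\lph_1:\lph\in A(m_i)\}$ has at least two elements (as $m_i\in M(A)$), so $w_{2i-1}$ is non-monomial, and $(z/z')^{\lph_1}$ is not constant over that set when $z\ne z'$. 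Hence all subsequential behaviours of $(x,\lbd)$ agree, which forces the sequence to lie stably in a single $R_i$, i.e. $(x,\lbd)\in\dom(\sub(M(A)))$. Combining this with the sufficiency direction applied with $M=M(A)$ gives the stated equivalence.
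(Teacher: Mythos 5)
Your proof is correct and follows essentially the same route as the paper's: sufficiency via Lemma \ref{lem:dom-terms} together with closure of the regions $R_i$ under the scaling $x\mapsto ux$ (with the same deferral of local uniformity in $u$ to a re-examination of that lemma's proof), and necessity via extraction of subsequences and non-proportionality of the candidate shape functions $w$. In fact your necessity argument is slightly more careful than the paper's: you explicitly handle a sequence with $x_n/\lbd_n^{m_i}$ bounded in $(0,\infty)$ but oscillating between two limits $z\ne z'$, where both convergent subsequences land in the \emph{same} partition element $R_{2i-1}$ and the paper's claim that one can always find subsequences in two distinct elements of $\sub(M(A))$ does not apply; your observation that $w_{2i-1}$ is non-monomial (since $A(m_i)$ has at least two distinct first exponents) and depends non-proportionally on $z$ is exactly what is needed to close that case.
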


\begin{proof}
We first show that if $M\supset M(A)$ and $(x,\lbd) \in \dom(\sub(M))$ then $(x,\lbd)$ is a limit scale for $f$, and identify the given scale functions. We then show that if $(x,\lbd) \notin \dom(\sub(M(A)))$ then $(x,\lbd)$ is not a limit scale for $f$. If $M\subset (0,\infty)$ is any finite set, then elements of $\sub(M)$ are closed under positive scaling of either variable. That is, defining $(m_i)$, $(R_i)$ as in Definition \ref{def:subpartM}, if $(x,\lbd) \in R_i$ and $u\in (0,\infty)$ then $(ux,\lbd),(x,u\lbd)\in R_i$, for $i\in \{0,\dots,2n\}$. Suppose $M\supset M(A)$ and $(x,\lbd) \in \dom(\sub(M))$, so that $(x,\lbd) \in R_i$ for some $i$. If $(x,\lbd) \in R_{2i}$ then in the notation of Lemma \ref{lem:dom-terms},
$$f(ux,\lbd) \sim c_{\lph(i)}(ux)^{\lph_1(i)}\lbd^{\lph_2(i)}$$
which has the desired form with $v_{2i}(x,\lbd)=c_{\lph(i)}x^{\lph_1(i)}\lbd^{\lph_2(i)}$ and $w_{2i}(u) = u^{\lph_1(i)}$. If $(x,\lbd) \in R_{2i-1}$, letting $z$ be the constant such that $x\sim z\lbd^{m_i}$, then using Lemma \ref{lem:dom-terms} and recalling that $A(m_i):= \{\lph \in A\colon m_i\lph_1+\lph_2=s(A,m_i)\}$,
$$f(ux,\lbd) \sim \sum_{\lph \in A(m_i)}c_\lph (ux^{\lph_1})\lbd^{\lph_2} = \lbd^{s(A,m_i)}\sum_{\lph \in A(m_i)}c_\lph z^{\lph_1}u^{\lph_1},$$
which has the desired form with $v_{2i-1}(x,\lbd) = \lbd^{s(A,m_i)}$ and $w_{2i-1}(u) = \sum_{\lph \in A(m_i)} c_\lph z^{\lph_1}u^{\lph_1}$. In both cases, it is clear, by examining the proof of Lemma \ref{lem:dom-terms}, that for each $R_i$ and any fixed sequence $(x,\lbd)\in R_i$, convergence of $f(ux,\lbd)/(v_i(x,\lbd)w_i(u))$ to $1$ as $|x|+|\lbd|\to 0$ is uniform over $u$ in compact subsets of $(0,\infty)$. It remains to show that if $(x,\lbd) \notin \dom(\sub(M(A)))$ then $(x,\lbd)$ is not a limit scale for $f$. If $(x,\lbd) \notin \dom(\sub(M(A)))$ then it has subsequences $(x',\lbd')$ and $(x''',\lbd''')$ belonging to distinct elements of $\sub(M(A))$. This is verified as follows.
\itemgo
\item By Lemma \ref{lem:subpartM}, $(x,\lbd)$ has a subsequence $(x',\lbd') \in \A$ for some $\A \in \sub(M(A))$, and
\item Since $(x,\lbd) \notin \A$, by definition of $(R_i)$, $(x,\lbd)$ has a subsequence $(x'',\lbd'')$ that has no subsequence in $\A$; for example, if $(x_n,\lbd_n) \notin R_{2i-1}$, take $(n_k)$ so that $x_{n_k}/\lbd^{m_i}_{n_k}$ tends to $0$ or $\infty$. Using Lemma \ref{lem:subpartM} again, $(x'',\lbd'')$ has a subsequence $(x''',\lbd''')$ in $\A'$ for some $\A' \in \sub(M(A))$ with $\A' \ne \A$.
\itemend
Let $i \ne j$ be such that $\A=R_i$ and $\A'=R_j$ and let $v_i,w_i$ and $v_j,w_j$ be the limit scale functions given above. Then $w_i,w_j$ are polynomials and with $(A_i)$ as in Lemma \ref{lem:dom-terms}, since $A_i \ne A_j$ for $i\ne j$, $w_i$ is not a multiple of $w_j$. Suppose $(x,\lbd)$ is a limit scale and let $k,w$ be the corresponding functions. Then
$$f(ux',\lbd') \sim v_i(x',\lbd')w_i(u) \sim v(x',\lbd')w(u)$$
and
$$f(ux''',\lbd''') \sim v_j(x''',\lbd''')w_j(u) \sim v(x''',\lbd''')w(u).$$
If $u$ is such that $w_i(u),w_j(u),w(u) \ne 0$ then
$$w_i(u)/w(u) \sim v(x',\lbd')/v_i(x',\lbd') \ \text{and} \ w_j(u)/w(u) \sim v(x''',\lbd''')/v_j(x''',\lbd''').$$
In particular, on $\{u\in(0,\infty)\colon w_i(u)w_j(u)w(u) \ne 0\}$, $w_i(u)/w(u)$ and $w_j(u)/w(u)$ are constant, so $w_i(u)/w_j(u)$ is constant, i.e., $w_i$ is a multiple $w_j$, which it is not.
\end{proof}

\subsection{Limit scales around a constant branch}\label{sec:const-equil}

We now return to the discussion of shape that was interrupted above Section \ref{sec:env}. Recall that $F(x,\lbd),G(x,\lbd)$ are the characteristics of a parametrized QDP $(x_\ep(t;\lbd_\ep))_{t\ge 0}$ in the sense of Definition \eqref{def:prmtrzdQDP}, and we want $(1/a_\ep,\lbd_\ep)$ to be a limit scale for both $F$ and $G$. Assume that $F,G$ have a Taylor expansion around $(0,0)$ with respective powers $A,B$ and non-zero coefficients $(c_\lph)_{\lph \in A}$, $(d_\beta)_{\beta \in B}$, in the sense of \eqref{eq:loTaylor}. Then by Theorem \ref{thm:limscale}, $(1/a_\ep,\lbd_\ep)$ is a limit scale for both $F$ and $G$ iff it is in the domain of the subpartitions defined by both $A$ and $B$. This can be expressed concisely as follows. Let $A(m),B(m)$ and $M(A),M(B)$ be as in Lemma \ref{lem:envelope}, and let $M=M(A)\cup M(B)$. It is easy to check that
$$\sub(M) = \sub(M(A)) \vee \sub(M(B)) = \{S \cap S'\colon S\in \sub(M(A)), \ S' \in \sub(M(B))\}.$$
It follows that $(1/a_\ep,\lbd_\ep)$ is a limit scale for both $F$ and $G$ iff it is in $\dom(\sub(M))$. Label $\sub(M)$ by $(R_i)_{i=0}^{2n}$ as in Definition \ref{def:subpartM}, and let $(\lph(i))_{i=0}^n$ and $(\beta(i))_{i=0}^n$ be as in Lemma \ref{lem:dom-terms}, corresponding to $A,B$ respectively. Referring back to \eqref{eq:tF-tG-ratio2}-\eqref{eq:h-ell} and using the scale functions of Theorem \ref{thm:limscale}, if $(1/a_\ep,\lbd_\ep) \in R_i$ then $Q,V,h_\ep,\ell_\ep$ are as given in Table \ref{tab:QVhl}. Note that in all cases,
\begin{align}\label{eq:h-ell-Gma}
h_\ep &\asymp a_\ep^{1-\lph_1(i)}b_\ep \lbd_\ep^{\lph_2(i)} \quad \text{and} \nonumber \\
\ell_\ep &\asymp \ep^2 a_\ep^{2-\beta_1(i)}b_\ep \lbd_\ep^{\beta_2(i)}.
\end{align}
On $R_{2i}$, this is immediate; on $R_{2i-1}$, use $1/a_\ep\sim z\lbd^{m_i}$ and the fact that $s(\lph,m_i) = s(A,m_i)$ for all $\lph \in A(m_i)$ and $\lph(i) \in A(m_i)$, similarly for $B(m_i)$, to obtain \eqref{eq:h-ell-Gma}.\\
\begin{table}
\bgroup
\def\arraystretch{2}
\begin{tabular}{c | c | c | c | c}
$(1/a_\ep,\lbd_\ep) \in \bullet$ & $Q$ & $V$ & $h_\ep$ & $\ell_\ep$ \\ \hline  \hline 
$R_{2i}$ & $c_{\lph(i)}x^{\lph_1(i)}$ & $c_{\beta(i)}x^{\beta_1(i)}$ & $a_\ep^{1-\lph_1(i)} b_\ep \lbd_\ep^{\lph_2(i)}$ & $\ep^2 a_\ep^{2-\beta_1(i)} b_\ep \lbd_\ep^{\beta_2(i)}$ \\ \hline
$R_{2i-1}$ & $\sum_{\lph \in A(m_i)}c_\lph (zx)^{\lph_1}$ & $\sum_{\beta \in B(m_i)} c_\beta (zx)^{\beta_1}$ & $a_\ep b_\ep \lbd_\ep^{s(A,m_i)}$ & $\ep^2 a_\ep b_\ep \lbd_	\ep^{s(B,m_i)}$
\end{tabular} 
\egroup
\caption{Shape and scale of characteristics, in the notation of Lemma \ref{lem:dom-terms}.
\\ On $R_{2i-1}$, $z$ is the constant such that $1/a_\ep \sim z \lbd_\ep^{m_i}$.}
\label{tab:QVhl}
\end{table}

\nid\textit{Drift to diffusion ratio.} 
Let $\dlt(i) = \lph(i)-\beta(i)$ and define $r$ on $[0,1]^2$ piecewise by
\begin{align}\label{eq:dd-fcn}
r(x,\lbd) = x^{1+\dlt_1(i)}\lbd^{\dlt_2(i)} \ \ \text{for} \ \ \lbd^{m_{i+1}} \le x \le \lbd^{m_i},
\end{align}
with $m_0:=0$ and $m_{n=1}:=\infty$ (so $\lbd^{m_0}=1$ and $\lbd^{m_{n+1}}=0$). Since $\lph(i)$ and $\lph(i+1)$ both minimize $s(\lph, m_{i+1})$ over $\lph \in A$, and similarly for $\beta$, $r$ is well-defined on the curves $x=\lbd^{m_{i+1}}$ and continuous on $[0,1]^2$. The function $r$ gives the approximate ratio of drift to diffusion terms: if $(1/a_\ep,\lbd_\ep) \in R_i$ for some $i$ then it follows from \eqref{eq:h-ell-Gma} that
$$\ep^{-2} r(1/a_\ep,\lbd_\ep) \asymp h_\ep/\ell_\ep.$$
As in Section \ref{sec:iso}, there are three pertinent cases, that we call limit scales for the drift to diffusion ratio.

\itemgo
\item If $r(1/a_\ep,\lbd_\ep) \ll \ep^2$ then $h_\ep \ll \ell_\ep$(diffusion dominates).
\item If $r(1/a_\ep,\lbd_\ep) \gg \ep^2$ then $\ell_\ep \ll h_\ep$ (drift dominates).
\item If $r(1/a_\ep,\lbd_\ep) \asymp \ep^2$ then $\ell_\ep \asymp h_\ep$ (drift matches diffusion).
\itemend

\nid\textit{Time scale.} As in Section \ref{sec:iso}, $(b_\ep)$ should be chosen just large enough that at least one of $\tF,\tG$ is non-zero. Referring back to \eqref{eq:tF-tG-ratio2}, $h_\ep \asymp 1 \Lra \tF\propto Q$ and $\ell_\ep \asymp 1 \Lra \tG\propto R$. Using \eqref{eq:h-ell-Gma},
\itemgo
\item $h_\ep \asymp 1 \Lra b_\ep \asymp a_\ep^{\lph_1(i)-1}\lbd_\ep^{-\lph_2(i)}$ and
\item $\ell_\ep \asymp 1 \Lra b_\ep \asymp \ep^{-2}a_\ep^{\beta_1(i)-2}\lbd_\ep^{-\beta_2(i)}$.
\itemend

As in Section \ref{sec:iso}, in each case we say that $(b_\ep)$ is the visible time scale  corresponding to $(1/a_\ep,\lbd_\ep)$. We now combine these observations to state a limit theorem for QDPs. As in Section \ref{sec:iso}, we'll find that when $F=O(G)$, diffusion dominates when $(1/a_\ep,\lbd_\ep)$ is small, and drift dominates when $(1/a_\ep,\lbd_\ep)$ is large, but since the details are more complex, we'll first state the result just in terms of $r$ itself, then afterward study the geometry of the separatrix $\{(x,\lbd) \colon r(x,\lbd)=\ep^2\}$.

\begin{theorem}[Limit scales for parametrized QDPs]\label{thm:prmtrzd-limits}
Let $\tld U$ be a non-empty open convex set whose closure contains $0$, and let $\lbd$ be a parameter taking values in an interval $I$ that contains $0$. Suppose $F=F(x,\lbd)$ and $G=G(x,\lbd)$ have a Taylor expansion around $(0,0)$ with respective powers $A,B$, as in \eqref{eq:loTaylor}. Let $(x_\ep(t;\lbd)\colon  \ep \in \E,\,\lbd \in I,\,t\ge 0)$ be a collection of semimartingales, and given $(a_\ep),(b_\ep),(\lbd_\ep)$, suppose that for each domain $D\subset\subset \tld U$, $(x_\ep(t ;\lbd_\ep))_{t\ge 0}$ is a parametrized QDP on $(D_\ep):=(\{x/a_\ep\colon x \in D\})$ to scale $(a_\ep),(b_\ep)$ with characteristics $F,G$. Let $M(A),M(B)$ be as in Lemma \ref{lem:envelope} and let $M=M(A)\cup M(B)$, and let $\sub(M)$ be the subpartition induced by $M$, as in Definition \ref{def:subpartM}. Label $\sub(M)$ by $(R_i)$ as in Lemma \ref{lem:dom-terms}.\\

Let $Y_\ep(t) = a_\ep x_\ep(b_\ep t;\lbd_\ep)$ and suppose that $a_\epsilon \to \infty$. Suppose that $(1/a_\ep,\lbd_\ep)$ is a limit scale for both $F$ and $G$, and for the drift to diffusion ratio, and that $(b_\ep)$ is the corresponding visible time scale. Then $(1/a_\ep,\lbd_\ep) \in R_i$ for some $i$, $(a_\ep),(b_\ep)$ and $(\lbd_\ep)$ satisfy one of the sets of conditions below, and $(Y_\ep)$ is a QD with characteristics $\tF,\tG$ as described below, with $Q,V$ as given in Table \ref{tab:QVhl}.
\itemgo
\item If $r(1/a_\ep,\lbd_\ep) \ll \ep^2$ and $b_\ep \asymp \ep^{-2}a_\ep^{\beta_1(i)-2}\lbd_\ep^{-\beta_2(i)}$ then $\tld F=0$ and $\tld G \propto V$.
\item If $r(1/a_\ep,\lbd_\ep) \gg \ep^2$ and $b_\ep \asymp a_\ep^{\lph_1(i)-1}\lbd_\ep^{-\lph_2(i)}$ then $\tld F \propto Q$ and $\tld G=0$.
\item If $r(1/a_\ep,\lbd_\ep) \asymp \ep^2$ and $b_\ep \asymp a_\ep^{\lph_1(i)-1}\lbd_\ep^{-\lph_2(i)} \asymp \ep^{-2}a_\ep^{\beta_1(i)-2}\lbd_\ep^{-\beta_2(i)}$, then $\tld F \propto Q$ and $\tld G \propto V$.
\itemend
\end{theorem}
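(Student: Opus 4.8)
\emph{Proof proposal.}
The plan is to assemble the machinery of Sections \ref{sec:env}--\ref{sec:const-equil}. First I would use Theorem \ref{thm:limscale} to place the sequence $(1/a_\ep,\lbd_\ep)$ in the subpartition and to extract the product form of $F$ and $G$; then decode what the hypothesis ``$(b_\ep)$ is the visible time scale'' forces; then split into the three cases according to the stable ratio of $r(1/a_\ep,\lbd_\ep)$ to $\ep^2$; and finally read off $\tF,\tG$ and invoke Corollary \ref{cor:QDP-QD}. For the first step: since $(1/a_\ep,\lbd_\ep)$ is a limit scale for both $F$ and $G$, Theorem \ref{thm:limscale} places it in $\dom(\sub(M(A)))\cap\dom(\sub(M(B)))=\dom(\sub(M))$ with $M=M(A)\cup M(B)$ (the identity of intersections being the one recorded in Section \ref{sec:const-equil}), and as the elements $R_i$ of $\sub(M)$ are pairwise disjoint, $(1/a_\ep,\lbd_\ep)$ lies in exactly one $R_i$. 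On that $R_i$, Theorem \ref{thm:limscale} supplies scale functions for $F$ and for $G$ with convergence locally uniform in the dilation variable $u$, which rearranged as in \eqref{eq:tF-tG-ratio2}--\eqref{eq:h-ell} gives $a_\ep b_\ep F(u/a_\ep,\lbd_\ep)\sim h_\ep Q(u)$ and $\ep^2 a_\ep^2 b_\ep G(u/a_\ep,\lbd_\ep)\sim\ell_\ep V(u)$ with $Q,V$ the nonzero polynomials and $h_\ep,\ell_\ep$ the explicit monomials listed in \eqref{eq:h-ell-Gma} and Table \ref{tab:QVhl}.

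Next I would decode the time scale. Both $h_\ep$ and $\ell_\ep$ are proportional to $b_\ep$, while their ratio is $\asymp\ep^{-2}r(1/a_\ep,\lbd_\ep)$ (Section \ref{sec:const-equil}), which is stable by hypothesis; the visible time scale is by definition the $(b_\ep)$, unique modulo $\asymp$, making $\max(h_\ep,\ell_\ep)\asymp 1$ with neither sequence diverging. Since the ratio $h_\ep/\ell_\ep$ is stable, this forces exactly one of $h_\ep\ll\ell_\ep\asymp 1$, $h_\ep\asymp\ell_\ep\asymp 1$, or $\ell_\ep\ll h_\ep\asymp 1$, according to whether $r(1/a_\ep,\lbd_\ep)$ is $\ll\ep^2$, $\asymp\ep^2$, or $\gg\ep^2$; and the bullet equivalences stated just before the theorem convert $h_\ep\asymp 1$ into $b_\ep\asymp a_\ep^{\lph_1(i)-1}\lbd_\ep^{-\lph_2(i)}$ and $\ell_\ep\asymp 1$ into $b_\ep\asymp\ep^{-2}a_\ep^{\beta_1(i)-2}\lbd_\ep^{-\beta_2(i)}$, which is precisely the trichotomy in the statement. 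In each branch $h_\ep\to h\in[0,\infty)$, so $a_\ep b_\ep F(u/a_\ep,\lbd_\ep)\to hQ(u)$ locally uniformly in $u$, giving $\tF=hQ$ ($=0$ when $h=0$, else $\propto Q$); symmetrically $\tG=\ell V$ with $\ell\in[0,\infty)$. Thus \eqref{eq:FG-par-lim} holds with $\tF,\tG$ as asserted, and Corollary \ref{cor:QDP-QD} (with $x_\star=0$, so $X_\ep=Y_\ep$) concludes that $(Y_\ep)$ is a QD with these characteristics.

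The exponent arithmetic (Table \ref{tab:QVhl}, the relation $\ep^{-2}r\asymp h_\ep/\ell_\ep$, the translation of $h_\ep\asymp 1$ and $\ell_\ep\asymp 1$ into time-scale statements) and the observation that multiplying the uniform-in-$u$ asymptotic of Theorem \ref{thm:limscale} by the bounded factor $h_\ep$ or $\ell_\ep$ preserves uniformity are routine. The step I expect to take the most care is the uniformity in \eqref{eq:FG-par-lim} required by Corollary \ref{cor:QDP-QD} near the degenerate value $u=0$: for fixed $u>0$ the point $(u/a_\ep,\lbd_\ep)$ stays in $R_i$ and Lemma \ref{lem:dom-terms} applies, but as $u\downarrow 0$ that point may leave $R_i$. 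Here I would split $\{u\in K\}$ into $|u|\ge\dlt$ and $|u|<\dlt$; on the former Theorem \ref{thm:limscale}'s uniformity over $u$ in compact subsets of $(0,\infty)$ applies directly, and on the latter I would use that each monomial $x^{\lph_1}\lbd_\ep^{\lph_2}$ with $\lph_1\ge 1$ only decreases as $x\downarrow 0$, while each $\lbd$-only monomial $\lbd_\ep^{\lph_2}$ is constant in $x$ and hence, already at $x=1/a_\ep$ where the pivot monomial dominates, is $\ll a_\ep^{-\lph_1(i)}\lbd_\ep^{\lph_2(i)}\asymp h_\ep/(a_\ep b_\ep)$ --- so $|a_\ep b_\ep F(u/a_\ep,\lbd_\ep)|$ is uniformly small there, matching $\tF(0)$. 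The same bookkeeping with signs handles the case where $\tld U$ meets more than one quadrant.
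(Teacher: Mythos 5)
Your proposal is correct and follows essentially the same route as the paper: the paper's proof is a one-line deferral to the shape/ratio/time-scale discussion of Section \ref{sec:const-equil} together with Corollary \ref{cor:QDP-QD}, and your write-up is exactly that discussion assembled in order (Theorem \ref{thm:limscale} for the product form, \eqref{eq:h-ell-Gma} and $h_\ep/\ell_\ep\asymp\ep^{-2}r$ for the trichotomy, then the corollary). Your extra care about locally uniform convergence near $u=0$, which the paper leaves implicit in the phrase ``follows easily from the discussion above,'' is a welcome addition and your sketch of it is sound.
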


\begin{proof}
The result follows in each case from locally uniform convergence in \eqref{eq:FG-par-lim}, which in turn follows easily from the discussion above.
\end{proof}

We now study the regions defined by the three conditions on $r$. Before doing so, we take a moment to relate the condition $F=O(G)$ to the Taylor series of $F$ and $G$. Condition (iii) below is included for its visual appeal.

\begin{lemma}\label{lem:FOGenv}
Suppose $F,G$ has a Taylor expansion to leading order around $(0,0)$ as in \eqref{eq:loTaylor} with powers $A ,B\subset \N^2$ respectively, and let $\piv(A),\piv(B)$ be as in Lemma \ref{lem:envelope-sum} and $S(A),S(B)$ as in \eqref{eq:SofA}. Among the following statements, (i) $\Rightarrow$ (ii), (ii) $\Leftrightarrow$ (iii), and if $\piv(B)=\env(B)$ and $G\ge 0$ then (ii) $\Rightarrow$ (i).
\enumrom
\item $F(x,\lbd)=O(G(x,\lbd))$ as $|x|+|\lbd| \to 0$.
\item $s(B,m)\le s(A,m)$ for all $m\in(0,\infty)$.
\item $\piv(A)\subset S(B)$.
\enumend
Using $M=M(A)\cup M(B)$ in Lemma \ref{lem:dom-terms}, let $(\lph(i))$, $(\beta(i))$ correspond to $A,B$, respectively. Then any of (i)-(iii) above implies that
\itemgo
\item $\lph_2(0)\ge\beta_2(0)$ and if $\lph_2(0)=\beta_2(0)$ then $\lph_1(0) \ge \beta_1(0)$, and
\item $\lph_1(n)\ge \beta_1(n)$ and if $\lph_1(n) =\beta_1(n)$ then $\lph_2(n)\ge \beta_2(n)$.
\itemend
\end{lemma}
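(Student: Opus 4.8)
The plan is to convert all three conditions into a comparison between the concave piecewise-linear functions $m\mapsto s(A,m)=\min_{\lph\in A}(m\lph_1+\lph_2)$ and $m\mapsto s(B,m)$ on $(0,\infty)$, and then to read off the two displayed inequalities by looking at these functions near $m=0$ and $m=\infty$. The main tools are Lemma \ref{lem:dom-terms} (the dominant terms of a Taylor expansion on the regions $R_i$ of a subpartition $\sub(M)$) and Lemma \ref{lem:envelope}.

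For $(i)\Rightarrow(ii)$ I would test $F=O(G)$ along the curves $x=\lbd^{m}$. For $m\in(0,\infty)\setminus M(A)$ the sequence $(\lbd^{m},\lbd)$, $\lbd\to0^{+}$, lies in an even-indexed region $R_{2i}$ of $\sub(M(A))$, so Lemma \ref{lem:dom-terms}(i) gives $F(\lbd^{m},\lbd)\sim c_{\lph(i)}\lbd^{s(A,m)}$ with $c_{\lph(i)}\neq0$, hence $|F(\lbd^{m},\lbd)|\asymp\lbd^{s(A,m)}$; likewise $|G(\lbd^{m},\lbd)|\asymp\lbd^{s(B,m)}$ for $m\notin M(B)$. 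For $m\notin M(A)\cup M(B)$, $|F|\le C|G|$ along this curve forces $\lbd^{s(A,m)}=O(\lbd^{s(B,m)})$ as $\lbd\to0^{+}$, i.e.\ $s(A,m)\ge s(B,m)$; this holds on the dense set $(0,\infty)\setminus(M(A)\cup M(B))$, and $s(A,\cdot)$ and $s(B,\cdot)$ are continuous, so $s(B,m)\le s(A,m)$ for every $m$.

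For $(ii)\Leftrightarrow(iii)$: by definition, $\lph\in S(B)$ iff $s(\lph,m)\ge s(B,m)$ for all $m$. If (ii) holds and $\lph\in\piv(A)\subset A$ then $s(\lph,m)\ge s(A,m)\ge s(B,m)$, so $\piv(A)\subset S(B)$; conversely, if $\piv(A)\subset S(B)$ then for each $m$ the last clause of Lemma \ref{lem:envelope} gives $\lph\in\piv(A)$ with $s(A,m)=s(\lph,m)\ge s(B,m)$, so $s(B,m)\le s(A,m)$. For the two displayed inequalities, since $(i)\Rightarrow(ii)$ and $(ii)\Leftrightarrow(iii)$, it is enough to work from (ii): with $M=M(A)\cup M(B)$, on the first slope-interval $(0,m_1)$ one has $s(A,m)=m\lph_1(0)+\lph_2(0)$ and $s(B,m)=m\beta_1(0)+\beta_2(0)$, so letting $m\to0^{+}$ in $s(B,m)\le s(A,m)$ gives $\beta_2(0)\le\lph_2(0)$, and if equality holds, dividing by $m$ first gives $\beta_1(0)\le\lph_1(0)$; symmetrically, on $(m_n,\infty)$ one has $s(A,m)=m\lph_1(n)+\lph_2(n)$, $s(B,m)=m\beta_1(n)+\beta_2(n)$, so dividing by $m$ and letting $m\to\infty$ gives $\beta_1(n)\le\lph_1(n)$, with $\beta_2(n)\le\lph_2(n)$ when equality holds.

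The hard part is $(ii)\Rightarrow(i)$ under $\piv(B)=\env(B)$ and $G\ge0$, because the curvewise bound $\lbd^{s(A,m)}=O(\lbd^{s(B,m)})$ coming from (ii) must be promoted to a single constant $C$ with $|F|\le C|G|$ on a full punctured neighbourhood of $(0,0)$, which requires a matching lower bound for $|G|$ everywhere. I would work on $[0,1]^{2}$ (the other orthants follow by the sign changes $x\mapsto-x$, $\lbd\mapsto-\lbd$, which preserve $A,B$ and the moduli of all coefficients) and set $M=M(A)\cup M(B)$, so that $\sub(M)$ refines the subpartitions of both $F$ and $G$ and Lemma \ref{lem:dom-terms} applies to each. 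On an even-indexed region $R_{2i}$ of $\sub(M)$, $G\sim d_{\beta(i)}x^{\beta_1(i)}\lbd^{\beta_2(i)}$, so $G\ge0$ with $x,\lbd>0$ forces $d_{\beta(i)}>0$ for every $i$; on an odd-indexed region $R_{2i-1}$ with $m_i\in M(B)$, $\piv(B)=\env(B)$ forces $B(m_i)=\{\beta(i-1),\beta(i)\}$, so $G\sim\lbd^{s(B,m_i)}\bigl(d_{\beta(i-1)}z^{\beta_1(i-1)}+d_{\beta(i)}z^{\beta_1(i)}\bigr)$ with $z=x/\lbd^{m_i}$ confined to a compact subinterval of $(0,\infty)$ and both coefficients positive, so the bracket is bounded below (and if $m_i\notin M(B)$ then $B(m_i)$ is a singleton and this is immediate). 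Hence $|G|\asymp\lbd^{s(B,m)}$ on each region, with $m$ the exponent attached to the region, while Lemma \ref{lem:dom-terms}(iv) applied to $F$ gives $|F|=O(\lbd^{s(A,m)})$; since $s(A,m)\ge s(B,m)$ by (ii) and $\lbd\le1$, this yields $|F|/|G|=O(1)$ along every sequence in every region of $\sub(M)$. Finally, as $\sub(M)$ is finite and (Lemma \ref{lem:subpartM}) every sequence tending to $(0,0)$ in $[0,1]^{2}$ has a subsequence in one of its regions, a subsequence argument upgrades this to $|F|\le C|G|$ near $(0,0)$; the degenerate case $M=\emptyset$ is even simpler.
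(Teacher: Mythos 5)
Your overall route is the paper's: (i)$\Rightarrow$(ii) by testing along curves $x=\lbd^m$ and reading off exponents via Lemma \ref{lem:dom-terms}; (ii)$\Leftrightarrow$(iii) directly from the definitions of $S(B)$ and the last clause of Lemma \ref{lem:envelope}; the two bullet inequalities from the behaviour of $s(A,\cdot)-s(B,\cdot)$ as $m\to 0$ and $m\to\infty$; and (ii)$\Rightarrow$(i) by showing that $G\ge 0$ together with $\piv(B)=\env(B)$ forces all coefficients $d_{\beta(i)}$ (and hence all of $B(m_i)=\{\beta(i-1),\beta(i)\}$) to be positive, so that $|G|$ is bounded below by its dominant monomial(s) on each element of $\sub(M)$. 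Your variations (density of non-pivot slopes plus continuity of $s(A,\cdot),s(B,\cdot)$ in place of the paper's small-$z$ treatment of the pivot slopes; a direct argument rather than the paper's contradiction) are fine.

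There is one step that, as written, does not go through: on an even region $R_{2i}$ there is no ``exponent $m$ attached to the region,'' so the assertions $|G|\asymp\lbd^{s(B,m)}$ and $|F|=O(\lbd^{s(A,m)})$ are not meaningful there, and Lemma \ref{lem:dom-terms}(iv) only gives $|F|=O\bigl(x^{\lph_1(i)}\lbd^{\lph_2(i)}\bigr)$. What you actually need on $R_{2i}$ is that the monomial ratio $x^{a}\lbd^{b}$ with $a=\lph_1(i)-\beta_1(i)$, $b=\lph_2(i)-\beta_2(i)$ is $O(1)$, and this does not follow from ``$s(A,m)\ge s(B,m)$ and $\lbd\le 1$'' alone: it requires combining the inequalities $ma+b\ge 0$ at the two endpoints $m=m_i$ and $m=m_{i+1}$ (with the conventions $m_0=0$, $m_{n+1}=\infty$, which is where the two bullet statements about $\lph(0),\beta(0)$ and $\lph(n),\beta(n)$ enter) with the defining relations $\lbd^{m_{i+1}}\ll x\ll\lbd^{m_i}$ of $R_{2i}$, splitting into cases according to the signs of $a$ and $b$. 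This is precisely the case analysis the paper carries out in its Case 1; the claim is true, but you need to supply this argument rather than appeal to a per-region exponent. The rest of the proposal is sound.
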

Before giving the proof, let us note a counterexample that illustrates why the implication (ii) $\Rightarrow$ (i) needs to be qualified. Let $F(x,\lbd) = \lbd^2+x^2$ and $G(x,\lbd)=(\lbd-x)^2$, which even has $G\ge 0$. Then $A=\env(A)=\{(0,2),(2,0)\}$ and $B=\env(B)=\{(0,2),(1,1),(2,0)\}$, so $M(A)=M(B)=\{1\}$ and $\piv(A)=\piv(B)=\{(2,0),(0,2)\}$. Since $s(A,m)=s(\piv(A),m)$, and similarly for $B$, $s(A,m)=s(B,m)$ for each $m\in(0,\infty)$. On the other hand, along the line $x=\lbd$ we have $F(\lbd,\lbd) = 2\lbd^2$ while $G(\lbd,\lbd) = 0$. Note that, in this example, $\env(B)\setminus \piv(B)=\{(1,1)\}\ne\emptyset$.

\begin{proof}[Proof of Lemma \ref{lem:FOGenv}]
We proceed in the following order:
\enumar
\item (ii) implies the last two statements.
\item (ii) $\Leftrightarrow$ (iii).
\item (i) $\Rightarrow$ (ii).
\item If $\piv(B)=\env(B)$ and $G\ge 0$ then (ii) $\Rightarrow$ (i).
\enumend

\nid\textit{Last two statements.} If $m\in (0,m_1)$ then using (ii), $s(\lph(0),m)=s(A,m)\ge s(B,m) = s(\beta(0),m)$. Letting $m\to 0$, $\lph_2(0)\ge \beta_2(0)$, and if $\lph_2(0)=\beta_2(0)$ then $\lph_1(0)\ge \beta_1(0)$. Using a similar argument for $m\in (m_n,\infty)$ gives the last statement.\\

\nid\textit{(ii) $\Rightarrow$ (iii).} Since $\piv(A)\subset A$ it's enough to show that $A\subset S(B)$. By definition, $S(B) = \{(x,y) \in \R^2\colon s((x,y),m) \ge s(B,m) \ \text{for all} \ m \in (0,\infty)\}$. If $\lph \in A$ then by definition $s(\lph,m) \le s(A,m)$ for all $m\in (0,\infty)$. Using (ii), $s(\lph,m) \le s(B,m)$ for all $m\in (0,\infty)$, so $\lph \in S(B)$.\\

\nid\textit{(iii) $\Rightarrow$ (ii).} For $m \in (0,\infty)$, from Lemma \ref{lem:envelope} there is $\lph \in \piv(A)$ with $s(\lph,m)=s(A,m)$. If (iii) holds then $\lph \in S(B)$ so in particular, $s(\lph,m) \ge s(B,m)$. Combining gives (ii).\\ 

\nid\textit{(i) $\Rightarrow$ (ii).} 
Denote the coefficients of $F,G$ as in \eqref{eq:loTaylor} by $(c_\lph)$ and $(d_\beta)$ respectively. If $x = z\lbd^{m_i}$ with constant $z>0$ and $\lbd \to0$ then $(x,\lbd) \in R_{2i-1}$ and
$$F(x,\lbd)=\sum_{\lph \in A(m_i)}(c_\lph + o(1))x^{\lph_1}\lbd^{\lph_2} = \lbd^{s(A,m_i)}\sum_{\lph \in A(m_i)} (c_\lph+o(1))z^{\lph_1},$$ 
where $c_\lph \ne 0$ for every $\lph \in A(m_i)$. Since $\lph(i)$ minimizes $\lph_1$ over $(\lph_1,\lph_2)\in A(m_i)$, if $z>0$ is small then
$$\left |\sum_{\lph \in A(m_i)}(c_\lph+o(1))z^{\lph_1}\right| \ge \frac{1}{2}\, c_{\lph(i)}z^{\lph_1(i)}>0$$
and $F(z\lbd^{m_i},\lbd) \asymp \lbd^{s(A,m_i)}$. Similarly $G(z\lbd^{m_i},\lbd)\asymp \lbd^{s(B,m_i)}$ for small $z>0$. If $F=O(G)$ it follows that $s(A,m_i) \ge s(B,m_i)$.
If $m\in (m_i,m_{i+1})$ then $(\lbd^m,\lbd)\in R_{2i}$ and $F(\lbd^m,\lbd)\sim c_{\lph(i)}\lbd^{s(\lph(i),m)}$, similarly, $G(x,\lbd)\sim d_{\lph(i)}\lbd^{s(\beta(i),m)}$. Since $s(\lph(i),m)=s(A,m)$ and $s(\beta(i),m) = s(B,m)$, if $F=O(G)$ then $s(A,m)\ge s(B,m)$.\\ 

\nid\textit{If $\piv(B)=\env(B)$ and $G\ge 0$ then (ii) $\Rightarrow$ (i).} We show by contradiction that if (i) is false then (ii) cannot be true. With $M(A),M(B)$ as in Lemma \ref{lem:envelope}, let $M=M(A)\cup M(B)$ and label $\sub(M)$ by $(R_i)_{i=0}^{2n}$ as in Lemma \ref{def:subpartM}. If (i) is false, there is a sequence $(x,\lbd)$ with limit $(0,0)$ such that $|G(x,\lbd)| / |F(x,\lbd)| \to 0$. By Lemma \ref{lem:subpartM} we can assume $(x,\lbd) \in \dom(\sub(M))=\bigcup_{i=0}^{2n} R_i$. There are two cases -- note that the assumptions $\piv(B)=\env(B)$ and $G\ge 0$ will only be needed in Case 2.\\

\nid\textit{Case 1: $(x,\lbd) \in R_{2i}$ for some $i$.} By Lemma \ref{lem:dom-terms}, $F(x,\lbd) \sim x^{\lph_1(i)}\lbd^{\lph_2(i)}$ and $G(x,\lbd)\sim x^{\beta_1(i)}\lbd^{\beta_2(i)}$, so
\begin{align}\label{eq:GoverF}
|G(x,\lbd)|/|F(x,\lbd)| \sim x^{\beta_1(i)-\lph_1(i)}\lbd^{\beta_2(i)-\lph_2(i)} \to 0,
\end{align}
moreover $s(\lph(i),m) = s(A,m)$ and $s(\beta(i),m) = s(B,m)$ for $m\in (m_i,m_{i+1})$, so if (ii) holds then $s(\lph(i),m) \ge s(\beta(i),m)$ for $m\in (m_i,m_{i+1})$. If $i<n$ then $m_{i+1}<\infty$, so by definition of $(x,\lbd) \in R_{2i}$, $x \gg \lbd^{m_{i+1}}$, and by continuity of $m\mapsto s((x,y),m)$, $s(\lph(i),m_{i+1}) \ge s(\beta(i),m_{i+1})$. Plugging $x\gg \lbd^{m_{i+1}}$ into \eqref{eq:GoverF},
$$\lbd^{m_{i+1}(\beta_1(i)-\lph_1(i))} \ll \lbd^{\lph_2(i)-\beta_2(i)}$$
and rearranging gives $$\lbd^{s(\beta(i),m_{i+1})} \ll \lbd^{s(\lph(i),m_{i+1})}$$
which, since $\lbd \to 0$, contradicts $s(\lph(i),m_{i+1})\ge s(\beta(i),m_{i+1})$. The same argument is applicable if $i=n$, as long as $x \gg \lbd^m$ for some $m\in (m_n,\infty)$. If instead $i=n$ and $x/\lbd^m \to 0$ for all $m<\infty$ then supposing again that (ii) holds, we showed above that $\lph_1(n) \ge \beta_1(n)$ and that if $\lph_1(n)=\beta_1(n)$ then $\lph_2(n) \ge \beta_2(n)$. The latter is ruled out, since plugging into \eqref{eq:GoverF} gives $\lbd^a \to 0$ with $a=\beta_2(i)-\lph_2(i) \le 0$. The remaining case is $\lph_1(n)>\beta_1(n)$ which in \eqref{eq:GoverF} gives $x^a \lbd^b$ for some $a,b$ with $a<0$. By assumption, $\lbd^m/x \to \infty$ for every $m$, so taking $m=b/|a|$, $x^a \lbd^b  = (\lbd^{b/|a|}/x)^{|a|}\to \infty$, contradicting \eqref{eq:GoverF}.\\

\nid\textit{Case 2: $(x,\lbd) \in R_{2i-1}$ for some $i$.} We first prove the result assuming $d_\beta>0$ for all $\beta \in B(m_i)$, then show this assumption is implied by $G\ge 0$ and $\piv(B)=\env(B)$. Let $z \in (0,\infty)$ be such that $x\sim \lbd^{m_i}z$. Then
$$G(x,\lbd) \sim \sum_{\beta \in B(m_i)} d_\beta x^{\beta_1}\lbd^{\beta_2} \sim \lbd^{s(B,m_i)}\sum_{\beta \in B(m_i)}d_\beta z^{\beta_1}.$$
Using (iv) of Lemma \ref{lem:dom-terms} and noting $x^{\lph_1}\lbd^{\lph_2} \sim z^{\lph_1}\lbd^{s(A,m_i)}$ for $\lph \in A(m_i)$,
$$F(x,\lbd) = O(\lbd^{s(A,m_i)}).$$
If $d_\beta>0$ for every $\beta \in B(m_i)$ then $G(x,\lbd) \asymp \lbd^{s(B,m_i)}$, so if $|G(x,\lbd)|/|F(x,\lbd)| \to 0$ then $\lbd^{s(B,m_i)} \ll \lbd^{s(A,m_i)}$, implying $s(B,m_i) > s(A,m_i)$. Lastly we show that if $G\ge 0$ and $\piv(B)=\env(B)$ then $d_\beta>0$ for all $\beta \in B(m_i)$ and each $i \in \{1,\dots,n\}$. Recall that $\piv(B) = \{\beta(i)\}_{i=1}^n$. If $\piv(B)=\env(B)$ then since $B(m_i)\subset \env(B)$, $B(m_i) = B(m_i) \cap \piv(B)$ and, since from Lemma \ref{lem:envelope}, $\beta(i-1),\beta(i) \in B(m_i)$, it follows that $B(m_i) = \{\beta(i-1),\beta(i)\}$, so for fixed $z \in (0,\infty)$,
$$G(z\lbd^{m_i},\lbd) \sim \lbd^{s(B,m_i)} (d_{\beta(i-1)} z^{\beta_1(i-1)} + d_{\beta(i)}z^{\beta_1(i)}).$$
By Lemma \ref{lem:envelope}, $i\mapsto \beta_1(i)$ is decreasing, so $\beta_1(i)<\beta_1(i-1)$. By assumption, $d_{\beta(i-1)}\ne 0$ and $d_{\beta(i)}\ne 0$. If $z$ is small then $|d_{\beta(i)} z^{\beta_1(i)}| \le |d_{\beta(i-1)}z^{\beta_1(i-1)}|/2$, so if $G\ge 0$ then since $z>0$, $d_{\beta(i-1)}$ must be positive. Similarly if $z$ is large then
$ |d_{\beta(i-1)}z^{\beta_1(i-1)}|\le |d_{\beta(i)} z^{\beta_1(i)}|/2$, so if $G\ge 0$ then since $z>0$, $d_{\beta(i)}$ must be positive. 
\end{proof}

Next we define the drift-diffusion separatrix curve $\Phi$, which is actually a family of curves in $[0,1]^2$, parametrized by $\ep>0$. We will define it as a set, then show it is a piecewise smooth curve. Define $\Phi(\ep)$ to be the $\ep^2$ level set of $r$, that is,
\begin{align}\label{eq:dd-set}
\Phi(\ep) = \{(x,\lbd) \in [0,1]^2\colon r(x,\lbd) = \ep^2\}.
\end{align}
As is clear from Theorem \ref{thm:prmtrzd-limits}, $\Phi$ delineates the boundary between stochastic and deterministic limits. In the previous section we found that if $F=O(G)$ then diffusion dominates at small scales while drift dominates at larger scales, with the two regimes separated by the dd scale. The following result generalizes that observation. A Jordan arc is the image of an injective, continuous function with domain $[0,1]$.

\begin{theorem}\label{thm:dd-curve}
Suppose $F,G$ have a Taylor expansion around $(0,0)$ with respective powers $A,B$ as in \eqref{eq:loTaylor}, and that $A$ has an element of the form $(\lph_1,0)$. Suppose in addition that $F=O(G)$ on $[0,1]^2$, and let $\Phi$ be as in \eqref{eq:dd-set}. If $\ep>0$ is small, then $\Phi(\ep)$ is a piecewise smooth Jordan arc that connects $(0,1) \times \{0\}$ to $(0,1) \times \{1\}$ through the set $(0,1)^2$, and $\sup\{x \colon (x,\lbd) \in \Phi(\ep)\} \to 0$ as $\ep \to 0$.\\

$\Phi(\ep)$ is the graph of a function $\lbd \mapsto \phi_\ep(\lbd)$ for small $\ep>0$ iff $\lph_1(i)\ge \beta_1(i)$ for every $i \in \{0,\dots,n\}$. When this holds, for given sequences $(a_\ep),(\lbd_\ep)$, $r(1/a_\ep,\lbd_\ep)$ is $\ll \ep^2, \asymp \ep^2$ or $\gg \ep^2$ iff $1/a_\ep$ is $\ll \phi_\ep(\lbd_\ep), \asymp \phi_\ep(\lbd_\ep)$ or $\gg \phi_\ep(\lbd_\ep)$, respectively.
\end{theorem}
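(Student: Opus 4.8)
\emph{Setup.} The plan is to pass to logarithmic coordinates, in which $r$ becomes the exponential of a continuous, positively homogeneous, piecewise-linear function, and then to read off the three assertions from the elementary geometry of its level sets. Put $p=-\log\lbd$, $q=-\log x$, so $(0,1)^2\leftrightarrow(0,\infty)^2$ with the edges $\{\lbd=1\},\{\lbd=0\},\{x=1\},\{x=0\}$ corresponding to $\{p=0\}$, $p\to\infty$, $\{q=0\}$, $q\to\infty$. Set $\rho:=-\log r$ and $T:=-\log\ep$, so $\Phi(\ep)$ corresponds to $\{\rho=2T\}$. From the piecewise definition of $r$, on the cone $W_i:=\{m_ip\le q\le m_{i+1}p\}$ ($m_0=0$, $m_{n+1}=\infty$) one has $\rho(p,q)=(1+\dlt_1(i))q+\dlt_2(i)p$, so $\rho$ is continuous on $(0,\infty)^2$, linear on each $W_i$, and positively homogeneous of degree $1$: with $\sigma(m):=\rho(1,m)$ we get $\rho(p,q)=p\,\sigma(q/p)$, and since $A(m)=\{\lph(i)\}$, $B(m)=\{\beta(i)\}$ on $(m_i,m_{i+1})$ (Lemma~\ref{lem:dom-terms}), also $\sigma(m)=m+\big(s(A,m)-s(B,m)\big)$, so $\sigma$ is continuous, piecewise linear, with $\sigma'(m)=1+\dlt_1(i)$ on $(m_i,m_{i+1})$. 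Two facts come from the hypotheses. Since $A$ has an element $(\lph_1,0)$, the ordering of $\min(A)$ forces $\lph(0)=\arg\max\{\lph_1\colon\lph\in\min(A)\}$ (Lemmas~\ref{lem:envelope}--\ref{lem:dom-terms}) to have second coordinate $0$; combined with $F=O(G)$, i.e. $s(A,m)\ge s(B,m)$ for all $m$ (Lemma~\ref{lem:FOGenv}), the limit $m\to 0$ gives $0=\lph_2(0)\ge\beta_2(0)$, so $\beta_2(0)=0$, $\dlt_2(0)=0$, $\sigma(0)=0$; Lemma~\ref{lem:FOGenv} also gives $\dlt_1(0),\dlt_1(n)\ge 0$, so $\sigma$ has positive slope on its first and last pieces and $\sigma(m)\to\infty$. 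Finally $s(A,m)\ge s(B,m)$ gives $\sigma(m)\ge m$, hence $\rho(p,q)=p\,\sigma(q/p)\ge q>0$ on $(0,\infty)^2$.

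\emph{The arc, its endpoints, and $\sup\to 0$.} By homogeneity, along each ray $\{q=mp\}$ one has $\rho=p\,\sigma(m)$, strictly increasing in $p$ from $0$ to $\infty$; hence $\{\rho=2T\}$ meets each ray exactly once, and in polar form it is $\theta\mapsto(2T/\psi(\theta))(\cos\theta,\sin\theta)$ with $\psi:=\rho(\cos\theta,\sin\theta)>0$ continuous — a continuous injective curve on $\theta\in(0,\pi/2)$ crossing each cone boundary once, hence a concatenation of $n+1$ linear segments. As $\theta\to\pi/2$ the point tends to $(0,2T/(1+\dlt_1(n)))$, and as $\theta\to0$ (using $\sigma(0)=0$, $\sigma'(0^+)=1+\dlt_1(0)$) it has $p\to\infty$, $q\to2T/(1+\dlt_1(0))$; in $(x,\lbd)$-coordinates these are $(\ep^{2/(1+\dlt_1(n))},1)$ and $(\ep^{2/(1+\dlt_1(0))},0)$. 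Since $r(x,0)=x^{1+\dlt_1(0)}$ (region $R_0$, using $\dlt_2(0)=0$) and $r(x,1)=x^{1+\dlt_1(n)}$ (limit from $R_{2n}$), these two points lie on $\Phi(\ep)$ and are its only points with $\lbd\in\{0,1\}$; extending the polar parametrization continuously to $[0,\pi/2]$ so that the endpoints map to them exhibits $\Phi(\ep)$ as a Jordan arc, piecewise smooth since each of its $n+1$ pieces is an arc $x^a\lbd^b=\ep^2$, running from $(\ep^{2/(1+\dlt_1(0))},0)\in(0,1)\times\{0\}$ to $(\ep^{2/(1+\dlt_1(n))},1)\in(0,1)\times\{1\}$ through $(0,1)^2$. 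For the last claim, $\sigma(m)\le Cm$ for a constant $C$ (check cone by cone: for $i\ge1$ use $m\ge m_i$ to bound $\dlt_2(i)/m$, for $i=0$ use $\dlt_2(0)=0$), so $\rho(p,q)\le Cq$, and any $(x,\lbd)\in\Phi(\ep)$ satisfies $2T=\rho(p,q)\le-C\log x$, i.e. $x\le\ep^{2/C}\to0$.

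\emph{Graph criterion and the trichotomy.} Since $\rho(p,q)=p\,\sigma(q/p)$, the line $\{p=p_0\}$ meets $\Phi(\ep)$ in exactly one point iff $\{\sigma=2T/p_0\}$ is a singleton; as $p_0$ runs over $(0,\infty)$ so does $2T/p_0$, so $\Phi(\ep)$ is a graph over $\lbd$ — for some, equivalently every, $\ep\in(0,1)$ — iff every positive level set of $\sigma$ is a singleton, iff $\sigma$ is injective, iff (as $\sigma(0)=0$, $\sigma\to\infty$) $\sigma$ is strictly increasing, iff $1+\dlt_1(i)>0$ on every $(m_i,m_{i+1})$, iff $\lph_1(i)\ge\beta_1(i)$ for all $i$. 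When this holds, $\phi_\ep$ is the continuous, piecewise-smooth function with graph $\Phi(\ep)$, and $\rho(p,\cdot)$ is bi-Lipschitz in $q$ uniformly in $p$: $c_-(q'-q)\le\rho(p,q')-\rho(p,q)\le c_+(q'-q)$ for $q\le q'$, with $c_-:=\min_i(1+\dlt_1(i))\ge1$, $c_+:=\max_i(1+\dlt_1(i))$. Writing $p_\ep:=-\log\lbd_\ep$, $u_\ep:=\log a_\ep$, $q_\ep^{*}:=-\log\phi_\ep(\lbd_\ep)$ and using $\rho(p_\ep,q_\ep^{*})=2T$,
$$\log\!\frac{r(1/a_\ep,\lbd_\ep)}{\ep^2}=\rho(p_\ep,q_\ep^{*})-\rho(p_\ep,u_\ep),\qquad\log\!\frac{1/a_\ep}{\phi_\ep(\lbd_\ep)}=q_\ep^{*}-u_\ep;$$
by the bi-Lipschitz estimate the two right-hand sides have the same sign and their ratio is confined to $[c_-,c_+]$, so one tends to $-\infty$ (resp.\ stays bounded, resp.\ tends to $+\infty$) iff the other does — exactly the claimed trichotomy.

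\emph{Main obstacle.} The genuine work is the combinatorial bookkeeping in the Setup: pinning down $\lph(0),\lph(n),\beta(0),\beta(n)$ and extracting from the hypotheses, via Lemma~\ref{lem:FOGenv}, the three facts $\dlt_2(0)=0$, $\dlt_1(0),\dlt_1(n)\ge 0$ and $s(A,m)\ge s(B,m)$ for all $m$; once these are in hand, positivity and homogeneity of $\rho$, the exact endpoints, the bound $\rho\le Cq$, and the ``$\sigma$ injective'' criterion all follow cheaply. A smaller point needing attention is that the level curve escapes to $p=+\infty$ in log coordinates although it reaches the edge $\{\lbd=0\}$ in the original coordinates, so one must verify — through the continuous extension of $r$ to $[0,1]^2$ — that $(\ep^{2/(1+\dlt_1(0))},0)$ truly belongs to $\Phi(\ep)$.
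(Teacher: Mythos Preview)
Your proof is correct and takes a genuinely different route from the paper. The paper works directly in $(x,\lbd)$ coordinates: it slices $[0,1]^2$ into the regions $S_i=\{\lbd^{m_{i+1}}\le x\le\lbd^{m_i}\}$, locates the unique intersection point $p_i(\ep)$ of $\Phi(\ep)$ with each transition curve $L_i=\{x=\lbd^{m_i}\}$, and then, by a case split on the sign of $1+\dlt_1(i)$, argues that $\Phi(\ep)\cap S_i$ is a smooth arc from $p_i(\ep)$ to $p_{i+1}(\ep)$, contained in the rectangle they span, with $\lbd$ increasing along it iff $\lph_1(i)\ge\beta_1(i)$. The trichotomy is obtained from the one-sided estimate $r(x',\lbd)/r(x,\lbd)\ge x'/x$ for $x'\ge x$.

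Your passage to logarithmic coordinates replaces all of this with the single observation that $\rho=-\log r$ is continuous, piecewise linear, and positively homogeneous of degree $1$ on $(0,\infty)^2$, so that every level set is a star-shaped curve admitting a global polar parametrization. This dispenses entirely with the case analysis on $\sgn(1+\dlt_1(i))$ for the Jordan-arc property, reduces the graph criterion to injectivity of the one-variable function $\sigma(m)=m+s(A,m)-s(B,m)$, and yields the trichotomy cleanly from a two-sided bi-Lipschitz bound (your $c_-\ge 1$ is exactly the integer observation $1+\dlt_1(i)\ge 1$ that the paper also uses). The paper's approach, in exchange, produces finer geometric information along the way --- the explicit corner points $p_i(\ep)=(\lbd_i(\ep)^{m_i},\lbd_i(\ep))$ and the rectangle containment --- which it later exploits in Section~\ref{sec:gener} when computing $\phi_\ep$ and $b_\ep$ piecewise; your argument would recover these by reading off where the level line of $\rho$ crosses each cone boundary $q=m_ip$, but does not need them for the theorem as stated.
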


Before proving this result, which takes a bit of effort, we record the definition that it justifies. We reserve the word ``upright'' to refer to the case where $\Phi(\ep)$ is the graph of a function $\lbd \mapsto\phi_\ep(\lbd)$.

\begin{definition}[parametrized limit ranges]\label{def:prmtrzd-lim-scales}
In the context of Theorem \ref{thm:prmtrzd-limits}, and assuming $F=O(G)$ around $(0,0)$, define the following \emph{limit ranges} for $(x_\ep)$:
\enumrom
\item $r(1/a_\ep,\lbd_\ep) \ll \ep^2$ is the \emph{pure diffusive range},
\item $r(1/a_\ep,\lbd_\ep) \gg \ep^2$ is the \emph{deterministic range}, and
\item $r(1/a_\ep,\lbd_\ep) \asymp \ep^2$ is the \emph{drift-diffusion (dd) scale}.
\enumend
The set $\Phi(\ep)$ from \eqref{eq:dd-set} is the \emph{dd curve}. Say that the dd curve is \emph{upright} if $\lph_1(i)\ge\beta_1(i)$ for every $i\in \{0,\dots,n\}$.
\end{definition}

\begin{proof}[Proof of Theorem \ref{thm:dd-curve}]
For $i\in \{1,\dots,n\}$ let $L_i = \{(x,\lbd)\in[0,1]^2 \colon x=\lbd^{m_i}\}$, and let $L_0=[0,1]\times\{0\}$ and $L_{n+1} = [0,1]\times\{1\}$. In addition, for $i\in \{0,\dots,n\}$ let $S_i = \{(x,\lbd)\in [0,1]^2 \colon \lbd^{m_{i+1}} \le x \le \lbd^{m_i}\}$. The steps to the proof are as follows: for each $i$ and $\ep \in (0,1)$, we show that
\enumar
\item $\Phi(\ep) \cap L_i$ is a singleton $p_i=(x_i,\lbd_i)$ with $x_i>0$ and $p_i(\ep) \to 0$ as $\ep \to 0$.
\item $\Phi(\ep)\cap S_i$ is a smooth Jordan arc connecting $p_i(\ep)$ to $p_{i+1}(\ep)$.
\item $\Phi(\ep) \cap S_i$ is contained in the rectangle with corners $p_i(\ep)$ and $p_{i+1}(\ep)$.
\item Moving from $p_i(\ep)$ to $p_{i+1}(\ep)$ along $\Phi(\ep) \cap S_i$, $\lbd$ increases iff $\lph_1(i) \ge \beta_1(i)$.
\enumend
Once these are established, the proof is completed as follows. Connecting the arcs $\Phi(\ep) \cap S_i$ from step 2 in increasing order of $i$, we obtain an injective continuous function $\psi_\ep:[0,1] \to [0,1]^2$ with $\psi_\ep(0)=p_0(\ep)\in (0,1)\times \{0\}$ and $\psi_\ep(1)=p_{n+1}(\ep) \in (0,1)\times \{1\}$. Using the second half of step 1 together with step 3, we find that $\sup\{x\colon (x,\lbd) \in \Phi(\ep)\}  = \sup \{x\colon (x,\lbd) \in \bigcup_i p_i(\ep)\}\to 0$ as $\ep \to 0$. Letting $\pi_2(x,\lbd)=\lbd$ denote projection onto the $\lbd$ coordinate, since $\psi$ is continuous and has the given endpoints it follows that $\pi_2(\Phi(\ep)) = \pi_2(\psi_\ep([0,1])) = [0,1]$, so $\Phi(\ep)$ is the graph of a function $\lbd\mapsto \phi_\ep(\lbd)$ if and only if $u\mapsto \pi_2(\psi_\ep(u))$ is increasing, i.e., iff for each $i$, the value of $\lbd$ increases as we move from $p_i(\ep)$ to $p_{i+1}(\ep)$ along $\Phi(\ep)\cap S_i$, which by step 4 holds iff $\lph_1(i)\ge \beta_1(i)$ for every $i$. Lastly, if $\lph_1(i)\ge \beta_1(i)$ for every $i$ then $\dlt_1(i)\ge 0$ for each $i$, and since $r(x,\lbd)=x^{1+\dlt_1(i)}\lbd^{\dlt_2(i)}$ on each $S_i$ with the expressions matching along $L_i$, it follows that for $x,x'$ and fixed $\lbd$, $r(x',\lbd)/r(x,\lbd) \ge x'/x$. Since $r(\phi_\ep(\lbd_\ep),\lbd_\ep) = \ep^2$ by definition, the last statement concerning $\phi_\ep$ follows directly.\\

\nid\textbf{Step 1.} We begin with $L_1,\dots, L_n$. If $(x,\lbd) \in L_i$ and $i\in\{1,\dots,n\}$ then noting that $s(\lph(i),m_i)=s(A,m_i)$ and $s(\beta(i),m_i)=s(B,m_i)$,
\begin{align}\label{eq:rLi}
r(x,\lbd)=\lbd^{m_i(1+\lph_1(i)-\beta_1(i))}\lbd^{\lph_2(i)-\beta_2(i)}=\lbd^{m_i + s(A,m_i) - s(B,m_i)}.
\end{align}
If $F=O(G)$ then by Lemma \ref{lem:FOGenv}, $s(A,m_i)\ge s(B,m_i)$. Therefore $m_i + s(A,m_i)-s(B,m_i)>0$, so $\lbd \mapsto \gma_i(\lbd) := r(\lbd^{m_i},\lbd)$ is strictly increasing. Since $\gma_i(0)=0$ and $\gma_i(1)=1$, if $\ep \in (0,1)$ then $\Phi(\ep) \cap L_i= \{p_i\}$ where $p_i=(x_i,\lbd_i) = (\lbd_i^{m_i},\lbd_i)$ and $\lbd_i$ is the unique solution to $\gma_i(\lbd) = \ep^2$. Denoting it $p_i(\ep)$, it follows that $p_i(\ep) \in (0,1)^2$ for each $\ep\in(0,1)$ and $p_i(\ep)\to 0$ as $\ep \to 0$.\\

We now treat $L_0$ and $L_{n+1}$. If $(x,\lbd) \in L_0$ then $\lbd=0$ so $x \ge \lbd^m$ for any $m>0$, which means that $r(x,0) = x^{1+\lph_1(0)-\beta_1(0)}0^{\dlt_2(0)}$. By assumption, $A$ is an envelope, so is disordered; since $\lph(0)=\arg\max\{\lph_1 \colon \lph \in A\}$, also $\lph(0)=\arg\min\{\lph_2\colon \lph \in A\}$. Since $A$ has an element of the form $(\lph_1,0)$, $\lph_2(0)=0$. Since $F=O(G)$, by Lemma \ref{lem:FOGenv}, $\lph_2(0) \ge \beta_2(0)$ so $\beta_2(0)=\lph_2(0)=0$, which implies $\lph_1(0)\ge \beta_1(0)$ and $x\mapsto r(x,0)$ is strictly increasing, and as above we obtain $p_0(\ep)$ with the stated properties. If $(x,\lbd) \in L_{n+1}$ then $\lbd=1$ so $x\le \lbd^m$ for any $m<\infty$, which means that $r(x,1) = x^{1+\lph_1(n)-\beta_1(n)}1^{\dlt_2(n)}$. Since $F=O(G)$, by Lemma \ref{lem:FOGenv} $\lph_1(n)\ge \beta_1(n)$, so $x\mapsto r(x,1)$ is increasing and as above we obtain $p_{n+1}(\ep)$ with the stated properties.\\

\nid\textbf{Step 2.} Recall that $\dlt(i)=\lph(i)-\beta(i)$. If $1\le i \le n$, then since $s(\lph(i),m_i)=s(A,m_i)\ge s(B,m_i)=s(\beta(i),m_i)$ from Lemma \ref{lem:FOGenv}, if $\dlt_1(i)\le 0$ then $\dlt_2(i)\ge 0$ and if $\dlt_2(i) \le 0$ then $\dlt_1(i)\ge 0$, and the same is true with $<,>$ in place of $\le,\ge$. Since as noted above, $\lph_1(i)\ge \beta_1(i)$ for $i\in\{0,n+1\}$, $\dlt_1(i)\ge 0$ for $i\in \{0,n+1\}$.\\

First note that $p_i(\ep),p_{i+1}(\ep) \in \Phi(\ep) \cap S_i$. Let $r_i(x,\lbd) = x^{1+\dlt_1(i)}\lbd^{\dlt_2(i)}$ so that $r=r_i$ on $S_i$, and let $\Phi_i(\ep) = \{(x,\lbd)\in [0,1]^2\colon r_i(x,\lbd)=\ep^2\}$, so that $\Phi_i(\ep)\cap S_i = \Phi(\ep) \cap S_i$. In particular, $\Phi_i(\ep)\cap L_i = \{p_i(\ep)\}$ and $\Phi_i(\ep) \cap L_{i+1} = \{p_{i+1}(\ep)\}$. We break into cases according to the sign of $1+\dlt_1(i)$.
\enumrom
\item \textit{Case 1: $1+\dlt_1(i)=0$}. Then, $\dlt_1(i)<0$ so $\dlt_2(i)>0$, and the condition $r_i(x,\lbd)=\ep^2$ gives $\lbd = \ep^{2/\dlt_2(i)}$ which is a vertical line. It follows that $\Phi(\ep)\cap S_i$ is a vertical line segment connecting $p_i(\ep)$ to $p_{i+1}(\ep)$.\\

\item \textit{Case 2: $1+\dlt_1(i) \ne 0$.} The condition $r_i(x,\lbd)=\ep^2$ gives
\begin{align}\label{eq:dd-xvalue}
x = \ep^{2/(1+\dlt_1(i))}\lbd^{-\dlt_2(i)/(1+\dlt_1(i))},
\end{align}
so $\Phi_i(\ep)$ is the graph of a function of $\lbd$. Moreover, for fixed $\lbd$, $x\mapsto r_i(x,\lbd)$ is increasing if $1+\dlt_1(i)>0$, and is decreasing if $1+\dlt_1(i)<0$. We consider each subcase separately.
\enumalph
\item $1+\dlt_1(i)>0$. In this case $x\mapsto r_i(x,\lbd)$ is increasing. As shown above, $r$ (and thus $r_i$) increases with $\lbd$ along each $L_i$, $i \in \{1,\dots,n\}$. Denoting $p_i(\ep)$ by $(x_i(\ep),\lbd_i(\ep))$, it follows that $\lbd_{i+1}(\ep) > \lbd_i(\ep)$ for all $i$; for $i=0$ and $i=n$ this follows since $\lbd_0(\ep)=0$ and $\lbd_{n+1}(\ep)=1$, and for $i\in\{1,\dots,n-1\}$ it's because, by moving vertically down from $p_i(\ep)$ onto $L_{i+1}$, $r_i$ decreases below $\ep^2$, so for $r_i$ to reach $\ep^2$ along $L_{i+1}$, $\lbd$ must be increased. Using monotonicity of $r_i$ on both $S_i$ and $L_i$, if $x\le \lbd^{m_i}$  and $\lbd<\lbd_i(\ep)$ then $r(x,\lbd)<\ep^2$, and if $x\ge \lbd^{m_{i+1}}$ and $\lbd>\lbd_{i+1}(\ep)$ then $r(x,\lbd)>\ep^2$. It follows that $\Phi_i(\ep)\cap S_i$ is contained in the strip $\Lbd_i:=\{(x,\lbd)\colon \lbd_i(\ep) \le \lbd \le \lbd_{i+1}(\ep)\}$. Since $\Phi_i(\ep)\cap L_i = \{p_i(\ep)\}$ and $\Phi_i(\ep) \cap L_{i+1} = \{p_{i+1}(\ep)\}$, it follows from the intermediate value theorem that $\Phi_i(\ep) \cap S_i = \Phi_i(\ep) \cap \Lbd_i$. Since $\Phi_i(\ep) \cap \Lbd_i$ is the graph of the restriction of a smooth function to a closed interval, it is a Jordan arc.
\item $1+\dlt_1(i)<0$. Analogous arguments show that $\lbd_{i+1}(\ep)<\lbd_i(\ep)$, that $\Phi_i(\ep) \cap S_i = \Phi_i(\ep) \cap \Lbd_i$ where $\Lbd_i = \{(x,\lbd)\colon \lbd_{i+1}(\ep) \le \lbd \le \lbd_i(\ep)\}$, and that $\Phi_i(\ep) \cap \Lbd_i$ is a Jordan arc.
\enumend
\enumend
\nid\textbf{Step 3.} Referring to the above three cases, when $1+\dlt_1(i)=0$ the result is trivial. When $1+\dlt_1(i) \ne 0$, the result follows from $\Phi_i(\ep)\cap S_i \subset \Lbd_i$ and the fact that $\Phi_i(\ep)$ is the graph of a monotone function of $\lbd$.\\

\nid\textbf{Step 4.} Referring to the above three cases, moving from $p_i(\ep)$ to $p_{i+1}(\ep)$ along $\Phi(\ep)$,
\itemgo
\item if $1+\dlt_1(i)=0$ then $\lbd$ is constant,
\item if $1+\dlt_1(i)>0$ then $\lbd$ increases, and
\item if $1+\dlt_1(i)<0$ then $\lbd$ decreases.
\itemend
In particular, $\lbd$ increases iff $1+\dlt_1(i) = 1+\lph_1(i)-\beta_1(i)>0$, i.e., $\lph_1(i)> \beta_1(i)-1$, which since both are integer-valued is equivalent to $\lph_1(i)\ge \beta_1(i)$.
\end{proof}

We close this section with a simple example showing the possibility of both upright and non-upright dd curves, in particular the transition from upright, to vertical, to folded dd curve.

\begin{example}\label{ex:uprex}
Consider the case with $A = \{(4,0),(1,2)\}$ and $B=\{(k,0),(0,k)\}$ where $k\in \{1,2,3\}$ is fixed, which has $A=\piv(A)$ and $B=\piv(B)$. One easily verifies from Figure \ref{fig:uprex1} that $\piv(A)\subset S(B)$ as required by Lemma \ref{lem:FOGenv}. Then $M(A)= \{2/3\}$ and $M(B)=\{1\}$, so $M:=M(A)\cup M(B)=\{m_1,m_2\}$, with $m_1=2/3$ and $m_2=1$. We have $\lph(0)=(4,0)$ and $\lph(1)=\lph(2)=(1,2)$, and $\beta(0)=\beta(1)=(k,0)$ and $\beta(2)=(0,k)$, so $\dlt(0) = (4-k,0)$, $\dlt(1) = (1-k,2)$ and $\dlt(2) = (1,2-k)$. In particular, the dd curve is upright, i.e., $\dlt_1(i)\ge 0$ for all $i$, iff $k=1$. The dd curve is depicted for each case in Figure \ref{fig:uprex2}.
\end{example}

\begin{figure}
\begin{center}
\includegraphics[width=2in]{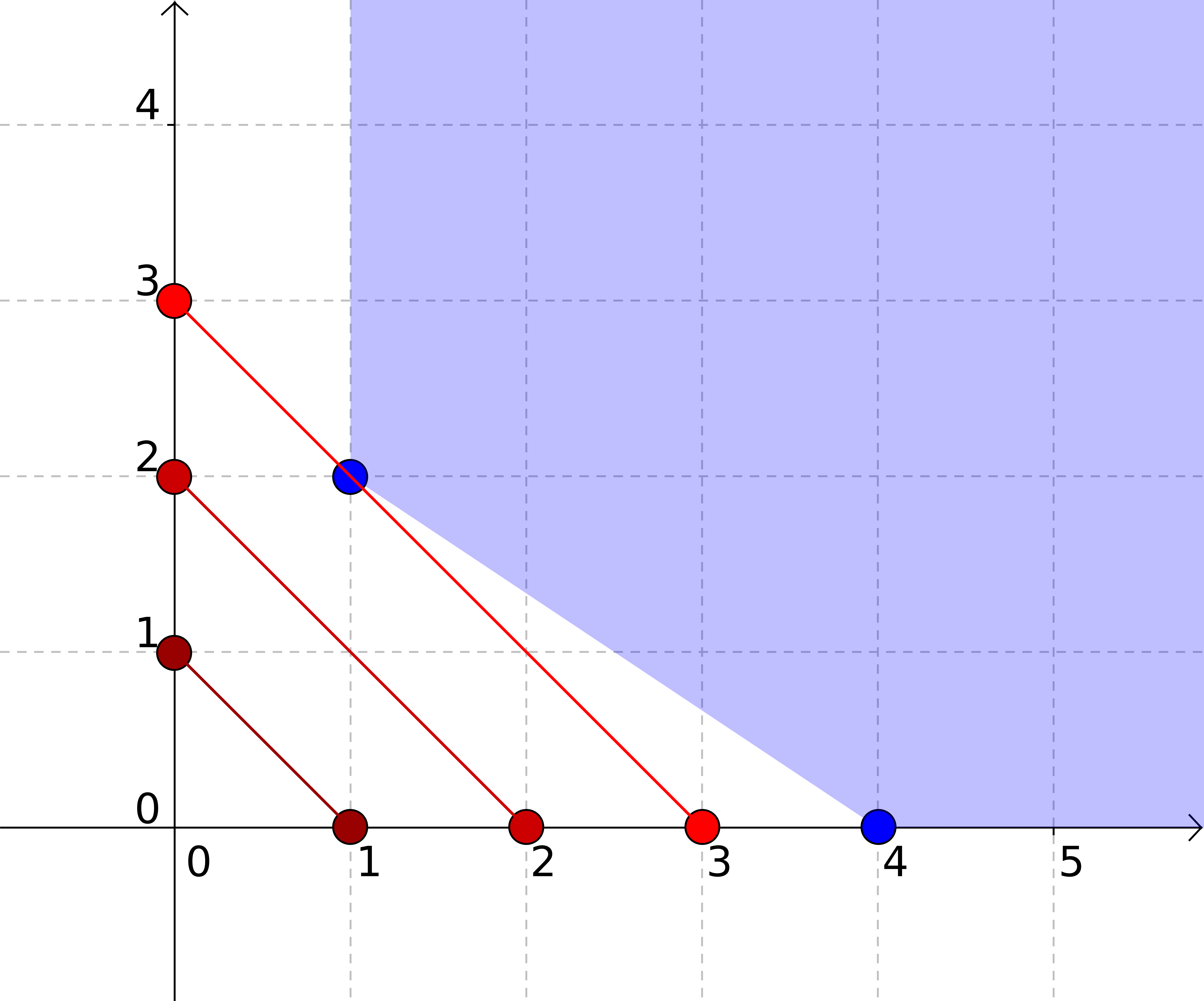}\hspace{.05in}
\end{center}
\caption{a graph depicting $A$ (blue dots), $S(A)$ (blue region) and the three options for $B$ (red dots of darkening shade) in Example \ref{ex:uprex}, with a portion of the contour $L(B)$ (red segments) in each case.}
\label{fig:uprex1}
\end{figure}

\begin{figure}
\begin{center}
\includegraphics[width=1.8in]{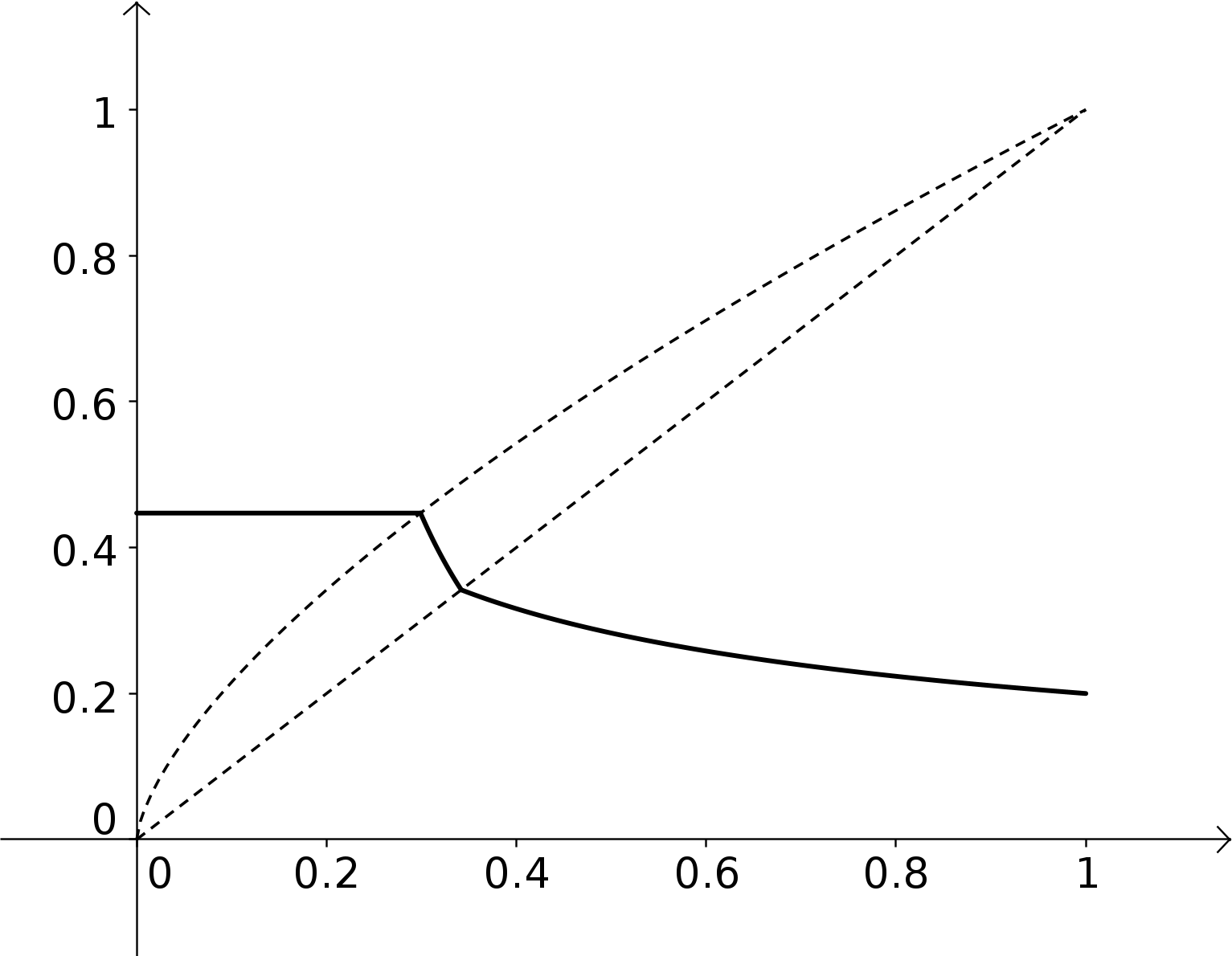}\hspace{.1in}
\includegraphics[width=1.8in]{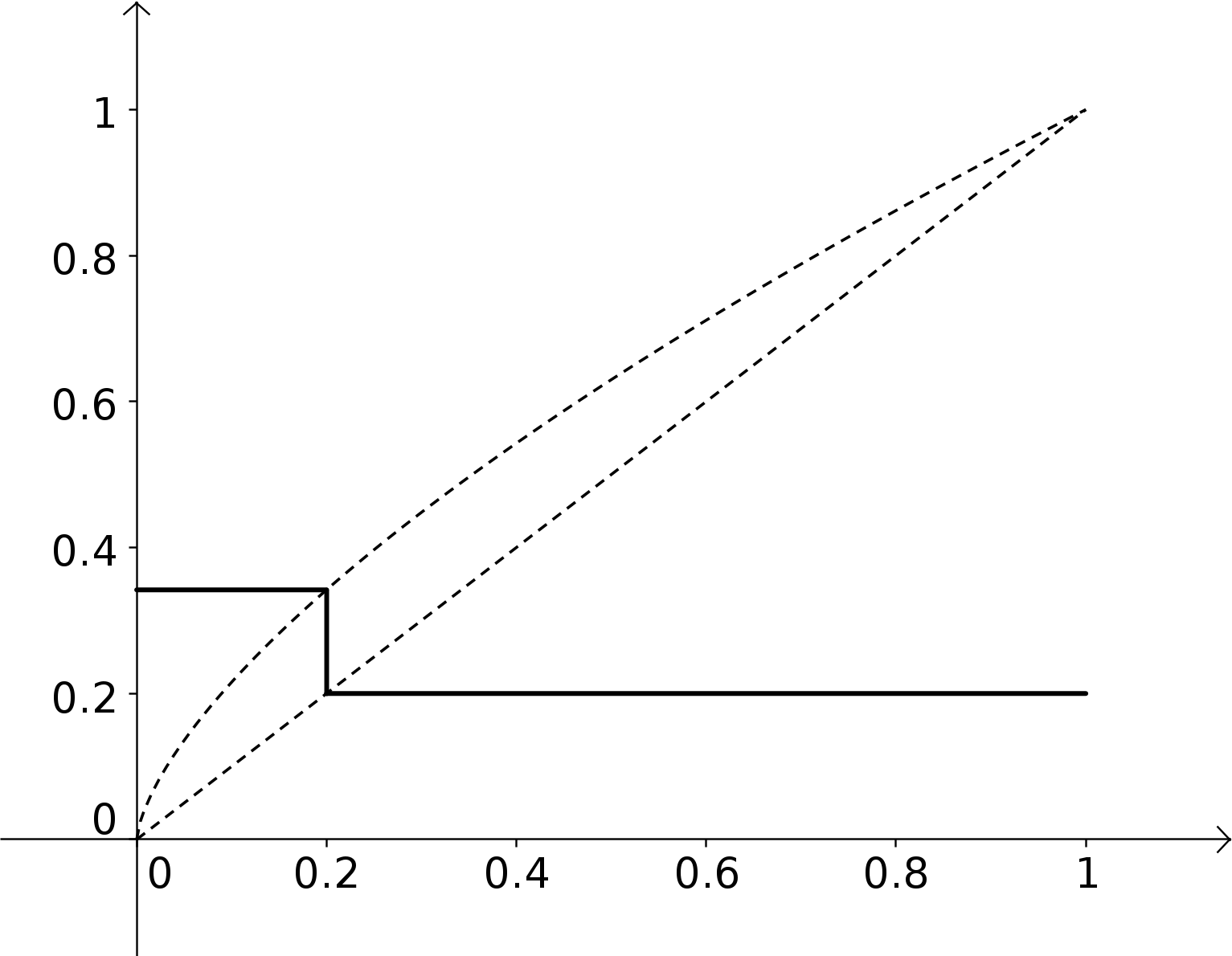}\hspace{.1in}
\includegraphics[width=1.8in]{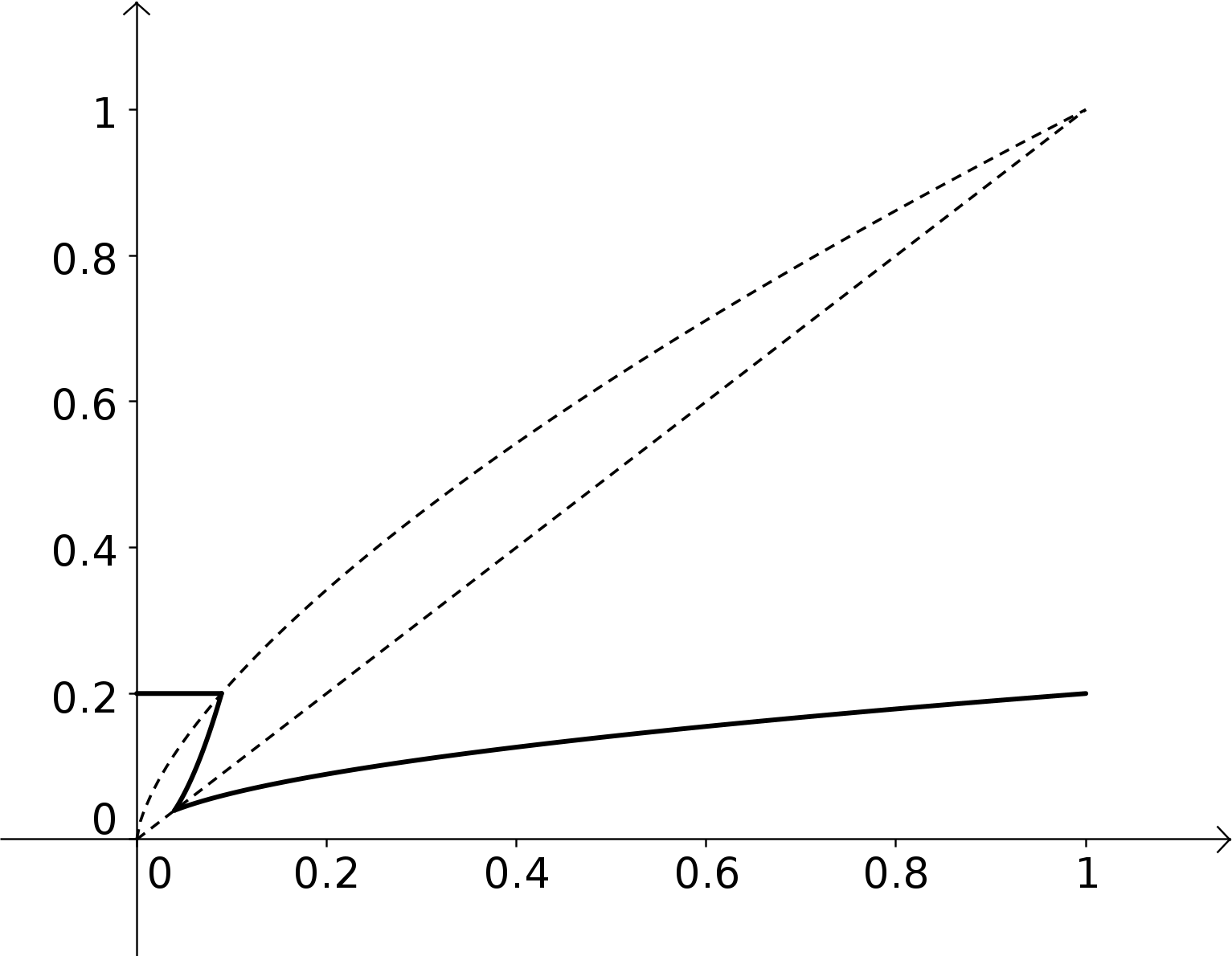}
\end{center}

\caption{The dd curve $\Phi(\ep)$ of \eqref{eq:dd-set} (bold) drawn with $\ep=0.2$ and the lines $L_i=\{(x,\lbd)\colon x=\lbd^{m_i}\}$ (dotted) for Example \ref{ex:uprex} with $k=1,2,3$ from left to right, plotted with vertical $x$ and horizontal $\lambda$.}
\label{fig:uprex2}
\end{figure}

\subsection{Limit scales around a simple equilibrium branch}\label{sec:float-equil}

We now treat case (ii) from the top of Section \ref{sec:prmtzd}: limits around a non-constant branch $x_\star(\lbd)$. Since the goal is to use this for bifurcation theory, we will focus on the case where $x_\star(\lbd)$ is a non-constant simple branch of the zeroset of $F$, i.e., $\lbd \mapsto x_\star(\lbd)$ is non-constant and for each $\lbd$, $x_\star(\lbd)$ is a simple root of $F$. To do so, in the context of Section \ref{sec:const-equil}, suppose there is $\star \in \{1,\dots,n\}$ and $z_\star>0$ such that 
\begin{align}\label{eq:Feqbr}
\sum_{\lph \in A(m_\star)}c_\lph z_\star^{\lph_1}=0 \quad \text{and} \quad \frac{d}{dz}\sum_{\lph \in A(m_\star)}c_\lph z^{\lph_1}\big|_{z=z_\star} \ne 0.
\end{align}
For $\lbd \in (0,1)$ define $ F_\star(z,\lbd) = \lbd^{-s(A,m_\star)}F(z\lbd^{m_\star},\lbd)$, then examining the scale functions for $F$ on $R_{2i-1}$ from Lemma \ref{lem:dom-terms}, we have
$$F_\star(z,\lbd) \sim \sum_{\alpha \in A(m_\star)}c_\lph z^{\lph_1}$$
locally uniformly in $z$, as $\lbd \to 0$. In particular, $F_\star$ can be smoothly extended to $\lbd=0$ by defining $F_\star(z,0) = \sum_{\lph \in A(m_\star)}c_\lph z^{\lph_1}$. By assumption, $F_\star(z_\star,0)=0$ and $\ptl_z  F_\star(z,0) \ne 0$. By the implicit function theorem, there is a function $z_\star(\lbd)$, defined on some interval $[0,\lbd_0)$, such that $F_\star(z_\star(\lbd),\lbd)=0$, and $\ptl_z F_\star(z_\star(\lbd),\lbd) = \ptl_z F_\star(z_\star,0) + o(1) \ne 0$. Letting $x_\star(\lbd) = \lbd^{m_\star}z_\star(\lbd)$, $F(x_\star(\lbd),\lbd)=0$ for $0 \le \lbd <\lbd_0$. \\

We are interested in the scaling of $F,G$ around $x_\star(\lbd)$. Specifically, we seek limits of the form
\begin{align}\label{eq:FG-branch-lim}
\tF(x) &:= \lim_{\ep \to 0}a_\ep b_\ep F(x_\star(\lbd_\ep) + x/a_\ep,\lbd_\ep) \ \text{and} \nonumber \\
\tG(x) &:= \lim_{\ep \to 0}\ep^2 a_\ep^2 b_\ep G(x_\star(\lbd_\ep) + x/a_\ep,\lbd_\ep).
\end{align}
The cases $1/a_\ep \asymp x_\star(\lbd_\ep)$ and $1/a_\ep \gg x_\star(\lbd_\ep)$ are covered by Theorem \ref{thm:prmtrzd-limits}, since if $1/a_\ep \asymp x_\star(\lbd_\ep)$ then \eqref{eq:FG-par-lim} holds iff \eqref{eq:FG-branch-lim} holds, with the argument of $\tF,\tG$ shifted by $\lim_{\ep \to 0}a_\ep x_\star(\lbd_\ep)$, while if $1/a_\ep \gg x_\star(\lbd_\ep)$ then \eqref{eq:FG-par-lim} holds iff \eqref{eq:FG-branch-lim} holds, with the same $\tF,\tG$. Thus, it remains to consider $1/a_\ep \ll x_\star(\lbd_\ep)$. To keep things simple, we shall assume that $F=O(G)$, which constrains the behaviour of $G$ rather nicely. We'll again follow the three steps of Section \ref{sec:iso}: shape, drift to diffusion ratio and time scale.\\

\nid\textit{Shape.} The notion of limit scales is again applicable with the obvious adjustments. Specifically, we seek to decompose the right-hand side of the equations in \eqref{eq:FG-branch-lim} in such a way that $\tF,\tG$ satisfy \eqref{eq:tF-tG-ratio2}, which we recall here for convenience:
$$\tF(x) \sim h_\ep Q(x) \quad \text{and} \quad \tG(x) \sim \ell_\ep V(x)$$
for some $h_\ep,\ell_\ep$ and $Q,V$. By assumption,
$$F_\star(z_\star(\lbd) + z,\lbd) \sim \ptl_zF_\star(z_\star,0)\, z \ \ \text{as} \ \ |z|+|\lbd| \to 0,$$
and by definition,
$$F(x_\star(\lbd) + x,\lbd)=\lbd^{s(A,m_\star)}F_\star(z_\star(\lbd) + x/\lbd^{m_\star},\lbd),$$
so if $x\ll \lbd^{m_\star}$ then
$$F(x_\star(\lbd)+x,\lbd) \sim \lbd^{s(A,m_\star)-m_\star}\ptl_z F_\star(z_\star,0)\, x \ \ \text{as} \ \ \lbd \to 0.$$
Since $x_\star(\lbd) \sim \lbd^{m_\star}z_\star$ as $\lbd \to 0$, if $1/a_\ep \ll x_\star(\lbd_\ep)$ and $x=O(1)$ then $x/a_\ep \ll \lbd^{m_\star}$ and
\begin{align}\label{eq:Fbr-scale}
F(x_\star(\lbd_\ep) + x/a_\ep,\lbd_\ep) \sim (1/a_\ep)\lbd^{s(A,m_\star)-m_\star}\ptl_z F_\star(z_\star,0)\, x.
\end{align}
In particular, $x \mapsto F(x_\star(\lbd_\ep)+x/a_\ep,\lbd_\ep)$ is asymptotically linear as $\ep \to 0$, and in \eqref{eq:tF-tG-ratio2} we can take
$$h_\ep = b_\ep \lbd_\ep^{s(A,m_\star)-m_\star} \ \ \text{and} \ \ Q(x) = \ptl_z F_\star(z_\star,0) \, x.$$
For $G$, define $G_\star(z,\lbd) = \lbd^{-s(B,m_\star)}G(z\lbd^{m_\star},\lbd)$, then similarly to $F_\star$,
$$G_\star(z,\lbd) \sim \sum_{\beta \in B(m_\star)}c_\beta z^{\beta_1},$$
so extend to $\lbd=0$ by letting $G_\star(z,0) = \sum_{\beta \in B(m_\star)}c_\beta z^{\beta_1}$. By assumption, $G$ is non-negative, so $G_\star(z,\lbd) \ge 0$ for all $z,\lbd$, including $\lbd=0$ by continuity. In particular, any zero of the function $z \mapsto G_\star(z,0)$ must have even multiplicity. If $G_\star(z_\star,0)=0$ then since $\ptl_z F_\star(z_\star,0) \ne 0$ and $F=O(G)$ by assumption, $\ptl_z G_\star(z_\star,0) \ne 0$ which implies $G_\star(z,0)<0$ for some $z$ near $z_\star$, a contradiction. It follows that $G_\star(z_\star,0)>0$, and consequently that
$$G_\star(z_\star(\lbd)+z,\lbd) = G_\star(z_\star,0) + o(1) \ \ \text{as} \ \ |z|+|\lbd| \to 0.$$
Thus if $1/a_\ep \ll x_\star(\lbd_\ep)$ and $x=O(1)$ then $x/a_\ep \ll \lbd^{m_\star}$ and
\begin{align}\label{eq:Gbr-scale}
G(x_\star(\lbd_\ep)+x/a_\ep,\lbd_\ep) \sim \lbd_\ep^{s(B,m_\star)} G_\star(z_\star,0),
\end{align}
so in \eqref{eq:tF-tG-ratio2} we can take
$$\ell_\ep = \ep^2 a_\ep^2 b_\ep \lbd_\ep^{s(B,m_\star)} \ \ \text{and} \ \ V(x) = G_\star(z_\star,0).$$

\nid\textit{Drift to diffusion ratio.} Following the recipe of Sections \ref{sec:iso} and \ref{sec:const-equil},
$$\frac{h_\ep}{\ell_\ep} = \frac{\lbd_\ep^{s(A,m_\star)-s(B,m_\star)-m_\star}}{\ep^2 a_\ep^2},$$
so $h_\ep \ll \ell_\ep \Leftrightarrow 1/a_\ep \ll \ep \lbd_\ep^{\gma_\star}$, where $\gma_\star = (m_\star + s(B,m_\star) - s(A,m_\star))/2$, similarly with $\asymp,\gg$ in place of $\ll$.\\

\nid\textit{Time scale.} $(b_\ep)$ is again determined by setting $h_\ep \asymp 1$ or $\ell_\ep \asymp 1$, so we'll just state the result.

\begin{theorem}\label{thm:eq-branch}
Let $x_\star \in \R^d$ and let $\tld U$ be a non-empty open convex set whose closure contains $0$. Let $\lbd$ be a parameter taking values in an interval $I$ containing $0$. Assume that for each domain $D\subset\subset \tld U$, $(x_\ep(t ;\lbd_\ep))_{t\ge 0}$ is a strongly stochastic parametrized QDP on $(D_\ep):=(\{x_\star(\lbd_\ep) + x/a_\ep\colon x \in D\})$ to scale $(a_\ep),(b_\ep)$ with characteristics $F,G$ that have a Taylor expansion around $(0,0)$ as in \eqref{eq:loTaylor}. Suppose in addition that $F$ satisfies \eqref{eq:Feqbr} for some $i$. Let $x_\star(\lbd)$ denote the corresponding branch and let $\gma_\star = (m_\star + s(B,m_\star) - s(A,m_\star))/2$.\\

Let $Y_\ep(t) = a_\ep(x_\ep(b_\ep t;\lbd_\ep)-x_\star(\lbd_\ep))$ and suppose that $a_\epsilon \to \infty$. If $1/a_\ep \ll x_\star(\lbd_\ep)$ and $(a_\ep),(b_\ep),(\lbd_\ep)$ satisfy one of the sets of conditions below, then $(Y_\ep)$ is a QD with characteristics $\tF,\tG$ as described below, where
$$Q(x) = \ptl_z F_\star(z_\star,0)\, x \ \ \text{and} \ \ V(x) = G_\star(z_\star,0)$$
and $F_\star,G_\star$ are as given in the discussion above.
\itemgo
\item If $1/a_\ep \ll \ep \lbd_\ep^{\gma_\star}$ and $b_\ep \asymp \ep^{-2}a_\ep^{-2}\lbd_\ep^{-s(B,m_\star)}$ then $\tld F=0$ and $\tld G \propto V$.
\item If $1/a_\ep \asymp \ep \lbd_\ep^{\gma_\star}$ and $b_\ep \asymp \lbd_\ep^{m_\star-s(A,m_\star)}$ then $\tld F \propto Q$ and $\tld G=0$.
\item If $1/a_\ep \gg \ep \lbd_\ep^{\gma_\star}$ and $b_\ep$ satisfies either condition above then $\tld F \propto Q$ and $\tld G \propto V$.
\itemend
\end{theorem}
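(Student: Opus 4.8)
The plan is to deduce the theorem from Corollary \ref{cor:QDP-QD}, in the mild variant where the fixed reference point is replaced by the $\ep$-dependent sequence $x_\star(\lbd_\ep)$. The proof of Lemma \ref{lem:QDP-QD} uses constancy of the center only to rewrite $x_\ep(b_\ep s;\lbd_\ep)=x_\star(\lbd_\ep)+X_\ep(s)/a_\ep$ inside the pathwise integrals $\int_0^{b_\ep t}F(x_\ep(s;\lbd_\ep))\,ds$ and $\int_0^{b_\ep t}G(x_\ep(s;\lbd_\ep))\,ds$, and this identity is insensitive to whether the center moves with $\ep$; so the same argument, applied to the QDP hypothesis in force here, reduces the claim that $(Y_\ep)$ is a QD with characteristics $\tF,\tG$ to the single statement that the limits in \eqref{eq:FG-branch-lim} exist and converge uniformly on compact subsets of $\tld U$, with the $\tF,\tG$ listed in the three cases. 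The whole proof is then the verification of those locally uniform limits, for which the scale decompositions \eqref{eq:Fbr-scale}--\eqref{eq:Gbr-scale} of the preceding discussion carry the content.

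First I would turn the asymptotics in that discussion into genuinely locally uniform ones. By Lemma \ref{lem:dom-terms}(ii) applied on the strip around $x=\lbd^{m_\star}$, $F_\star(z,\lbd)\to F_\star(z,0)$ and $G_\star(z,\lbd)\to G_\star(z,0)$ as $\lbd\to0$, uniformly for $z$ in compact subsets of $(0,\infty)$; the implicit function theorem then produces $z_\star(\lbd)$ on some interval $[0,\lbd_0)$ with $F_\star(z_\star(\lbd),\lbd)=0$ and $\ptl_zF_\star(z_\star(\lbd),\lbd)\to\ptl_zF_\star(z_\star,0)\ne0$. Taylor-expanding $F_\star$ in $z$ about $z_\star(\lbd)$ with a remainder that is uniform in $\lbd$ (from the Taylor structure of $F$ near $(0,0)$), substituting the rescaled displacement $z=x/(a_\ep\lbd_\ep^{m_\star})$, and using that $1/a_\ep\ll x_\star(\lbd_\ep)\asymp\lbd_\ep^{m_\star}$ makes $x/(a_\ep\lbd_\ep^{m_\star})\to0$ uniformly over $x$ in any fixed compact set, yields \eqref{eq:Fbr-scale} with the stated linear $Q$. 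For $G$ the extra input is positivity: $G\ge0$ forces $G_\star(\cdot,0)\ge0$, so every zero of $z\mapsto G_\star(z,0)$ has even order, while $\ptl_zF_\star(z_\star,0)\ne0$ together with $F=O(G)$ would make $\ptl_zG_\star(z_\star,0)\ne0$ at a zero of $G_\star(\cdot,0)$ --- impossible for a non-negative function. Hence $G_\star(z_\star,0)>0$, so $G_\star(z_\star(\lbd)+z,\lbd)\to G_\star(z_\star,0)$ as $|z|+|\lbd|\to0$, giving \eqref{eq:Gbr-scale} with the constant $V$.

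Multiplying \eqref{eq:Fbr-scale}--\eqref{eq:Gbr-scale} by $a_\ep b_\ep$ and $\ep^2a_\ep^2b_\ep$ respectively puts them in the form \eqref{eq:tF-tG-ratio2} with $h_\ep=b_\ep\lbd_\ep^{s(A,m_\star)-m_\star}$ and $\ell_\ep=\ep^2a_\ep^2b_\ep\lbd_\ep^{s(B,m_\star)}$, so that $h_\ep/\ell_\ep=\big((1/a_\ep)/(\ep\lbd_\ep^{\gma_\star})\big)^{2}$ and $h_\ep$ is $\ll$, $\asymp$ or $\gg$ $\ell_\ep$ precisely when $1/a_\ep$ is $\ll$, $\asymp$ or $\gg$ $\ep\lbd_\ep^{\gma_\star}$. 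Exactly as in Sections \ref{sec:iso}--\ref{sec:const-equil}, the prescribed choice of $(b_\ep)$ in each case normalizes the dominant one of $h_\ep,\ell_\ep$ (or both, at the balanced scale) to $\asymp1$ and sends the other to $0$; reading off $\tF,\tG$ and invoking Corollary \ref{cor:QDP-QD} finishes.

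The main obstacle is the uniform bookkeeping in the second step: the branch value $z_\star(\lbd)$ moves with $\lbd$ and the implicit function theorem controls only a shrinking $z$-neighbourhood of it, so one must check that the rescaled displacement $x/(a_\ep\lbd_\ep^{m_\star})$ enters that neighbourhood uniformly over compact $x$ --- which is exactly where the hypothesis $1/a_\ep\ll x_\star(\lbd_\ep)$ is spent. The brief positivity argument $G_\star(z_\star,0)>0$ is equally load-bearing, since without it the conclusion $\tG\propto V$ in the pure diffusive and balanced cases could degenerate.
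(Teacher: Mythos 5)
Your proof is correct and follows essentially the same route as the paper, whose own proof simply invokes Corollary \ref{cor:QDP-QD} with the moving center $x_\star(\lbd_\ep)$ and defers to the preceding discussion for the locally uniform convergence in \eqref{eq:FG-branch-lim}; you have filled in exactly that discussion (the $F_\star,G_\star$ rescalings, the implicit function theorem for $z_\star(\lbd)$, the positivity argument showing $G_\star(z_\star,0)>0$, and the ratio computation $h_\ep/\ell_\ep=\bigl((1/a_\ep)/(\ep\lbd_\ep^{\gma_\star})\bigr)^2$). One remark: your case assignment ($1/a_\ep\asymp\ep\lbd_\ep^{\gma_\star}$ gives both $\tF\propto Q$ and $\tG\propto V$, while $1/a_\ep\gg\ep\lbd_\ep^{\gma_\star}$ gives $\tG=0$) is the one consistent with Definition \ref{def:branch-lim-scales} and with the relation $h_\ep\ll\ell_\ep\Leftrightarrow 1/a_\ep\ll\ep\lbd_\ep^{\gma_\star}$, so the printed statement of the theorem appears to have the conclusions of its second and third bullets transposed.
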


\begin{proof}
It is trivial to check that Corollary \ref{cor:QDP-QD} holds with $x_\star(\lbd_\ep)$ in place of $x_\star$. The result then follows from locally uniform convergence in \eqref{eq:FG-branch-lim}, which follows from the discussion.
\end{proof}

As we did before, we define the relevant limit ranges. In this case, it is important that we restrict to $1/a_\ep \ll x_\star(\lbd_\ep)$.

\begin{definition}[limit ranges for a simple equilibrium branch]\label{def:branch-lim-scales}
In the context of Theorem \ref{thm:eq-branch} (in particular, assuming that $1/a_\ep \ll x_\star(\lbd_\ep)$), define the following \emph{limit ranges} for $(x_\ep)$:
\enumrom
\item $1/a_\ep \ll \ep \lbd_\ep^{\gma_\star}$ is the \emph{pure diffusive range},
\item $1/a_\ep \gg \ep \lbd_\ep^{\gma_\star}$ is the \emph{deterministic range}, and
\item $1/a_\ep \asymp \ep \lbd_\ep^{\gma_\star}$ is the \emph{drift-diffusion (dd) scale}.
\enumend
\end{definition}

In cases where both $0$ and $x_\star(\lbd_\ep)$ are equilibria (zeros of $F$), or when there are two branches $\pm x_\star(\lbd_\ep)$ as in the case of a saddle-node, we could say that a bifurcation has occurred, in the sense of the equilibria becoming distinguishable, once the distance between equilibria exceeds the drift-diffusion scale around $x_\star(\lbd)$. Since that distance is of order $x_\star(\lbd_\ep)$, and by assumption, $x_\star(\lbd) \asymp \lbd^{m_\star}$ as $\lbd\to 0$, this occurs when $\ep \lbd^{\gma_\star} \asymp \lbd^{m_\star}$. Setting the two equal to each other,
$$\ep\,\lbd^{(m_\star+s(B,m_\star)-s(A,m_\star))/2} = \lbd^{m_\star},$$
and squaring and re-arranging gives $\ep^2 = \lbd^{m_\star+s(A,m_\star)-s(B,m_\star)}$ or
\begin{align}\label{eq:lbdtranspt}
\lbd = \ep^{\nu_\star} \quad \text{where} \quad \nu_\star = \dfrac{2}{m_\star+s(A,m_\star)-s(B,m_\star)},
\end{align} 
which is also equivalent to $\ep^2 = r(\lbd^{m_\star},\lbd)$, the crossing point of $\Phi(\ep)$ (from Theorem \ref{thm:dd-curve}) with the curve $x=\lbd^{m_\star}$. The condition $1/a_\ep \ll x_\star(\lbd_\ep)$ for validity of the limit scale translates to $\ep\lbd^{\gma_\star} \ll \lbd^{m_\star}$, which corresponds to $\lbd \gg \ep^{\nu_\star}$.

\section{Bifurcations in one dimension}\label{sec:bifurc}

We now apply the theory developed in Section \ref{sec:prmtzd} to study some one-dimensional bifurcations: saddle-node, transcritical and pitchfork. Our goal is to understand how the dd scale changes, in both space and time, as the value of $\lambda$ sweeps across the bifurcation. We'll begin by giving formulae for the objects of study, then conduct the bifurcation analysis.

\subsection{Generalities}\label{sec:gener}

Let's first write down the dd space and time scales, as a function of $\lambda$, around both $x=0$ and $x=x_\star$, assuming in the former case that the dd curve is upright, which will mostly be the case in what follows, and in the latter case that we are in the context of Section \ref{sec:float-equil}.\\

\nid\tbf{Scales around $x=0$. }As in the proof of Theorem \ref{thm:dd-curve}, the dd curve intersects the lines $L_i=\{(x,\lbd)\colon x=\lbd^{m_i}\}$ at the points $p_i(\ep)=(x_i(\ep),\lbd_i(\ep))$ such that $r(p_i(\ep))=\ep^2$, which using \eqref{eq:rLi} , for $i=1,\dots,n$ have
$$\lbd_i(\ep) = \ep^{\nu_i} \ \ \text{where} \ \ \nu_i = \frac{2}{m_i+s(A,m_i)-s(B,m_i)}.$$
If the dd scale is upright then $\lbd_i(\ep)<\lbd_{i+1}(\ep)$ for each $i$. Referring to \eqref{eq:dd-xvalue} and letting $\lbd_0(\ep)=0$ and $\lbd_{n+1}(\ep)=1$, the dd curve around $x=0$ has the form
$$\phi_\ep(\lbd) = \ep^{2/(1+\dlt_1(i))}\lbd^{-\dlt_2(i)/(1+\dlt_1(i))}\quad \text{for} \quad \lbd_i(\ep)\le \lbd \le \lbd_{i+1}(\ep).$$
Since, for a given sequence $(\lbd_\ep)$ and subpartition element $R_i$, the dd scale has $1/a_\ep \asymp \phi_\ep(\lbd_\ep)$, we can sensibly define the dd time scale around $x=0$, modulo $\asymp$, piecewise by the function
$$b_\ep(\lbd) = \phi_\ep(\lbd)^{1-\lph_1(i)}\lbd^{-\lph_2(i)} \quad \text{for} \quad \lbd_i(\ep) \le \lbd \le \lbd_{i+1}(\ep),$$
which is obtained from the form of $b_\ep$ given in Theorem \ref{thm:prmtrzd-limits} by substituting $\phi_\ep$ for $1/a_\ep$.\\

From Theorem \ref{thm:dd-curve} we already know that $\phi_\ep(\lbd)\ll 1$ uniformly over $\lbd$ as $\ep \to 0$, and that $\lbd_i(\ep) \ll 1$ as $\ep \to 0$ for $i\in \{1,\dots,n\}$, since $\phi_\ep(\lbd_i(\ep)) = \lbd_i(\ep)^{m_i}$ and $m_i\in (0,\infty)$. Using the formulae above we can read off a good deal more information. Uprightness is still assumed, so $1+\dlt_1(i) >0$.
\enumrom
\item For each $i$ and $\lbd \in [\lbd_i(\ep),\lbd_{i+1}(\ep)]$,
\enumalph
\item $\phi_\ep$ and $b_\ep$ each have the form $\ep^{q_1}\lbd^{q_2}$ for some $q_1,q_2 \in \Q$, so each is monotone in both $\ep$ and $\lbd$.
\item $\lbd \mapsto \phi_\ep(\lbd)$ is increasing if $\dlt_2(i)<0$, constant if $\dlt_2(i)=0$ and decreasing if $\dlt_2(i)>0$.
\item since $\phi_\ep \ll 1$, if $\lph_1(i)>1$ then $b_\ep(\lbd) \gg 1$ as $\ep \to 0$, uniformly over $\lambda$.
\item if $\lph_1(i)=1$ and $i<n$ then since $\lbd_n(\ep)\ll 1$, $b_\ep(\lbd) \gg 1$ as $\ep \to 0$, uniformly over $\lbd$.
\enumend
\item If $\lph_1(n)=1$ then either 
\enumalph
\item $b_\ep(\lbd)=1$ on $[\lbd_n(\ep),1]$ (if $\lph_2(n)=0$), or
\item $b_\ep(\lbd) \downarrow 1$ as $\lbd \uparrow 1$ (if $\lph_2(n)>0$).
\enumend
\enumend

\nid\tbf{Scales around $x=x_\star$. }Referring to the dd scale in Theorem \ref{thm:eq-branch}, we have $x_\star(\lbd) \asymp \lbd^{m_\star}$, $\gma_\star=m_\star+s(B,m_\star)-s(A,m_\star)$ and $\nu_\star=2/(m_\star+s(A,m_\star)-s(B,m_\star))$, and we can define a dd curve around $x_\star$ with half-width
$$\phi_\ep^\star(\lbd) := \ep \lbd^{\gma_\star} \quad \text{for} \quad \lbd\ge \lbd_\star(\ep):=\ep^{\nu_\star},$$
Similarly as above, referring to Theorem \ref{thm:eq-branch} the dd time scale around $x_\star$ can be defined by
$$b_\ep^\star(\lbd) = \lbd^{m_\star-s(A,m_\star)}\quad \text{for} \quad \lbd\ge \lbd_\star(\ep).$$
From the definition of $\lbd_\star(\ep)$ (see \eqref{eq:lbdtranspt}), it follows that $\phi_\ep^\star(\lbd_\star(\ep)) \asymp \phi_\ep(\lbd_\star(\ep))$ as $\ep \to 0$. Similarly it can be verified from the formulae that $b_\ep^\star(\lbd_\star(\ep)) \asymp b_\ep(\lbd_\star(\ep))$ as $\ep \to 0$. As before we will make some general observations, this time for $\lbd \in [\lbd_\star(\ep),1]$.
\enumalph
\item $\phi_\ep^\star$ and $b_\ep^\star$ each have the form $\ep^{q_1}\lbd^{q_2}$ for some $q_1,q_2 \in \Q$, so each is monotone in $\ep$ and $\lbd$.
\item $\phi_\ep^\star(\lbd) \ll 1$ as $\ep \to 0$, uniformly in $\lbd$. This follows from monotonicity of $\lbd\mapsto \phi_\ep^\star(\lbd)$ together with $\phi_\ep^\star(\lbd_\star(\ep)) \asymp \phi_\ep(\lbd_\star(\ep))\ll 1$ and $\phi_\ep^\star(1) = \ep \ll 1$.
\item $\phi_\ep^\star(\lbd) \to \ep$ and $b_\ep^\star(\lbd) \to 1$ as $\lbd \to 1$, for each $\ep>0$.
\item Since $b_\ep^\star(1)=1$, if $b_\ep(\lbd_\star(\ep))\gg 1$ as $\ep \to 0$ then for small $\ep>0$, $b_\ep^\star(\lbd)\downarrow 1$ as $\lbd \uparrow 1$.
\enumend

\subsection{Bifurcations}

We now study bifurcations. Since, in principle, we can use the above formulae to describe exactly the dd space and time scales, we shall be more concerned with the following qualitative properties:
\enumar
\item The general shape of $\phi_\ep$ and $\phi_\ep^\star$, i.e., the intervals $[\lbd_i(\ep),\lbd_{i+1}(\ep)]$ on which \\each one increases, decreases or remains constant, and
\item The values of $\lbd$ for which $b_\ep(\lbd)$ and $b_\ep^\star(\lbd)$ are $\gg 1$, $\asymp 1$ or $\ll 1$ as $\ep\to 0$, respectively, regions where the diffusion limit is slow, fast, or irrelevant (i.e., too short to observe on the original time scale).
\enumend
The context is a strongly stochastic ($F=O(G)$) parametrized QDP, as in Definitions \ref{def:ssQDP} and \ref{def:prmtrzdQDP}, whose characteristics $F,G$ have a Taylor expansion to leading order around $(0,0)$ as in \eqref{eq:loTaylor}, and for which $F(0,0)=\ptl_x F(0,0)=0$. Letting $A,B$ denote the powers of $F,G$ from \ref{eq:loTaylor}, by Lemma \ref{lem:envelope-sum}, we can, and will, assume in this section that $A,B$ are envelopes.\\

\nid\tbf{Bifurcation types. }The set $A$ determines the bifurcation type, and from the assumption $F=O(G)$ and Lemma \ref{lem:FOGenv}, $B$ is constrained by the condition $\piv(A)\subset S(B)$. The bifurcations that we'll consider correspond to the following choices for $A$ and equilibria:
\enumar
\item Saddle-node: $A=\{(2,0),(0,1)\}$ with equilibria at $\pm \, x_\star(\lbd)$ where $x_\star(\lbd) \asymp \sqrt{\lbd}$.
\item Transcritical: $A=\{(2,0),(1,1)\}$ with equilibria at $0$ and $x_\star(\lbd) \asymp \lbd$.
\item Pitchfork: $A=\{(3,0),(1,1)\}$ with equilibria at $0$ and $\pm \, x_\star(\lbd)$ where $x_\star(\lbd) \asymp \sqrt{\lbd}$.
\enumend

In all three cases, $A$ has the form $\{(j_1,0),(j_2,1)\}$ for some $j_2\le 1<j_1$, which has $\piv(A)=A$ and $M(A)=\{m_A\}$ with $m_A:=1/(j_1-j_2)\}$, and $F$ has a non-constant equilibrium branch $x_\star(\lbd) \asymp \lbd^{m_\star}$ with $m_\star=m_A$. For simplicity we'll assume $B$ has no elements $(\beta_1,\beta_2)$ with $\beta_2 \ge 2$, i.e., $G$ has no relevant $\lbd$ dependence above $\lbd^1$, which is reasonable for most applications. Using this and the constraint $A \subset S(B)$, in each case either
\enumrom
\item $B=\{(k,0)\}$ for some $k\le j_2$ which has $M(B)=\emptyset$, or
\item $B=\{(k_1,0),(k_2,1)\}$ for some $k_2<k_1\le j_1$ with $k_2 \le j_2$ which has $M(B)=\{1/(k_1-k_2)\}$. 
\enumend

\nid\tbf{Uprightness. }For the above bifurcations and any compatible choice of $B$, the dd curve around $x=0$ is upright, with the exception of $A=\{(3,0),(0,1)\}$ and $B=\{(2,0),(0,1)\}$ where it is vertical for $\lbd \le x \le \sqrt{\lbd}$. This is clear if $B=\{(k,0)\}$ since $\lph_1(i)\in \{j_1,j_2\}$ for each $i$ while $\beta_1(i)=k \le \min(j_1,j_2)$. Otherwise, since $k_1\le j_1$ and $k_2\le j_2$, $\alpha_1(i) < \beta_1(i)$ only occurs if for some $i$, $\lph(i) = (j_2,1)$ and $\beta(i)=(k_1,0)$ with $k_1>j_2$. Since $A(m)=\{(j_2,1)\}$ iff $m>m_A$ and $B(m)=\{(k_1,0)\}$ iff $m<m_B:=1/(k_1-k_2)$, this is possible only if $m_A<m_B$, which as confirmed by a quick sketch is only possible in the pitchfork case, and only when $B=\{(2,0),(0,1)\}$.\\

\nid\tbf{Regions. }As usual, we restrict to $(x,\lbd)\in [0,1]^2$. The behaviour in the other three quadrants can be inferred by symmetry: limit scales around $x=0$ are unchanged modulo reflection, and the same goes for scales around $x=x_\star$, when there is an equilibrium branch in the given quadrant, such as quadrant IV for saddle-node and pitchfork, and quadrant III for transcritical. Note that limit scales do not depend on whether the equilibrium is stable or unstable.\\

Using what we learned in Section \ref{sec:gener}, we give a qualitative picture of the dd space and time scales around $x=0$ and $x=x_\star$. We have $M(A)=\{m_A\}$ with $m_A=1/(j_1-j_2)$ and if $M(B) \ne \emptyset$ then $M(B)=\{m_B\}$ with $m_B=1/(k_1-k_2)$. In addition, $m_\star=m_A$. There are four cases to consider:
\enumrom
\item if $B=\{(k,0)\}$ then $M(B)=\emptyset$,
\item if $j_1-j_2=k_1-k_2$ then $m_B=m_A$,
\item if $j_1-j_2>k_1-k_2$ then $m_B<m_A$ and
\item if $j_1-j_2<k_1-k_2$ then $m_B>m_A$.
\enumend
This gives
$$M:=M(A)\cup M(B) = \begin{cases} \{m_A\} & \text{in cases (i)-(ii)}, \\
\{m_A,m_B\} & \text{in cases (iii)-(iv)}.\end{cases}$$
When $|M|=1$, with $m_1=m_A$ the intervals to consider are $[0,\lbd_\star(\ep)]$ and $[\lbd_\star(\ep),1]$. When $|M|=2$, with $m_1=\min(m_A,m_B)$ and $m_2=\max(m_A,m_B)$, the intervals are $[0,\lbd_\1(\ep)]$, $[\lbd_\1(\ep),\lbd_2(\ep)]$ and $[\lbd_2(\ep),1]$. A table of $\lph,\beta$ and $\delta$ values is given in Table \ref{tab:bifurcases}.\\

%a sketch of the various cases confirms the last case occurs only when $A=\{(j_1,0),(1,1)\}$ and $B=\{(j_1,0),(0,1)\}$ for $j_1\in \{2,3\}$.

\begin{table}\label{tab:bifurcases}
\begin{tabular}{ c | c | c | c }
Case & $(\lph(i))_{i=0}^n$ & $(\beta(i))_{i=0}^n$ & $(\dlt(i))_{i=0}^n$ \\ \hline
$M(B)=\emptyset$ & $((j_1,0),(j_2,1))$ & $((k,0),(k,0))$ & $((j_1-k,0),(j_2-k,1))$ \\
$m_B=m_A$ & $((j_1,0),(j_2,1))$ & $((k_1,0),(k_2,1))$ & $((j_1-k_1,0),(j_2-k_2,0))$ \\
$m_B<m_A$ & $((j_1,0),(j_1,0),(j_2,1))$ & $((k_1,0),(k_2,1),(k_2,1))$ & $((j_1-k_1,0),(j_1-k_2,-1),(j_2-k_2,0)$ \\
$m_B>m_A$ & $((j_1,0),(j_2,1),(j_2,1))$ & $((k_1,0),(k_1,0),(k_2,1))$ & $((j_1-k_1,0),(j_2-k_1,1),(j_2-k_2,0))$
\end{tabular}
\end{table}

\nid\tbf{Shape of $\phi_\ep$ and $\phi_\ep^\star$. }As noted in Section \ref{sec:gener}, $\phi_\ep$ increases ($\nearrow$), decreases ($\searrow$) or is constant ($\rightarrow$) if $\dlt_2(i)$ is negative, positive or zero respectively. Reading $\dlt_2(i)$ from Table \ref{tab:bifurcases}, from left to right over the two or three regions,
\enumrom
\item if $M(B)=\emptyset$ then $\phi_\ep$ is $\rightarrow$, then $\searrow$,
\item if $m_B=m_A$ then $\phi_\ep$ is constant on $[0,1]$,
\item if $m_B<m_A$ then $\phi_\ep$ is $\rightarrow$, then $\nearrow$, then $\rightarrow$ again, and
\item if $m_B>m_A$ then $\phi_\ep$ is $\rightarrow$, then $\searrow$ , then $\rightarrow$ again.
\enumend
In the exceptional case (see uprightness above) which belongs to case (iv), $\phi_\ep$ is vertical instead of decreasing in the second region. In each case, $\phi_\ep(\lbd) \to \ep^{2/(1+\dlt_1(n))} = \ep^{2/(1+j_2-k_2)}$ ($k$ in place of $k_2$ in case (i)) as $\lbd \to 1$. Since $k_2 \le j_2\le 1$, $j_2-k_2 \in \{0,1\}$. For the saddle-node, $j_2=0$ requiring $k_2=0$ and giving limit $\ep^2$, while for the other two bifurcations it depends on the value of $k_2$.\\

Similarly, $\phi_\ep^\star$ is $\nearrow$, $\searrow$ or $\rightarrow$ on $[\lbd_\star(\ep),1]$ if $\gma_\star$ is $>0$, $<0$ or $=0$. We leave it to the interested reader to determine its shape in each case. As noted in Section \ref{sec:gener}, $\phi_\ep^\star(\lbd_\star(\ep)) \asymp \phi_\ep(\lbd_\star(\ep))$ and $\phi_\ep^\star(\lbd) \to \ep$ as $\lbd \to 1$, so one way to infer its shape is to compute $\phi_\ep(\lbd_\star(\ep))$ and compare it to $\ep$.\\

\nid\tbf{Scale of $b_\ep$ and $b_\ep^\star$. }Here is a brief summary: for $\lbd$ near $0$, time scales are $\gg 1$, while for $\lbd$ near $1$, the time scale around equilibrium points $\to 1$ following the curve $1/\lbd$, and around non-equilibrium points is eventually $\ll 1$ as $\lbd \to 1$. In other words, diffusion is slow near the bifurcation point, and moving away from the bifurcation point, becomes fast around equilibria and irrelevant around non-equilibrium points. We now give and demonstrate the precise statements.\\

In all cases, $b_\ep(\lbd)\gg 1$ uniformly over $\lbd \in [0,\lbd_n(\ep)]$ as $\ep \to 0$, i.e., the dd time scale around $0$ is slow in the leftmost $n$ (of $n+1$) regions. Since $\lbd_\star(\ep)\le \lbd_n(\ep)$, in particular the time scale is $\gg 1$ at least up to the point where equilibria become distinguishable. Using observations (i)(c)-(d) of the first half of Section \ref{sec:gener}, the above is true provided $\lph_1(i) \ge 1$ for all $i<n$. This is obvious for transcritical and pitchfork as $\lph_1\ge 1$ for all $\lph\in A$, while for saddle-node, $n=1$ and $\lph_1(0)=2$. If, instead, $\lbd \in [\lbd_n(\ep),1]$ then from the formula for $b_\ep$ and the fact that in all cases, $\lph_1(n)=j_2$ and $\lph_2(n)=1$,
$$b_\ep(\lbd) = \begin{cases} 1/\lbd & \text{if} \ j_2=1, \\
\phi_\ep(\lbd)/\lbd & \text{if} \ j_2=0.\end{cases}$$
In particular, as $\lbd \to 1$, since $\phi_\ep(\lbd) \to \ep^{2/(1+k_2-j_2)}$,
$$b_\ep \to \begin{cases} 1 & \text{if} \ \ j_2=1,\\
\ep^{2/(1+k_2-j_2)} & \text{if} \ \ j_2=0\end{cases}$$
(with $k$ in place of $k_2$ if $B=\{(k,0)\}$). If $j_2=0$ then $b_\ep(\lbd)\le 1$ iff $\lbd \ge \ep^{2/(1+k_2-j_2)}$, so if $\lbd \gg \ep^{2/(1+k_2-j_2)}$ then since $b_\ep(\lbd)\ll 1$ it is sensible to declare the dd scale irrelevant at that point, as drift has completely washed out diffusion. For the time scale around $x_\star$, since we've shown that $b_\ep(\lbd_\star(\ep))\gg 1$ as $\ep \to 0$, using observation (d) from the second half of Section 5.1 it follows that $b_\ep^\star(\lbd) \downarrow 1$ as $\lbd \uparrow 1$, for each $\ep>0$. In particular, if $\lbd_\star(\ep) \le \lbd \ll 1$ then $b_\ep^\star(\lbd) \gg 1$.

\subsection{Canonical form of $G$}

With the multiplicity of functions $G$ satisfying $F=O(G)$ for a given $F$, the question arises whether there is a canonical or generic choice of $G$, one to be expected most often in applications. I wish to argue that for the above examples, where $A=\{(j_1,0),(j_2,1)\}$, an obvious choice is to take
\begin{align}\label{eq:canonB}
B=\{(k,0)\} \quad \text{where} \quad k=\min(j_1,j_2)=j_2.
\end{align}
Equivalently, $B=\{\beta\}$ where $\beta = (j_1,0)\wedge (j_2,1)$. To understand this claim, note that a bifurcation can arise from two competing mechanisms, when their contributions to the drift are equal and opposite: for example, varying the reproduction rate in a population growth model, a transcritical bifurcation occurs when birth and death rates are equal. When each mechanism occurs randomly at specified rates, the diffusion contributed from each one is additive, even if the drift is cancellative. This explains the assumption $F=O(G)$. The particular form \eqref{eq:canonB} is obtained if, apart from the terms that cause the bifurcation, no other cancellation occurs.\\

Happily, \eqref{eq:canonB} leads to the simplest diagrams, since $M(B)=\emptyset$. There are only two regions: $[0,\lbd_\star(\ep)]$ and $[\lbd_\star(\ep),1]$. The first region can be viewed as the ``critical window'', where the dd scale is $\ge$ the separation distance between equilibria, and (borrowing terminology from \cite{luczak}) the second region is the ``barely subcritical/supercritical'' region, where the equilibria have separated but $\lbd$ is still generally $\ll 1$ as $\ep\to 0$.\\

Let's begin by computing $\lbd_\star(\ep)$. Since $m_\star=m_A=m_1=1/(j_1-j_2)$ and $\lph(0)=(j_1,0),\beta(0)=(k,0)=(j_2,0)$, $s(A,m_\star)-s(B,m_\star)=m_\star (j_1-j_2)=1$ and so
$$\nu_\star = 2/(m_\star+s(A,m_\star)-s(B,m_\star)) = 2(j_1-j_2)/(1 + j_1-j_2)$$
and, noting $j_1>1\ge j_2$,
$$\lbd_\star(\ep)=\ep^{2(j_1-j_2)/(1+j_1-j_2)}.$$

\nid\tbf{Critical window. }For $\lbd \in [0,\lbd_\star(\ep)]$, both $\phi_\ep$ and $b_\ep$ are constant, and $b_\ep \gg 1$ as $\ep \to 0$. Most of this has already been noted, except that $b_\ep$ is constant, which using the formula for $b_\ep$ follows from $\phi_\ep$ being constant, and the fact that $\lph_2(0)=0$. The precise formulae are as follows: since $k=j_2$, referring to Table \ref{tab:bifurcases}, since $k=j_2$ we have $\lph(0) = (j_1,0)$ and $\dlt(0)=(j_1-j_2,0)$ and, noting $j_1>1\ge j_2$,
$$\phi_\ep = \ep^{2/(1+j_1-j_2)} \quad\text{and} \quad b_\ep = \ep^{-2(j_1-1)/(1+j_1-j_2)}.$$

\nid\tbf{Barely non-critical region. }For $\lbd \in [\lbd_\star(\ep),1]$ we first treat scales around $x=0$, then around $x=x_\star$. We have $\lph(1)=(j_2,1)$ and since $k=j_2$, $\dlt(1)=(0,1)$ and so
$$\phi_\ep(\lbd) = \ep^2/\lbd, \quad b_\ep(\lbd) = \begin{cases} 1/\lbd & \text{if} \ j_2=1, \\  \ep^2/\lbd^2 & \text{if} \ j_2=0.\end{cases}$$
In particular, $\phi_\ep(\lbd)\to \ep^2$ as $\lbd \to 1$, and for the saddle-node, diffusion is irrelevant once $\lbd \gg \ep$, while in the other cases, diffusion around $x=0$ persists as $\lbd \to 1$, approaching fast diffusion. For scales around $x=x_\star$, recall $m_\star=1/(j_1-j_2)$ and $s(A,m_\star)-s(B,m_\star)=1$, so $\gma_\star = m_\star+s(B,m_\star)-s(A,m_\star) = 1/(j_1-j_2)-1 \le 0$, and since $j_1>1\ge j_2$,
$$\phi_\ep^\star(\lbd) = \begin{cases}
\ep & \text{if} \ j_1=2, \ j_2=1,\\
\ep\,\lbd^{1/(j_1-j_2)-1} & \text{otherwise},\end{cases}$$
with $\phi_\ep^\star(\lbd) \downarrow \ep$ as $\lbd \uparrow 1$ in the second case. Then, since $m_\star-s(A,m_\star)=m_\star(1-j_1)-j_2 = -((j_1-1)/(j_1-j_2)+j_2)$,
$$b_\ep^\star(\lbd) = \lbd^{-((j_1-1)/(j_1-j_2)+j_2)}.$$
Since $(j_1-1)/(j_1-j_2)+j_2>0$, this supports what we already showed: that $b_\ep^\star(\lbd) \gg 1$ for $\lbd_\star(\ep) \le \lbd \ll 1$ and $b_\ep^\star(\lbd)\downarrow 1$ as $\lbd \uparrow 1$.\\

\section*{Acknowledgements}

The author is grateful for support from an NSERC Discovery Grant.

\section*{Appendix}

\subsection*{Proof of Lemma \ref{lem:SDExist}}

\begin{proof}
The existence and uniqueness of $(P_x)_{x \in \hat U}$, and the Feller and strong Markov properties, are given in Theorem 13.1 in Chapter 1 of \cite{pinsky}. It remains to show that if $\tau(U)<\infty$ and $X(0)\in U$ then as $t\to \tau(U)^-$ either $|X(t)|\to \infty$ or $d(X_t,z)\to 0$ for some $z \in \partial U$, where $d$ is Euclidean distance. By definition of $ \smash{ \hat \Omega_U}$, $X(t)=\c$ iff $t\ge \tau(U)$ and $X(t) \to_{\rho_U} \c$ as $t \to \tau(U)^-$. So, by definition of $\rho(U)$, one of $|X(t)| \to \infty$ or $d(X(t),\partial U) \to 0$ holds as $t\to \tau(U)^-$, where $d$ is Euclidean distance. If $\liminf_{t \to \tau(U)^-}|X(t)|<\infty$ then $A\cap \partial U \ne \emptyset$, where $A$ is the limit set of $\{X(t)\colon t<\tau(U)\}$ with respect to $d$, so it is enough to show that if $\tau(U)<\infty$ and $\liminf_{t\to\tau(U)^-}|X(t)|<\infty$ then $X(t)$ converges as $t\to \tau(U)^-$.\\

For $x,z \in U$ define $f_z(x)=|x-z|^2 = \sum_i (x_i-z_i)^2$ and with $L$ as in \eqref{eq:mg-op}, for $t<\tau(U)$ define
$$M(t;z)=f_z(X(t)) - \int_0^t (Lf_z)(X(s))ds,$$
so that $t\mapsto M(t\wedge \tau(D_n);z)$ is a martingale, by \eqref{eq:mp-mg}. We compute
$$(Lf_z)(x) = \sum_i (G_{ii}(x) + 2F_i(x)(x_i-z_i)).$$
For $r>0$ let $B_r=\{x\in\R^d \colon |x|\le r\}$. Since, by assumption, $F,G$ are bounded on bounded subsets of $U$,
\begin{align}\label{eq:Lbnd}
A:=\sup\{|(Lf_z)(x)| \colon x,z \in B_r \cap U\}<\infty.
\end{align}
If $\tau$ is a stopping time with $\tau\le \tau(D_n)$, then since stopping preserves the martingale property, $t\mapsto M(t\wedge \tau;z)$ is a martingale, and if moreover $\sup_{t <\tau} |X(t)| \le r$, then using \eqref{eq:Lbnd}, for all $t\ge 0$
$$M(t\wedge \tau;z) \ge f_z(X(t\wedge \tau)) -  (t \wedge \tau) A.$$
Using the strong Markov property and the fact that $M(0;X(0))=0$, it follows that if $\tau,\tau'$ are stopping times with $\tau \le \tau' \le \tau(D_n)$ and $M(t)$ is defined by
\begin{align}\label{eq:stop-mg}
M(t) = \1(\tau<t)\left(f_{X(\tau)}(X(t \wedge \tau')) - \int_{\tau}^{t\wedge \tau'} (Lf_{X(\tau)})(X(s))ds \right),
\end{align}
then $M$ is a martingale, and if moreover $\sup_{t \in [\tau,\tau')}|X(t)| \le r$, then
\enumrom
\item $M(t) \ge -(t\wedge \tau'- t\wedge \tau)A$ for all $t \ge 0$, and
\item $M(\tau') \ge |X(\tau')-X(\tau)|^2 - (\tau'-\tau )A$ for all $t\ge \tau'$.
\enumend
Fix $\epsilon \in (0,1)$ and $T>0$ and define the times $\tau_0=0$ and
\begin{align}\label{eq:ret-time}
&\tau_{2i+1} = \tau(D_n) \wedge T\wedge \inf\{t\ge \tau_{2i}\colon |X(t)| \le r-1\},\nonumber \\
&\tau_{2i+2} = \tau(D_n) \wedge T\wedge \inf\{t \ge \tau_{2i+1}\colon |X(t)-X(\tau_{2i+1})|^2 > \epsilon\}
\end{align}
Then for each $i$, the stopping times $\tau_{2i+1},\tau_{2i+2}$ fulfill the conditions decribed for $\tau,\tau'$ above. Let $M_i(t)$ denote the process from \eqref{eq:stop-mg} with $\tau_{2i+1},\tau_{2i+2}$ in place of $\tau,\tau'$, and let $S_i(t) = \sum_{j=1}^i M_i(t)$. Summing over $j\le i$ in (i) above, $S_i(t) \ge - (t\wedge \tau_{2i+2})A \ge - TA$ for $t\ge 0$. Using (ii) above, on the event $\{\tau_{2i+2}<\tau(D_n)\wedge T\}$, $M_i(t) \ge \epsilon - (\tau_{2i+2}-\tau_{2i+1})A$ for all $t \ge \tau_{2i+2}$, so on $\{\tau_{2i+2}<\tau(D_n)\wedge T\}$, $S_i(t) \ge i\,\epsilon -(\tau_{2i+2})A \ge i\,\epsilon - TA$ for all $t\ge \tau_{2i+2}$. We record these two facts:
\enumrom
\item $S_i(t) \ge -  T A$ for all $t\ge 0$, and
\item $S_i(t) \ge i\,\epsilon -  T A$ for all $t \ge \tau_{2i+2}$, on $\{\tau_{2i+2}<\tau(D_n)\wedge T\}$.
\enumend
Since each $M_i$ is a martingale, each $S_i$ is a martingale. In addition, $S_i(0)=0$, so together with (i), $S_i + TA$ is a non-negative martingale, with $S_i(0)+TA = TA$. Using Doob's inequality, for $C>TA$,
$$P(\sup_{t\ge 0}S_i(t)\ge C-TA) \le TA/C,$$
which also holds for $C\le TA$ since then $TA/C\ge 1$. Write the times $\tau_i$ defined by \eqref{eq:ret-time} as $\tau_i^n$ to emphasize the dependence on $n$. Combining with (ii),
\begin{align}\label{eq:tau-bound}
P(\tau_{2i+2}^n < \tau(D_n)\wedge T) \le TA/(i\,\epsilon).
\end{align}
Defining $(\tau_i)_{i \ge 1}$ as in \eqref{eq:ret-time} but with $\tau(U)$ in place of $\tau(D_n)$, clearly $\tau_i^n = \tau_i \wedge \tau(D_n)$ for all $i,n$. If $\tau_{2i+2}^n = \tau(D_n)\wedge T$ for all $n$, then since $\tau(D_n) \to \tau(U)$, $\tau_{2i+2}=\tau(U)\wedge T$, so if $\tau_{2i+2}<\tau(U)\wedge T$ then $\tau_{2i+2}^n < \tau(D_n)\wedge T$ for some $n$. Using \eqref{eq:tau-bound} and continuity of probability, it follows that
\begin{align}\label{eq:tau-bound2}
P(\tau_{2i+2} < \tau(U) \wedge T) \le TA/(i\,\epsilon).
\end{align}\\
Define the oscillation of $X$ as $t\to\tau(U)^-$ by
$$\osc = \lim_{t\to\tau(U)^-} \sup_{u,v \in [t,\tau(U))}|X(u)-X(v)|.$$
Then by completeness of $\R^d$, $X(t)$ converges as $t\to\tau(U)^-$ iff $\osc=0$. Suppose $\liminf_{t\to \tau(U)^-}|X(t)| \le r-1$, $\osc > \epsilon$ and $\tau(U)\le T$. Then, $\tau_{2i+2}<\tau(U) \wedge T$ for every $i\ge 1$. Since $\{\tau_{2i+2}<\tau(U)\wedge T\}$ is a decreasing sequence in $i$, by \eqref{eq:tau-bound2} and continuity of probability, $P(\tau_{2i+2} < \tau(U)\wedge T \ \text{for all} \ i\ge 1) =0$. Thus,
$$P(\liminf_{t\to \tau(U)^-}|X(t)| \le r-1, \ \osc > \ep \ \text{and} \ \tau(U)\le T)=0.$$
Since $\ep,T,r>0$ are arbitrary, taking a sequence $\ep_m,r_m,T_m$ with $\ep_m \to 0$ and $r_m,T_m\to\infty$ as $m\to\infty$,
$$P(\liminf_{t\to\tau(U)^-}|X(t)| <\infty , \ \osc > 0 \ \text{and} \ \tau(U) <\infty)=0.$$
In other words, if $\liminf_{t\to\tau(U)^-}|X(t)| <\infty$ and $\tau(U)<\infty$, then $\osc=0$ which implies $X(t)$ converges as $t\to\tau(U)^-$, which is what we needed to show.
\end{proof}

\subsection*{Proof of Lemma \ref{lem:diff-limit}}

As in the statement of Lemma \ref{lem:diff-limit}, given $U,F,G,x$ let $X$ denote the solution supplied by Lemma \ref{lem:SDExist} with $X(0)=x$. The goal is to show that if $(X_\ep)$ is a QD with characteristics $F,G$ defined on $U$ and $X_\ep(0)\to x$ as $\ep \to 0$ then $X_\ep \smash \lcd X$ on $U$ as $\ep\to 0$.

\begin{proof}
We will need Theorem 4.1 from Chapter 7 of \cite{ethktz}. First, we define the stopped martingale problem and give an existence and uniqueness result.\\

\nid\textit{Definition.} In the context of Definition \ref{def:mp}, say that $(P_x)$ solves the stopped martingale problem for $F,G$ on $D$ if $P_x(X(t)=X(t\wedge \tau(D))=1$ for all $x$ and if \eqref{eq:mp-mg} is a martingale for $f \in C^2(U)$, with $\tau(D)$ in place of $\tau(D_n)$.\\

\nid\textit{Existence and uniqueness.} Say that a problem is well-posed if it has a unique solution. Suppose $F,G$ satisfy the conditions of Lemma \ref{lem:SDExist}. Then, as in Theorem 13.1 of Chapter 1 in \cite{pinsky}, for domains $D\subset\subset U$ we can define $F_D,G_D$ on $\R^d$ that coincide with $F,G$ on $D$ and are such that (i) the martingale problem for $F_D,G_D$ on $\R^d$ is well-posed, and (ii) the solution for $F_D,G_D$ coincides with the solution for $F,G$ up to time $\tau(D)$, i.e., the distributions of the stopped processes coincide. It follows from (i) and Theorem 6.1 in Chapter 4 of \cite{ethktz} that the stopped martingale problem for $F,G$ on $D$ is well-posed, and from (ii) that its distribution is given by the solution for $F,G$, stopped at time $\tau(D)$.\\

\nid\textit{Convergence.} We adapt Theorem 4.1 of Chapter 7 in \cite{ethktz} to the present context. Suppose the generalized martingale problem for $F,G$ is well-posed, and fixing $x \in U$ let $X$ denote the process with distribution $P_x$. As explained above, for $D\subset\subset U$ the stopped martingale problem for $F,G$ on $D$ is well-posed, and its unique solution with initial value $x$ is given by $X(\cdot \wedge \tau(D))$. Suppose we have \cdlg~$\R^d$-valued processes $X_n$ and $B_n$ and an $M_d(\R)$-valued process $A_n$ such that $A_n(t)-A_n(s)$ is positive semidefinite for $t>s\ge 0$. Let $\F_t^n = \sigma(X_n(s),B_n(s),A_n(s) \colon s\le t)$. Let $\tau(F,n) = \inf\{t \colon X_n(t^-) \notin D \ \text{or} \ X_n(t) \notin D\}$. Suppose that $M_n := X_n-B_n$ and $M_nM_n^{\top} - A_n$ are $\F^n$-local martingales, and that for $D\subset\subset U$ and $T>0$, 
\enumrom
\item $\lim_{n\to\infty} E\left(\sup_{t \le \tau(D,n) \wedge T} |\Delta X_n(t)|^2\right)=0$,
\item $\lim_{n\to\infty} E\left(\sup_{t \le \tau(D,n) \wedge T} |\Delta B_n(t)|^2\right)=0$,
\item $\lim_{n\to\infty} E\left(\sup_{t \le \tau(D,n) \wedge T} |\Delta A_n(t)|\right)=0$,
\item $\sup_{t \le \tau(D,n) \wedge T}|B_n(t) - \int_0^t F(X_n(s))ds| \cp 0$ and
\item $\sup_{t \le \tau(D,n) \wedge T}|A_n(t) - \int_0^t (GG^{\top})(X_n(s))ds| \cp 0$.
\enumend
If these conditions are satisfied and $X_n(0) \to x$ as $n\to\infty$, then defining the increasing family $$D_r = \{x \in U\colon |x| < r \ \text{and} \ d(x,U^c) > 1/r \}$$
where $|\cdot|$ is any norm and for a point $y$ and set $A$, $d(y,A) = \inf\{d(y,z)\colon z\in A\}$ where $d$ is Euclidean distance, using the same approach as the proof of Theorem 4.1 of Chapter 7 in \cite{ethktz} it follows that for all but countably many $r$, $X_n(\cdot \wedge \tau(D_r,n))\smash \cd X(\cdot \wedge \tau(D_r))$. So, taking a strictly increasing sequence $r_i\to 0$  such that the above converges with $r=r_i$ for each $i$ and letting $D_i = D_{r_i}$, it follows that $X_n\smash \lcd X$ as $n\to\infty$.\\

\nid\textit{Application to QD.} We now apply this result to a QD $(X_\ep)$. We'll refer to (i)-(v) above as conditions and (i)-(iii) from Definition \ref{def:qd} as assumptions. Since $F,G$ are locally uniformly continuous, assumption (i) implies that $X_\ep$ can be replaced with $J_c X_\ep$ in assumptions (ii)-(iii). By a standard argument, there exist $a_\ep \to 0$ as $\ep \to 0$ such that (i)-(iii) are true with $a_\ep$ in place of $a$. Let $Z_\ep = J_{a_\ep}X_\ep$. Noting that $\tau(D,\ep) < \zeta(X_\ep)=\zeta(Z_\ep)$, from assumptions (i)-(iii) we find that
\begin{enumerate}[noitemsep,label={(\roman*)}]
\item $\sup_{t \le \tau(D,\ep)\wedge T}|X_\ep(t)-Z_\ep(t)| \cp 0$ as $\ep \to 0$,
\item $\sup_{t \le \tau(D,\ep)\wedge T} \left| \, (Z_\ep)^p(t)  - \int_0^t F(Z_\ep(s))ds \, \right| \cp 0 \ \ \text{as} \ \ \ep \to 0$, and
\item $\sup_{t \le \tau(D,\ep)\wedge T} \left| \lng (Z_\ep)^m \rng(t) - \int_0^t G(Z_\ep(s))ds \, \right| \cp 0 \ \ \text{as} \ \ \ep \to 0$,
\end{enumerate}

From now on, property (i)-(iii) refers to the above points. Let $X$ denote the solution to the generalized martingale problem for $F,G$. If we can show that $Z_\ep \smash \lcd X$ as $\ep \to 0$ then property (i) implies $X_\ep \smash \lcd X$. Let $(\ep_n)$ be a sequence with $\ep_n \to 0$ as $n\to\infty$ and let $X_n = Z_{\ep_n}$, $B_n = Z_{\ep_n}^p$ and $A_n = \lng Z_{\ep_n}^m \rng$, noting that $M_n:= X_n-B_n= Z_{\ep_n}^m$ and $M_nM_n^{\top}-A_n$ are local martingales, as required. Properties (ii) and (iii) are equivalent to conditions (iv) and (v) so we need only establish conditions (i)-(iii). By definition, $|\Delta Z_\ep| \le a_\ep$ a.s., so since $a_{\ep_n} \to 0$ as $n\to\infty$, condition (i) holds. From [I.4.24], $|Z_\ep| \le a_\ep$ implies $|\Delta Z_\ep^p| \le a_\ep$ and $|\Delta Z_\ep^m| \le 2a_\ep$. $|\Delta Z_\ep^p| \le a_\ep$ and $a_{\ep_n}\to 0$ implies condition (ii). $|\Delta Z_\ep^m| \le 2a_\ep $ is equivalent to $|\Delta M_n| \le 2 a_{\epsilon_n}$, and condition (iii) is obtained from it, as follows.\\

For a jointly measurable (with respect to the underlying $\sigma$-algebra and time) process $X$, let $^p X$ denote the predictable projection of $X$, which as defined and proved in [I.2.28], is the unique predictable process that satisfies $^pX(\tau) = E(X(\tau) \mid \F(\tau^-))$ on $\{\tau<\infty\}$, for all predictable $\tau$. Clearly, $^p(\,\cdot\,)$ is linear. If $X$ is ~\cdlg~and $\tau$ is predictable then
\begin{align}\label{eq:pp-jump}
|(\,^p(\Delta X))(\tau)| \le E(\, | \, (\Delta X)(\tau)| \  \mid \F(\tau^-)) \le E(\,|(\Delta X)(\tau)|\,).
\end{align}
If $X$ is a (\cdlg) local martingale, then by [I.2.31], $^p(\Delta X)=0$. If $X$ is \cdlg~and predictable, it follows easily from [I.2.4] and [I.2.24] that $^p(\Delta X)=\Delta X$. Since $M_nM_n^{\top}-A_n$ is a local martingale and $A_n$ is \cdlg~and predictable, $^p(\Delta (M_n M_n^{\top} - A_n))=0$ and $\Delta A_n = \,^p(\Delta A_n)$, so $\Delta A_n = \,^p(\Delta (M_nM_n^{\top}))$. Since $M_nM_n^{\top} - [M]$ is a local martingale,$^p(\Delta (M_n M_n^{\top})) = ^p(\Delta [M_n])$, so $\Delta A_n = \,^p (\Delta [M_n])$. From [I.4.47(c)], $\Delta [M_n] = \Delta M_n \Delta (M_n^{\top})$ and using the earlier estimate $|\Delta M_n| \le 2a_{\epsilon_n}$, if $\tau$ is predictable then using \eqref{eq:pp-jump},
$$|\,^p(\Delta[M_n]))(\tau)| \le E(|\Delta [M_n]|) \le 4a_{\epsilon_n}^2.$$
Since $A_n$ is predictable, by [I.2.24] there is a sequence $(T_m)$ of predictable times such that $\{t\colon \Delta A_n(t)\ne 0\} \subset \bigcup_m T_m$. Using the above display with $\tau=T_m$, it follows that a.s.~$|\Delta A_n| \le 4a_{\epsilon_n}^2$. Since $a_{\epsilon_n} \to 0$ as $n\to\infty$, condition (iii) holds.
\end{proof}

\end{document}